\definecolor{labelkey}{rgb}{0,0.08,0.45}
\definecolor{refkey}{rgb}{0,0.6,0.0}
\definecolor{Brown}{rgb}{0.45,0.0,0.05}
\definecolor{lime}{rgb}{0.00,0.8,0.0}
\definecolor{lblue}{rgb}{0.5,0.5,0.99}
\definecolor{lblue}{rgb}{0.8,0.85,1.00}
\definecolor{anotherblue}{rgb}{.8, .8,1}
\definecolor{violet}{rgb}{0.9,0.6,0.9}
\definecolor{greenyellow}{rgb}{0.53,0.99,0.18}
\definecolor{Lyellow}{rgb}{0.87,0.87,0.87}
\definecolor{Lgray}{rgb}{0.92,0.92,0.92}
\definecolor{Mgray}{rgb}{0.5,0.5,0.5}
\definecolor{Gold}{rgb}{0.99,0.84,0.0}
\newcommand{\nnn}{\ensuremath{{n\in{\mathbb N}}}}
\newcommand{\kkk}{\ensuremath{{k\in{\mathbb N}}}}
\newcommand{\thalb}{\ensuremath{\tfrac{1}{2}}}
\newcommand{\menge}[2]{\big\{{#1}~\big |~{#2}\big\}}
\newcommand{\mmenge}[2]{\bigg\{{#1}~\bigg |~{#2}\bigg\}}
\newcommand{\Menge}[2]{\left\{{#1}~\Big|~{#2}\right\}}
\newcommand{\fenv}[1]%
{\ensuremath{\,\overrightarrow{\operatorname{env}}_{#1}}}
\newcommand{\benv}[1]%
{\ensuremath{\,\overleftarrow{\operatorname{env}}_{#1}}}
\newcommand{\scal}[2]{\left\langle{#1},{#2}  \right\rangle}
\newcommand{\zeroun}{\ensuremath{\left]0,1\right[}}
\newcommand{\RR}{\ensuremath{\mathbb R}}
\newcommand{\NN}{\ensuremath{\mathbb N}}
\newcommand{\ZZ}{\ensuremath{\mathbb Z}}
\newcommand{\RP}{\ensuremath{\mathbb R}_+}
\newcommand{\RM}{\ensuremath{\mathbb R}_-}
\newcommand{\RPP}{\ensuremath{\mathbb R}_{++}}
\newcommand{\RMM}{\ensuremath{\mathbb R}_{--}}
\newcommand{\ball}{\ensuremath{\mathrm{ball}}}
\newcommand{\reli}{\ensuremath{\operatorname{ri}}}
\newcommand{\inte}{\ensuremath{\operatorname{int}}}
\newcommand{\bd}{\ensuremath{\operatorname{bdry}}}
\newcommand{\cone}{\ensuremath{\operatorname{cone}}}
\newcommand{\aff}{\ensuremath{\operatorname{aff}}}
\newcommand{\Id}{\ensuremath{\operatorname{Id}}}
\newcommand{\pinf}{\ensuremath{+\infty}}
\newcommand{\bx}{\ensuremath{\mathbf{x}}}
\newcommand{\blambda}{\ensuremath{{\boldsymbol{\lambda}}}}
\newcommand{\bmu}{\ensuremath{{\boldsymbol{\mu}}}}
\newcommand{\by}{\ensuremath{\mathbf{y}}}
\newcommand{\wt}[1]{\widetilde{#1}}
\newcommand{\nc}[2]{N^{#2}_{#1}}
\newcommand{\pn}[2]{\widehat{N}^{#2}_{#1}} 
\newcommand{\rate}{\ensuremath{\hat{\rho}}}
\def\ve{\varepsilon}
\def\dd{\delta}
\def\dn{\downarrow}
\def\rts{\kappa}
\def\rts{\hat{\kappa}}
\def\Rts{\eta}
\newtheorem{theorem}{Theorem}[section]
\newtheorem{lemma}[theorem]{Lemma}
\newtheorem{corollary}[theorem]{Corollary}
\newtheorem{proposition}[theorem]{Proposition}
\newtheorem{definition}[theorem]{Definition}
\theoremstyle{plain}{\theorembodyfont{\rmfamily}
}
\theoremstyle{plain}{\theorembodyfont{\rmfamily}
}
\theoremstyle{plain}{\theorembodyfont{\rmfamily}
}
\theoremstyle{plain}{\theorembodyfont{\rmfamily}
\newtheorem{example}[theorem]{Example}}
\theoremstyle{plain}{\theorembodyfont{\rmfamily}
\newtheorem{remark}[theorem]{Remark}}
\newcommand{\boxedeqn}[1]{%
    \[\fbox{%
        \addtolength{\linewidth}{-2\fboxsep}%
        \addtolength{\linewidth}{-2\fboxrule}%
        \begin{minipage}{\linewidth}%
        \begin{equation}#1\\[+4mm]\end{equation}%
        \end{minipage}%
      }\]%
  }
\def\doi{DOI}
\newcommand{\papercited}[6]
{{\sc#1}, #2, \emph{#3} {\bf #4} (#5), #6.}
\newcommand{\others}[2]
{{\sc #1}, #2}
\newcommand{\bookcited}[4]
{{\sc #1}, \emph{#2}, #3 (#4).}
\newcounter{count}
\def\firstPic{\begin{figure}[h!]
\psset{xunit=1.2cm, yunit=1.2cm}
\begin{pspicture}
(-3,-2.5)(10,6.5)
\pscustom{
\psline[linestyle=none]{<->}(10,0)(8,0)(6,-0.4)(4,0)(3,-0.2)(2,0)(1.5,-0.1)(1,0)(0.75,-0.05)(0.5,0)(0,0)(-3,0)
\gsave
\psline(0,-2)(7,-2)
\fill[fillstyle=vlines,opacity=0.1,hatchcolor=Mgray,hatchwidth=0.1pt,hatchsep=10pt]
\grestore}
\psline[linewidth=0.4pt,linecolor=red,linestyle=dashed,showpoints=true,dotsize=3pt]{-}(8,0)(4,0)(2,0)(1,0)(0.5,0)(0.25,0)(0,0)
\pswedge[linestyle=none,fillstyle=solid,fillcolor=Gold](4,0){0.8}{76.5}{102.5}
\psline[linewidth=0.7pt,linecolor=red]{<->}(4.2,1)(4,0)(3.8,1)
\rput{-12}(4,0){\psline[linewidth=0.7pt](0.2,0)(0.2,0.2)(0,0.2)}
\rput{102}(4,0){\psline[linewidth=0.7pt](0.2,0)(0.2,0.2)(0,0.2)}
\psarc{->}(4,0){1.95}{-12}{0}
\rput(5.4,-0.15){\small$w$}
\rput{41}{
\pscustom{
\psline[linestyle=none]{<->}(10,0)(8,0)(6,0.4)(4,0)(3,0.2)(2,0)(1.5,0.1)(1,0)(0.75,0.05)(0.5,0)(0,0)(-3,0)
\gsave
\psline(0,2)(7,2)
\fill[fillstyle=hlines,opacity=0.1,hatchcolor=blue,hatchwidth=0.1pt,hatchsep=10pt]
\grestore}
\psline[linewidth=0.4pt,linecolor=blue,linestyle=dashed,showpoints=true,dotsize=3pt]{-}(8,0)(4,0)(2,0)(1,0)(0.5,0)(0.25,0)(0,0)
\pswedge[linestyle=none,fillstyle=solid,fillcolor=Gold](4,0){0.8}{-102.5}{-76.5}
\psline[linewidth=0.7pt,linecolor=red]{<->}(4.2,-1)(4,0)(3.8,-1)
}

\psline[linestyle=none]{->}(0,0)(-3,0)
\rput{21}
{\psline[linewidth=0.4pt,linecolor=gray,linestyle=dashed](0,0)(8,0)}
\rput(8,3){$y=(\tan2w) x$}
\rput(7,-1.5){$A$}
\rput(2,4){$B$}
\rput(8.3,-0.25){$s_0$}
\rput(-0.2,0.2){$c$}
\rput(5,1){$N_A(s_k)$}
\rput(4,2.5){$N_B(z_k)$}
\rput(4.2,-0.25){$s_k$}
\rput(2.2,-0.25){$s_{k+1}$}
\rput(2.8,2.8){$z_k$}
\rput(1.3,1.6){$z_{k+1}$}
\rput(6,5.5){$z_0$}
\end{pspicture}
\caption{Non-superregular sets in $\RR^2$}%
\label{pic:1}
\end{figure}} 
\def\secondPic{\begin{figure}[h!]
\psset{xunit=1cm, yunit=1cm}
\begin{pspicture}
(-3,-2.5)(10,6.5)
\rput{41}{
\pscustom{
\psline[linestyle=none]{->}(10,0.4)(8,0)(6,0.4)(4,0)(3,0.2)(2,0)(1.5,0.1)(1,0)(0.75,0.05)(0.5,0)(0,0)(-3,0)
\gsave
\psline(0,2)(7,2)
\fill[fillstyle=hlines,opacity=0.1,hatchcolor=blue,hatchwidth=0.1pt,hatchsep=10pt]
\grestore}
\psline[linewidth=0.4pt,linecolor=blue,linestyle=dashed,showpoints=true,dotsize=3pt]{-}(8,0)(4,0)(2,0)(1,0)(0.5,0)(0.25,0)(0,0)
}
\pscustom{
\psline[linestyle=none]{->}(10,-0.4)(8,0)(6,-0.4)(4,0)(3,-0.2)(2,0)(1.5,-0.1)(1,0)(0.75,-0.05)(0.5,0)(0,0)(-3,0)
\gsave
\psline(0,-2)(7,-2)
\fill[fillstyle=vlines,opacity=0.1,hatchcolor=Mgray,hatchwidth=0.1pt,hatchsep=10pt]
\grestore}
\psline[linewidth=0.4pt,linecolor=red,linestyle=dashed,showpoints=true,dotsize=3pt]{-}(8,0)(4,0)(2,0)(1,0)(0.5,0)(0.25,0)(0,0)
\pspolygon[linestyle=none,fillstyle=solid,opacity=0.6,fillcolor=Gold](4,0)(3.4,3.1)(4.25,4.25)(4.95,4.62)
\psline[linewidth=0.7pt,linecolor=red]{<->}(4.4,2)(4,0)(3.6,2)
\rput{-12}(4,0){\psline[linewidth=0.7pt](0.2,0)(0.2,0.2)(0,0.2)}
\rput{102}(4,0){\psline[linewidth=0.7pt](0.2,0)(0.2,0.2)(0,0.2)}
\psarc{->}(4,0){1.95}{-12}{0}
\rput(5.4,-0.15){\small$w$}
\rput{41}{\psline[linecolor=black]{-}(8,0)(6,0.4)(4,0)
\psline[linestyle=none]{->}(0,0)(-3,0)}
\rput(7,-1.5){$A$}
\rput(2,4){$B$}
\rput(8.3,-0.25){$s_{k-1}$}
\rput(-0.2,0.2){$c$}
\rput(4,2.5){$W$}
\rput(4.2,-0.25){$s_k$}
\rput(2.2,-0.25){$s_{k+1}$}
\rput(2.8,2.8){$z_k$}
\rput(1.3,1.6){$z_{k+1}$}
\rput(5.8,5.6){$z_{k-1}$}
\end{pspicture}
\caption{Inverse projections of $s_k$}
\label{pic:2}
\end{figure}}  
\def\thirdPic{\begin{figure}[h!]
\psset{xunit=1cm, yunit=1cm}
\begin{pspicture}
(-3,-2.5)(10,6.5)
\rput{41}{
\pscustom{
\psline[linestyle=none]{->}(10,0.4)(8,0)(6,0.4)(4,0)(3,0.2)(2,0)(1.5,0.1)(1,0)(0.75,0.05)(0.5,0)(0,0)(-3,0)
\gsave
\psline(0,2)(7,2)
\fill[fillstyle=hlines,opacity=0.1,hatchcolor=blue,hatchwidth=0.1pt,hatchsep=10pt]
\grestore}
\psline[linewidth=0.4pt,linecolor=blue,linestyle=dashed,showpoints=true,dotsize=3pt]{-}(8,0)(4,0)(2,0)(1,0)(0.5,0)(0.25,0)(0,0)}
\pscustom{
\psline[linestyle=none]{->}(10,-0.4)(8,0)(6,-0.4)(4,0)(3,-0.2)(2,0)(1.5,-0.1)(1,0)(0.75,-0.05)(0.5,0)(0,0)(-3,0)
\gsave
\psline(0,-2)(7,-2)
\fill[fillstyle=vlines,opacity=0.1,hatchcolor=Mgray,hatchwidth=0.1pt,hatchsep=10pt]
\grestore}
\psline[linewidth=0.4pt,linecolor=red,linestyle=dashed,showpoints=true,dotsize=3pt]{-}(8,0)(4,0)(2,0)(1,0)(0.5,0)(0.25,0)(0,0)
\psline[linewidth=0.4pt,linecolor=purple,linestyle=dashed,showpoints=false]{-}(8,0)(6.05,5.25)(4,0)
\psline[linewidth=0.4pt,linecolor=purple,linestyle=dashed,showpoints=false]{-}(6.05,5.25)(6,-0.4)
\psline[linewidth=0.6pt,linecolor=red,linestyle=dashed,showpoints=false]{-}(6.05,5.25)(7,-0.2)
\rput{41}{\psline[linestyle=none]{->}(0,0)(-3,0)}
\rput(7,-1.5){$A$}
\rput(2,4){$B$}
\rput(8.3,-0.25){$s_{k}$}
\rput(-0.2,0.2){$c$}
\rput(4.2,-0.25){$s_{k+1}$}
\rput(2.7,2.8){$z_{k+1}$}
\rput(5.8,5.6){$z_k$}
\rput(6,-0.6){$s$}
\rput(7,-0.45){$h$}
\rput(5,-0.45){$h'$}
\psdots[dotsize=2.5pt](7,-0.2)(5,-0.2)
\end{pspicture}
\caption{Projections of $z_k$ on $A$}\label{pic:3}
\end{figure}} 
\def\MMARP{\begin{figure}[h!]
\begin{center}
\psset{xunit=1.4cm, yunit=1.4cm}
\begin{pspicture}
(-3,-0.7)(6,1.7)
\psline[linewidth=1pt,linestyle=solid,showpoints=true,dotstyle=*,dotsize=4pt]{-}(-3,0)(-2.5,0)(-2,0)(-1,0)(0,0)(1,0)(2,0)(6,0)
\psellipticarc[arrowsize=.2,linestyle=dashed,linewidth=0.7pt,linecolor=red]{<-}(1,-2)(2.24,2.24){65}{115}
\psellipticarc[arrowsize=.2,linestyle=dashed,linewidth=0.7pt,linecolor=red]{<-}(4,-4.6)(5.02,5.02){69}{111}
\psellipticarc[arrowsize=.2,linestyle=dashed,linewidth=0.7pt,linecolor=red]{->}(4,-1.7)(2.62,2.62){43}{137}
\psellipticarc[arrowsize=.2,linestyle=dashed,linewidth=0.7pt,linecolor=red]{<-}(4,-0.72)(2.13,2.13){23}{157}
\rput(4,1.05){$\cdots$}
\rput(-3,0.3){$-3$}
\rput(-2,0.3){$-2$}
\rput(-1,0.3){$-1$}
\rput(0,0.3){$0$}
\rput(1.9,0.3){$2$}
\rput(6.1,0.3){$6$}
\rput(4,1.7){\red{MAP}}
\rput(-1,-0.8){\blue{$\tfrac{1}{2}$-MARP}}
\rput(-2.75,-0.3){$\cdots$}
\psellipticarc[arrowsize=.2,linestyle=dashed,linewidth=0.7pt,linecolor=blue]{->}(0.5,0.375)(0.625,0.625){-140}{-40}
\psellipticarc[arrowsize=.2,linestyle=dashed,linewidth=0.7pt,linecolor=blue]{<-}(0,0.75)(1.25,1.25){-140}{-40}
\psellipticarc[arrowsize=.2,linestyle=dashed,linewidth=0.7pt,linecolor=blue]{<-}(-1.5,0.182)(0.532,0.532){-155}{-25}
\psellipticarc[arrowsize=.2,linestyle=dashed,linewidth=0.7pt,linecolor=blue]{<-}(-2.25,0.06)(0.26,0.26){-160}{-20}
\end{pspicture}
\end{center}
\caption{MAP vs MARP}
\label{pic:4}
\end{figure}}
\begin{document}

\title{\textrm{ The Method of Alternating Relaxed Projections\\
for two nonconvex sets}}

\author{
Heinz H.\ Bauschke\thanks{Mathematics, University of British
Columbia, Kelowna, B.C.\ V1V~1V7, Canada. E-mail:
\texttt{heinz.bauschke@ubc.ca}.}, Hung M.\
Phan\thanks{Mathematics and Statistics, University of Victoria, PO Box 3060 STN CSC, Victoria, B.C.\ V8W~3R4, Canada. E-mail:  \texttt{hphan@uvic.ca}.}, ~and
Xianfu\ Wang\thanks{Mathematics, University of British Columbia,
Kelowna, B.C.\ V1V~1V7, Canada. E-mail:
\texttt{shawn.wang@ubc.ca}.}}

\date{May 18, 2013}

\maketitle \thispagestyle{fancy}

\bigskip

\begin{center}
\em Dedicated to Boris Mordukhovich on the occasion of his 65th Birthday
\end{center}

\bigskip

\begin{abstract} \noindent
The Method of Alternating Projections (MAP),
a classical algorithm for solving
feasibility problems, has recently been intensely studied
for nonconvex sets.
However, intrinsically available are only local convergence results:
convergence occurs if the starting point is not too far away from solutions to avoid
getting trapped in certain regions.
Instead of taking full projection steps, it can be advantageous to
underrelax, i.e., to move only part way towards the constraint
set, in order to enlarge the regions of convergence.

In this paper, we thus systematically study the
Method of Alternating Relaxed Projections (MARP)
for two (possibly nonconvex) sets.
Complementing our recent work on MAP,
we establish local linear convergence
results for the MARP.
Several examples illustrate our analysis.
\end{abstract}

{\small \noindent {\bfseries 2010 Mathematics Subject
Classification:} 
Primary 65K05;
Secondary 47N10, 49J52, 49M20, 65K10, 90C25, 90C26.

\noindent {\bfseries Keywords:}
Feasibility problem, 
linear convergence,
method of alternating projections,
method of alternating relaxed projections,
normal cone,
projection operator.
}

\section{Introduction}

\label{sec:intro}

We assume throughout this paper that
\boxedeqn{
\text{$X$ is a Euclidean space}
}
with inner product $\scal{\cdot}{\cdot}$ and associated norm
$\|\cdot\|$ and that
\boxedeqn{
\label{e:AandB}
\text{$A$ and $B$ are nonempty closed subsets of $X$.}
}
Our aim is to solve the feasibility problem
\begin{equation}
\text{find $x\in A\cap B$.}
\end{equation}
(We do not \emph{a priori} assume that $A\cap B\neq\varnothing$.)
We assume that it is possible to evaluate the \emph{projection
operators} (nearest point mappings) $P_A$ and $P_B$ associated with the constraints sets
$A$ and $B$ respectively. The operators $P_A$ and $P_B$ are
generally set-valued; they are single-valued only in the convex
case. The celebrated \emph{Method of Alternating Projections
(MAP)}, whose origins can be traced back to von Neumann \cite{vN}
and Wiener \cite{Wiener},
with starting point $b_{-1}\in X$ generates sequences
according to the update rule\footnote{We follow a common but
convenient abuse of notation and write $a_n=P_Ab_{n-1}$ etc.\ if
the set of nearest points is a singleton.}
\begin{equation}
(\forall\nnn)\quad
a_{n}\in P_Ab_{n-1}
\quad\text{and}\quad
b_{n} \in P_Ba_n.
\end{equation}
If $A$ and $B$ are convex, then this method is well understood;
see, e.g.,
\cite{BBR,BBvN,BC2011,BR,Cegielski,CenZen,Deutsch,DHp1,DHp2,DHp3}
and the references therein for extensions and variants.
The convergence theory for the MAP and related methods
is much more delicate in the absence of convexity;
see, e.g., \cite{CombT,LLM,Luke,zero,one} and
the references therein.

Simple examples can be constructed to show that in general one
cannot expect global convergence of the MAP when $A\cap
B\neq\varnothing$:

\begin{example}[unrelaxed MAP]
\label{ex:130430a}
Suppose that $X=\RR$, that
$A=\{-3,2\}$ and that $B=\{-3,6\}$.
Then $A\cap B=\{-3\}\neq\varnothing$.
Now set $b_{-1}:=0$.
Then $a_0 := P_Ab_{-1}=P_A0 = 2$ (since $|2-0|=2<3=|-3-0|$)
and $b_0 := P_Ba_0 = P_B2 = 6$
(since $|6-2|=4<5=|-3-2|$) and clearly $a_1 := P_Ab_0 = 2$.
It follows that
\begin{equation}
\text{$(\forall\nnn)$ $a_n=2$ and $b_n=6$.}
\end{equation}
Thus, the sequences generated by the MAP do not converge to
a point in $A\cap B$ (see Figure~\ref{pic:4}).
\end{example}

To improve this situation, we study in this paper the
\emph{Method of Alternating Relaxed Projections}, where
the unrelaxed projection steps are replaced by underrelaxed
versions; e.g., the projection operators $P_A$ and $P_B$ may be
replaced by $(1-\lambda)\Id+\lambda P_A$ and
$(1-\mu)\Id+\mu P_B$, where $\lambda$ and $\mu$ belong
to $\left]0,1\right]$.
In the convex case, there are several pertinent references
including \cite{BBR,BCC,BR,Comb04,GK,GR,GPR,WangBau}.

The idea of \emph{regularizing} operators is of course not new;
MARP can be seen as regularizing the straight projection
operators. To demonstrate the potential of this approach,
let us revisit Example~\ref{ex:130430a}:

\begin{example}[MARP for Example~\ref{ex:130430a}]
\label{ex:130430b}
Let $X$, $A$, $B$, and $b_{-1}$ be as in
Example~\ref{ex:130430a}.
Rather than iterating $P_A$ and $P_B$, we now iterate
$\thalb\Id + \thalb P_A$ and $\thalb\Id +\thalb P_B$.
Then
$a_0 = (\thalb\Id + \thalb P_A)b_{-1} = \thalb b_{-1}+\thalb P_Ab_{-1}
=\thalb 0 + \thalb 2 = 1$,
$b_0 = ( \thalb\Id + \thalb P_B)a_0 = \thalb 1 + \thalb (-3) =
-2$,
$a_1 = (\thalb\Id + \thalb P_A)b_{0} =
\thalb(-2)+\thalb(-3)=-5/2$, $b_1 = \cdots = -11/4$,
$a_2 = -23/8$, $b_2 = -47/16$, \ldots
(see Figure~\ref{pic:4}),
and the sequences generated converge\footnote{This will follow
from Example~\ref{ex:130430c} below.} to $-3$, the unique
point in $A\cap B$, as desired.
\end{example}

\MMARP

{\em The goal of this paper is to systematically study the MARP
and to provide conditions sufficient for convergence.}

The tools used are from variational analysis; we extend
techniques recently introduced in \cite{zero,one}.

Our main results are the following:
\begin{itemize}
\item Theorem~\ref{t:geo} is a powerful abstract linear
convergence result that is applicable in particular to the MARP;
\item Theorem~\ref{t:loc-viaCQ} provides a local linear
convergence result for the MARP in the presence of a CQ
condition;
\item Theorem~\ref{t:main2} guarantees local linear convergence
of the MARP under some regularity assumptions.
\end{itemize}

The paper is organized as follows:
After reviewing auxiliary notions in Section~\ref{sec:aux},
we introduce the MARP in Section~\ref{sec:marp} and obtain
some basic properties. Abstract linear convergence results are
presented in Section~\ref{sec:abstract}.
Local linear convergence results based on CQ conditions and on
regularity are
provided in Sections~\ref{sec:CQ} and \ref{sec:regu},
respectively.
In Section~\ref{sec:vanish}, we discuss linearly vanishing
relaxation parameters.
Various examples illustrating the general theory are constructed
in Sections~\ref{sec:examples} and \ref{sec:doubly}.

We conclude this section with some notational comments.
We write $\RP=\menge{x\in\RR}{x\geq 0}$,
$\ZZ = \{0,\pm 1,\pm 2,\ldots\}$, and $\NN=\ZZ\cap\RP$.
The \emph{distance function} is $d_A\colon
x\mapsto \inf_{a\in A}\|x-a\|$ and
the (generally set-valued) \emph{projection operator}
is $P_A\colon x\mapsto\menge{a\in A}{\|x-a\|=d_A(x)}$.
Given a subset $S$ of $X$, we write
$\inte S$, $\reli S$, and $\overline{S}$
for the interior, the relative interior and the closure of $S$,
respectively.
If $u$ and $v$ are points in $X$, we write
$[u,v] = \menge{(1-\lambda)u+\lambda v}{0\leq \lambda \leq 1}$,
$\left]u,v\right] =  \menge{(1-\lambda)u+\lambda v}{0<\lambda
\leq 1}$, and similarly for
$\left[u,v\right[$ and $\left]u,v\right[$.
For notation not explicitly stated in this paper, and background
material in convex and variational analysis,
we refer the reader to \cite{BC2011,BorVanBook,Boris1,Rock70,Rock98,Zalinescu}.

\section{Auxiliary Notions}

\label{sec:aux}

In this section, we collect several technical definitions
for future use. For further results and comments,
see \cite{zero,one} and the references therein.

\begin{definition}[restricted normal cones]
{\rm (See \cite[Definition~2.1]{zero}.)}
Let $a\in A$.
\begin{enumerate}
\item
\label{d:pnB} The \emph{$B$-restricted proximal normal cone} of $A$
at $a$ is
\begin{equation}\label{e:pnB}
\pn{A}{B}(a):= \cone\Big(\big(B\cap P_A^{-1}a\big)-a\Big) =
\cone\Big(\big(B-a\big)\cap \big(P_A^{-1}a-a\big)\Big).
\end{equation}
\item\label{d:nc}
The \emph{$B$-restricted normal cone} $\nc{A}{B}(a)$ is implicitly
defined by $u\in\nc{A}{B}(a)$ if and only if there exist sequences
$(a_n)_\nnn$ in $A$ and $(u_n)_\nnn$ in $\pn{A}{B}(a_n)$ such that
$a_n\to a$ and $u_n\to u$.
\end{enumerate}
\end{definition}

\begin{definition}[regularity of sets]
{\rm (See \cite[Definition~8.1]{zero}.)}
\label{d:reg}
Let $c\in B$, $\ve\geq 0$, and $\dd>0$.
Then $B$ is \emph{$(A,\ve,\dd)$-regular at $c$} if
\begin{equation}
\label{e:dereg} \left.
\begin{array}{c}
(y,b)\in B\times B,\\
\|y-c\| \leq \dd,\|b-c\|\leq \dd,\\
u\in \pn{B}{A}(b)
\end{array}
\right\}\quad\Rightarrow\quad \scal{u}{y-b}\leq
\ve\|u\|\cdot\|y-b\|.
\end{equation}
The set $B$ is called $A$-\emph{superregular} at $c\in B$ if for
every $\ve>0$ there exists $\dd>0$ such that $B$ is
$(A,\ve,\dd)$-regular at $c$. When $A=X$, we say
``\emph{$B$ is $(\ve,\dd)$-regular}'' or
``\emph{$B$ is superregular}'', i.e., the prefix
``$X$-'' is omitted.
\end{definition}



\begin{definition}[linear convergence]
\label{d:linCon}
Let $(x_n)_\nnn$ be a sequence in $X$, let $c\in X$,
let $\alpha\in\zeroun$.
Then
\emph{$(x_n)_\nnn$ converges to $c$ linearly with rate $\alpha$}
if there exists $M\in\RP$ such
that\footnote{Note that one may alternatively
require that \eqref{e:130423a} only holds eventually
at the expense of possibly enlarging $M$; see, e.g., \cite[Remark~3.7]{one}.}
\begin{equation}
\label{e:130423a}
(\forall\nnn)\quad \|x_n-c\|\leq M\alpha^n.
\end{equation}
\end{definition}

\begin{definition}[CQ-number]
{\rm (See \cite[Definition~6.1]{zero}.)}
\label{d:CQn}
Let $\wt{A}$ and $\wt{B}$ be nonempty subsets of $X$,
let $c\in X$, and let $\dd\in\RPP$.
The \emph{CQ-number} at $c$ associated with
$(A,\wt{A},B,\wt{B})$ and $\dd$ is
\begin{equation}
\label{e:CQn}
\theta_\dd:=\theta_\dd\big(A,\wt{A},B,\wt{B}\big)
:=\sup\mmenge{\scal{u}{v}}
{\begin{aligned}
&u\in\pn{A}{\wt{B}}(a),v\in-\pn{B}{\wt{A}}(b),\|u\|\leq 1, \|v\|\leq 1,\\
&\|a-c\|\leq\dd,\|b-c\|\leq\dd.
\end{aligned}},
\end{equation}
and the \emph{limiting CQ-number} at $c$ associated with
$(A,\wt{A},B,\wt{B})$ is
\begin{equation}\label{e:lCQn}
\overline\theta:=\overline\theta\big(A,\wt{A},B,\wt{B}\big)
:=\lim_{\dd\dn0}\theta_\dd\big(A,\wt{A},B,\wt{B}\big).
\end{equation}
\end{definition}

\begin{definition}[CQ condition]
{\rm (See \cite[Definition~6.6]{zero}.)}
\label{d:CQ}
Let $\wt{A}$ and $\wt{B}$ be nonempty subsets of $X$,
and let $c\in X$.
Then the \emph{$(A,\wt{A},B,\wt{B})$-CQ condition} holds at $c$ if
\begin{equation}\label{e:CQ}
\nc{A}{\wt{B}}(c) \cap\big(-\nc{B}{\wt{A}}(c)\big)\subseteq\{0\}.
\end{equation}
\end{definition}
We recall the following equivalence from \cite[Theorem~6.8]{zero}:
\begin{equation}\label{e:CQ-CQn}
\nc{A}{\wt{B}}(c) \cap\big(-\nc{B}{\wt{A}}(c)\big)\subseteq\{0\}\ \Leftrightarrow\ \overline\theta(A,\wt{A},B,\wt{B})<1.
\end{equation}

\section{MARP: Basic Properties}

\label{sec:marp}

\begin{definition}
Let $y\in X$ and let $\lambda\in\left]0,1\right]$.
Then the vectors in the set
\begin{equation}
(1-\lambda)y + \lambda P_Ay = \menge{(1-\lambda)y+\lambda a}{a\in P_Ay}
\end{equation}
are called $\lambda$-relaxed projections of $y$ on $A$.
\end{definition}
Note that the $1$-relaxed projections are precisely the original
(unrelaxed) projections.
From now on, we assume that
\boxedeqn{
\label{e:blbm}
\text{$\blambda = (\lambda_n)_\nnn$ and $\bmu = (\mu_n)_\nnn$ are sequences in
$\left]0,1\right]$, \; and $\alpha_0:=\max\{\lambda_0,\mu_0\}$.}
}

\begin{definition}[Method of Alternating Relaxed Projections (MARP)]
\label{d:MARP}
Let $y_{-1}\in X$ be the starting point.
The method of alternating $(\blambda,\bmu)$-relaxed projections between $A$ and $B$
(the $(\blambda,\bmu)$-MARP or just MARP in short)
generates sequences $\bx := (x_n)_\nnn$
and $\by := (y_n)_\nnn$ as follows:
\begin{equation}
\begin{aligned}
(\forall\nnn)\quad y_{n-1}&\ \mapsto\ a_n\in P_A y_{n-1}
\ \mapsto\ x_n:=(1-\lambda_n)y_{n-1}+\lambda_n a_n\\
&\ \mapsto\ b_n\in P_B x_n
\ \mapsto\ y_n:=(1-\mu_n)x_{n}+\mu_n b_n \mapsto \cdots.
\end{aligned}
\end{equation}
We call $(\bx,\by)$ also $(\blambda,\bmu)$-MARP or simply MARP sequences.
\end{definition}
When $(\forall\nnn)$ $\lambda_n=\mu_n=1$,
then $(x_n)_\nnn=(a_n)_\nnn$ and $(y_n)_\nnn= (b_n)_\nnn$,
and the MARP reduces to the classic method of alternating
projections (MAP).

Unless specified otherwise,
we assume for the remainder of this paper that
\boxedeqn{
\label{e:bxby}
\text{$\big(\bx = (x_n)_\nnn,
\by = (y_n)_\nnn\big)$ are $(\blambda,\bmu)$-MARP sequences with
starting point $y_{-1}$.}
}

The following simple result turns out to be quite useful.

\begin{proposition}\label{p:01}
Let $y\in X$, $a\in P_A y$, $\lambda\in\left]0,1\right]$,
and set $x:=(1-\lambda)y+\lambda a$.
Then the following hold:
\begin{enumerate}
\item\label{p:01i} $P_A(x)=a$.
\item\label{p:01ii} $x-y=\lambda(a-y)$ and thus $\|x-y\|=\lambda\|a-y\|=\lambda d_A(y)$.
\item\label{p:01iii} $\lambda(x-a)=(1-\lambda)(y-x)$.
\end{enumerate}
\end{proposition}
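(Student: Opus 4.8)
The plan is to dispose of parts~\ref{p:01ii} and~\ref{p:01iii} by direct computation and to concentrate the real work on~\ref{p:01i}. For~\ref{p:01ii}, I would substitute $x=(1-\lambda)y+\lambda a$ to obtain $x-y=(1-\lambda)y+\lambda a-y=\lambda(a-y)$; taking norms and using $\lambda>0$, together with $a\in P_Ay$ (so that $\|a-y\|=d_A(y)$), then yields the stated chain $\|x-y\|=\lambda\|a-y\|=\lambda d_A(y)$. For~\ref{p:01iii}, expanding gives $x-a=(1-\lambda)(y-a)$ and $y-x=\lambda(y-a)$, so both $\lambda(x-a)$ and $(1-\lambda)(y-x)$ equal $\lambda(1-\lambda)(y-a)$ and hence coincide. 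These are one-line verifications.

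The heart of the matter is~\ref{p:01i}, the assertion that $a$ is the \emph{unique} nearest point of $A$ to the relaxed iterate $x$. The only information available about $a$ is that it minimizes the distance to $y$, so I would first distil this into the proximal normal inequality: for every $a'\in A$ we have $\|y-a'\|\geq\|y-a\|$, and expanding $\|y-a'\|^2=\|y-a\|^2+2\scal{y-a}{a-a'}+\|a-a'\|^2$ rearranges to $2\scal{y-a}{a'-a}\leq\|a'-a\|^2$.

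Next I would estimate $\|x-a'\|^2$ for an arbitrary $a'\in A$. Using $x-a=(1-\lambda)(y-a)$ from~\ref{p:01iii} and expanding $\|x-a'\|^2=\|x-a\|^2-2\scal{x-a}{a'-a}+\|a'-a\|^2$, I would substitute $\scal{x-a}{a'-a}=(1-\lambda)\scal{y-a}{a'-a}$ and apply the proximal normal inequality to the cross term. The resulting $-(1-\lambda)\|a'-a\|^2$ contribution cancels against part of $\|a'-a\|^2$, leaving $\|x-a'\|^2\geq\|x-a\|^2+\lambda\|a'-a\|^2$. Since $\lambda>0$, this simultaneously shows that $a$ is a nearest point to $x$ and that any competing nearest point $a'$ must satisfy $\|a'-a\|=0$; hence $P_Ax=\{a\}$, i.e.\ $P_Ax=a$.

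The only genuine obstacle is organizing part~\ref{p:01i} so that the strictly positive slack $\lambda\|a'-a\|^2$ emerges cleanly: it is precisely this slack that upgrades ``$a$ is \emph{a} nearest point'' to ``$a$ is \emph{the} nearest point,'' and it is exactly where the hypothesis $\lambda\in\left]0,1\right]$ (specifically $\lambda>0$) is consumed. Everything else reduces to bookkeeping with the identity $x-a=(1-\lambda)(y-a)$.
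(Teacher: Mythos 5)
Your treatment of parts \ref{p:01ii} and \ref{p:01iii} matches the paper's (both are one-line algebraic identities). For part \ref{p:01i}, however, you take a genuinely different and equally valid route. The paper argues geometrically: for $a'\in A\smallsetminus\{a\}$ it splits into the case $x\in[y,a']$ (where $y,a,a'$ lie on one ray, forcing $\|y-a'\|>\|y-a\|$ and hence $\|x-a'\|=\|y-a'\|-\|y-x\|>\|x-a\|$) and the case $x\notin[y,a']$ (where the strict triangle inequality gives $\|x-a'\|>\|y-a'\|-\|y-x\|\geq\|x-a\|$). You instead expand squared norms: from $\|y-a'\|^2\geq\|y-a\|^2$ you extract the proximal normal inequality $2\scal{y-a}{a'-a}\leq\|a'-a\|^2$, substitute $x-a=(1-\lambda)(y-a)$ into $\|x-a'\|^2=\|x-a\|^2-2\scal{x-a}{a'-a}+\|a'-a\|^2$, and conclude $\|x-a'\|^2\geq\|x-a\|^2+\lambda\|a'-a\|^2$. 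This computation is correct, and it buys a quantitative strengthening (a strong-uniqueness estimate with slack $\lambda\|a'-a\|^2$) that the paper's argument does not state. One small remark: you attribute the consumption of the hypothesis $\lambda\in\left]0,1\right]$ entirely to $\lambda>0$, but the upper bound $\lambda\leq 1$ is also used, namely when you multiply the proximal normal inequality by $1-\lambda\geq 0$ to control the cross term without flipping its direction; this is implicit in your computation and not a gap, just worth flagging. The paper's case analysis, by contrast, leans only on one-dimensional geometry along the ray from $y$ through $a$ and on when the triangle inequality is an equality.
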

\begin{proof}
\ref{p:01i}:
Suppose that $a'\in A\smallsetminus\{a\}$.
\emph{Case~1:} $x\in [y,a']$.
Then $\|y-a'\|>\|y-a\|$ because $y,a,a'$ lie on the same ray.
So
\begin{equation}
\|x-a'\|=\|y-a'\|-\|y-x\|>\|y-a\|-\|y-x\|=\|x-a\|.
\end{equation}
\emph{Case~2:} $x\notin [y,a']$. Then
\begin{equation}
\|x-a'\|>\|y-a'\|-\|y-x\|\geq\|y-a\|-\|y-x\|=\|x-a\|.
\end{equation}
In either case, $\|x-a'\|>\|x-a\|$ and therefore
$a=P_A(x)$.

\ref{p:01ii}: Indeed, $x-y=\lambda(a-y)$ $\Leftrightarrow$
$x=(1-\lambda)y+\lambda a$.

\ref{p:01iii}: We have:
$\lambda(x-a)=(1-\lambda)(y-x)$ $\Leftrightarrow$
$\lambda(x-a)=(1-\lambda)y-(1-\lambda)x$ $\Leftrightarrow$
$-\lambda a =(1-\lambda)y - x$ $\Leftrightarrow$
$x=(1-\lambda)y+\lambda a$.
\end{proof}


\begin{definition}[projection absorbing]\label{d:prjab}
Let $S$ be a nonempty subset of $X$.
Then $S$ is $A$-{\em projection absorbing} (or
\emph{projection absorbing with respect to $A$}), if
\begin{equation}\label{e:prjab}
(\forall s\in S)(\forall a\in P_A s)\quad [s,a]\subseteq S.
\end{equation}
\end{definition}

\begin{remark}
Let $S$ be a subset of $X$ that is $A$-projection absorbing.
\begin{enumerate}
\item
Clearly, $X$ is $A$-projection absorbing.
\item If $S$ is $B$-projection absorbing, then $S$ is also
$A\cup B$-projection absorbing because
$(\forall s\in S)$ $P_{A\cup B}(s)\subseteq P_{A}s \cup P_Bs$.
The opposite implication is not necessarily true:
for example, if $X=\RR^2$,
$S=A=\RR\times\{1\}$,
and $B=\RR\times \{0\}$, then
$S$ is $A\cup B$-projection absorbing but not $B$-projection absorbing.
\item If $S$ is convex and $P_A(S)\subseteq S$, then $S$ is $A$-projection absorbing.
\item On the other hand,
if $A = \ball(0;1)$ and $S = \overline{X\smallsetminus A}$, then
$S$ is not convex but still $A$-projection absorbing.
\end{enumerate}
\end{remark}

The notion of a projection absorbing set is important because of the
following result pertaining to the orbit of the MARP.

\begin{proposition}\label{p:1216a}
Let $S$ be a subset of $X$ that is both
$A$-projection absorbing and $B$-projection absorbing.
If $y_{-1}\in S$, then $(x_n)_\nnn$ and $(y_n)_\nnn$ lie in $S$.
\end{proposition}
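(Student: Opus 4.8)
The plan is to argue by induction on $n$, tracking one full MARP cycle at a time and using the projection‑absorbing hypotheses once for $A$ and once for $B$. The induction hypothesis I would carry is simply that $y_{n-1}\in S$; the base case is the assumption $y_{-1}\in S$ for $n=0$.

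For the inductive step, assume $y_{n-1}\in S$ and recall from Definition~\ref{d:MARP} that $a_n\in P_A y_{n-1}$ and $x_n=(1-\lambda_n)y_{n-1}+\lambda_n a_n$. Because $S$ is $A$-projection absorbing and $y_{n-1}\in S$, Definition~\ref{d:prjab} gives $[y_{n-1},a_n]\subseteq S$. Since $\lambda_n\in\left]0,1\right]$, the point $x_n$ is a convex combination of $y_{n-1}$ and $a_n$ with coefficient $\lambda_n\in[0,1]$, so $x_n\in[y_{n-1},a_n]\subseteq S$. This shows $x_n\in S$.

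I would then repeat the identical argument with the roles of $A$ and $B$ interchanged, now using $x_n\in S$ as the input. Since $b_n\in P_B x_n$ and $S$ is $B$-projection absorbing, we get $[x_n,b_n]\subseteq S$; and since $\mu_n\in\left]0,1\right]$, the point $y_n=(1-\mu_n)x_n+\mu_n b_n$ lies in $[x_n,b_n]\subseteq S$, whence $y_n\in S$. This closes the induction, proving that $x_n\in S$ and $y_n\in S$ for every \nnn.

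There is no substantial obstacle here: the entire argument reduces to the observation that each relaxed projection point is, by construction, a convex combination of its source point and a full projection, hence lies on the closed segment joining them. The only point requiring a moment's care is the endpoint bookkeeping—the relaxation parameters lie in $\left]0,1\right]$ (never equal to $0$, and possibly equal to $1$), but this is harmless precisely because $[y_{n-1},a_n]$ and $[x_n,b_n]$ are the \emph{closed} segments, so every convex combination with coefficient in $[0,1]$ is captured.
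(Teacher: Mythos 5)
Your proof is correct and is precisely the induction the paper alludes to when it writes that the result ``follows readily by using mathematical induction'': each relaxed iterate lies on the closed segment joining the previous iterate to its projection, and that segment stays in $S$ by the projection-absorbing hypotheses. Nothing to add.
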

\begin{proof}
This follows readily by using mathematical induction.
\end{proof}

\begin{lemma}
\label{l:first}
Set $\beta:=\max\{d_A(y_{-1}),d_B(y_{-1})\}$.
Then the following hold:
\begin{subequations}\label{e:tMARP1}
\begin{align}
\|x_0-y_{-1}\|&=\lambda_0 d_A(y_{-1})\leq\lambda_0\beta,\label{e:130425a}\\
\max\{d_A(x_0),d_B(x_0)\}&\leq\|x_0-y_{-1}\|+\beta\leq(1+\lambda_0)\beta,\label{e:130425b}\\
\|y_0-x_0\|&=\mu_0 d_B(x_0)\leq\mu_0(1+\lambda_0)\beta, \label{e:130425c}\\
\max\{\|y_0-x_0\|,\|x_0-y_{-1}\|\}&\leq \alpha_0(1+\alpha_0)\beta.  \label{e:130425d}
\end{align}
\end{subequations}
\end{lemma}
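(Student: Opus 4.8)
The plan is to establish the four estimates in order, since each one feeds into the next. The key observation is that Lemma~\ref{l:first} is really just the case $n=0$ of the MARP update rule, so everything reduces to Proposition~\ref{p:01} together with the triangle inequality and the fact that $d_A$ and $d_B$ are $1$-Lipschitz.

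First I would prove \eqref{e:130425a}. Since $a_0\in P_A y_{-1}$ and $x_0=(1-\lambda_0)y_{-1}+\lambda_0 a_0$, Proposition~\ref{p:01}\ref{p:01ii} gives directly $\|x_0-y_{-1}\|=\lambda_0\|a_0-y_{-1}\|=\lambda_0 d_A(y_{-1})$, and then $d_A(y_{-1})\leq\beta$ by the definition of $\beta$. Next, for \eqref{e:130425b}, I would use that the distance functions are nonexpansive: $d_A(x_0)\leq d_A(y_{-1})+\|x_0-y_{-1}\|$ and likewise $d_B(x_0)\leq d_B(y_{-1})+\|x_0-y_{-1}\|$. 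Bounding each $d_A(y_{-1}),d_B(y_{-1})$ by $\beta$ and using \eqref{e:130425a} to bound $\|x_0-y_{-1}\|\leq\lambda_0\beta$ yields $\max\{d_A(x_0),d_B(x_0)\}\leq\beta+\lambda_0\beta=(1+\lambda_0)\beta$.

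For \eqref{e:130425c}, I would apply Proposition~\ref{p:01}\ref{p:01ii} a second time, now with the roles of $A$, $\lambda$, $y$, $a$, $x$ played by $B$, $\mu_0$, $x_0$, $b_0$, $y_0$: this gives $\|y_0-x_0\|=\mu_0 d_B(x_0)$, and then the bound $d_B(x_0)\leq(1+\lambda_0)\beta$ from \eqref{e:130425b} finishes the estimate. Finally, for \eqref{e:130425d}, recall $\alpha_0=\max\{\lambda_0,\mu_0\}$. Since $\lambda_0\leq\alpha_0\leq 1+\alpha_0$, the bound \eqref{e:130425a} gives $\|x_0-y_{-1}\|\leq\lambda_0\beta\leq\alpha_0(1+\alpha_0)\beta$; and since $\mu_0(1+\lambda_0)\leq\alpha_0(1+\alpha_0)$, the bound \eqref{e:130425c} gives $\|y_0-x_0\|\leq\alpha_0(1+\alpha_0)\beta$. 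Taking the maximum yields \eqref{e:130425d}.

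I do not expect any genuine obstacle here; the result is a routine chain of triangle-inequality estimates, and the only points requiring care are the correct bookkeeping of which relaxation parameter multiplies which distance, and remembering to invoke nonexpansiveness of $d_A,d_B$ (rather than any convexity) so that the argument is valid for nonconvex $A$ and $B$. The single nontrivial input is Proposition~\ref{p:01}\ref{p:01ii}, which converts the relaxed-projection step length into an exact multiple of the distance and is applied once to the $A$-step and once to the $B$-step.
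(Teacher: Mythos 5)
Your proposal is correct and follows essentially the same route as the paper: Proposition~\ref{p:01}\ref{p:01ii} for \eqref{e:130425a} and \eqref{e:130425c}, nonexpansiveness of the distance functions for \eqref{e:130425b}, and the elementary comparison $\lambda_0\leq\alpha_0$, $\mu_0(1+\lambda_0)\leq\alpha_0(1+\alpha_0)$ for \eqref{e:130425d}. The only difference is that you spell out the final maximum comparison, which the paper leaves implicit.
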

\begin{proof}
Using Proposition~\ref{p:01}\ref{p:01ii}, we have
$\|x_0-y_{-1}\| = \lambda_0d_A(y_{-1}) \leq \lambda_0\beta$.
Thus, \eqref{e:130425a} holds.
The nonexpansiveness of distance functions implies \eqref{e:130425b}.
On the one hand, using Proposition~\ref{p:01}\ref{p:01ii} again, we see that
$\|y_0-x_0\|=\mu_0 d_B(x_0)$.
On the other hand, \eqref{e:130425b} yields
$d_B(x_0)\leq (1+\lambda_0)\beta$.
Altogether, we obtain \eqref{e:130425c}.
Finally, \eqref{e:130425d} follows from \eqref{e:130425a} and
\eqref{e:130425c}.
\end{proof}

The following lemma is important for our analysis.

\begin{lemma}\label{l:MARP}
Let $\nnn$ and
let $\theta\in\left[0,1\right]$ be such that
\begin{equation}\label{e:l-ef01}
\scal{y_n-x_n}{y_{n-1}-x_n}\leq \theta\|y_n-x_n\|\cdot\|x_n-y_{n-1}\|.
\end{equation}
Then
\begin{equation}\label{e:l-ef02}
\|x_{n+1}-y_n\|\leq
\tfrac{\lambda_{n+1}}{\lambda_n}\sqrt{\lambda_n^2+(1-\lambda_n)^2+2\theta\lambda_n(1-\lambda_n)}
\cdot\max\big\{\|y_n-x_n\|,\|x_n-y_{n-1}\|\big\}.
\end{equation}
\end{lemma}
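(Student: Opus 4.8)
The plan is to reduce the quantity $\|x_{n+1}-y_n\|$ to something involving $d_A(y_n)$, and then to estimate $d_A(y_n)$ by comparing $y_n$ with the \emph{previous} projection point $a_n$ rather than with a genuine projection of $y_n$. Concretely, applying Proposition~\ref{p:01}\ref{p:01ii} to the step $y_n\mapsto a_{n+1}\mapsto x_{n+1}$ gives $\|x_{n+1}-y_n\|=\lambda_{n+1}d_A(y_n)$. Since $a_n\in A$, we have the crude but crucial bound $d_A(y_n)\leq\|y_n-a_n\|$, so it suffices to estimate $\|y_n-a_n\|$ in terms of $\|y_n-x_n\|$ and $\|x_n-y_{n-1}\|$. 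The whole lemma then hinges on controlling $\|y_n-a_n\|^2$.

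Next I would write $y_n-a_n=(y_n-x_n)+(x_n-a_n)$ and expand:
\begin{equation}
\|y_n-a_n\|^2=\|y_n-x_n\|^2+2\scal{y_n-x_n}{x_n-a_n}+\|x_n-a_n\|^2.
\end{equation}
The key algebraic input is Proposition~\ref{p:01}\ref{p:01iii} applied to the step $y_{n-1}\mapsto a_n\mapsto x_n$, which yields $\lambda_n(x_n-a_n)=(1-\lambda_n)(y_{n-1}-x_n)$, i.e.\ $x_n-a_n=\tfrac{1-\lambda_n}{\lambda_n}(y_{n-1}-x_n)$. This turns the cross term into $\scal{y_n-x_n}{x_n-a_n}=\tfrac{1-\lambda_n}{\lambda_n}\scal{y_n-x_n}{y_{n-1}-x_n}$, which is exactly the inner product controlled by hypothesis \eqref{e:l-ef01}. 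Because $\theta\geq 0$ and $\tfrac{1-\lambda_n}{\lambda_n}\geq 0$ (as $\lambda_n\in\left]0,1\right]$), the bound is preserved under multiplication, giving $\scal{y_n-x_n}{x_n-a_n}\leq\tfrac{1-\lambda_n}{\lambda_n}\,\theta\,\|y_n-x_n\|\cdot\|x_n-y_{n-1}\|$. Likewise taking norms in the same identity gives $\|x_n-a_n\|=\tfrac{1-\lambda_n}{\lambda_n}\|x_n-y_{n-1}\|$.

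Finally I would collect the three terms. Writing $p:=\|y_n-x_n\|$ and $q:=\|x_n-y_{n-1}\|$, the expansion becomes the quadratic form
\begin{equation}
\|y_n-a_n\|^2\leq p^2+2\tfrac{1-\lambda_n}{\lambda_n}\theta\,pq+\Big(\tfrac{1-\lambda_n}{\lambda_n}\Big)^2q^2,
\end{equation}
and bounding $p^2$, $q^2$, and $pq$ all by $m^2$ with $m:=\max\{p,q\}$ factors out $m^2$ and, after clearing $\lambda_n^2$, produces the radicand $\lambda_n^2+(1-\lambda_n)^2+2\theta\lambda_n(1-\lambda_n)$ over $\lambda_n^2$. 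Multiplying by $\lambda_{n+1}^2$ and taking square roots then gives exactly \eqref{e:l-ef02}. The only genuinely nonroutine step is the second one: recognizing that the cross term should be rewritten via Proposition~\ref{p:01}\ref{p:01iii} so that the hypothesis applies cleanly, and committing early to the estimate $d_A(y_n)\leq\|y_n-a_n\|$, which sidesteps any need to understand the actual projection of $y_n$. Everything after that is bookkeeping with the quadratic form and the $\max$.
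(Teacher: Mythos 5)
Your proposal is correct and follows essentially the same route as the paper's proof: both expand $\|y_n-a_n\|^2$, rewrite $x_n-a_n$ via Proposition~\ref{p:01}\ref{p:01iii} so that hypothesis \eqref{e:l-ef01} controls the cross term, and finish with $\|x_{n+1}-y_n\|=\lambda_{n+1}d_A(y_n)\leq\lambda_{n+1}\|y_n-a_n\|$. The only difference is cosmetic (you commit to the bound $d_A(y_n)\leq\|y_n-a_n\|$ at the outset rather than at the end).
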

\begin{proof}
Proposition~\ref{p:01}\ref{p:01iii} yields
\begin{equation}\label{e:l-ef4}
x_n-a_n=\tfrac{1-\lambda_n}{\lambda_n}(y_{n-1}-x_n).
\end{equation}
Combining \eqref{e:l-ef4} with assumption \eqref{e:l-ef01} we have
\begin{equation}\label{e:l-ef5}
\scal{y_n-x_n}{x_n-a_n}=\tfrac{1-\lambda_n}{\lambda_n}\scal{y_n-x_n}{y_{n-1}-x_n}
\leq \tfrac{\theta(1-\lambda_n)}{\lambda_n}\|y_n-x_n\|\cdot\|x_n-y_{n-1}\|.
\end{equation}
Substituting \eqref{e:l-ef4} and \eqref{e:l-ef5} into
\begin{equation}\label{e:l-ef2}
\|y_n-a_n\|^2=\|y_n-x_n\|^2+\|x_n-a_n\|^2 + 2\scal{y_n-x_n}{x_n-a_n}
\end{equation}
gives
\begin{equation}\label{e:l-ef3}
\|y_n-a_n\|^2
\leq\|y_n-x_n\|^2+\tfrac{(1-\lambda_n)^2}{\lambda_n^2}\|x_n-y_{n-1}\|^2
+ 2\tfrac{\theta(1-\lambda_n)}{\lambda_n}\|y_n-x_n\|\cdot\|x_n-y_{n-1}\|.
\end{equation}
Multiplying both sides by $\lambda_{n+1}^2$, we have
\begin{equation}\label{e:l-ef3s}
\lambda_{n+1}^2\|y_n-a_n\|^2
\leq\tfrac{\lambda_{n+1}^2}{\lambda_n^2}
\big(\lambda_n^2+(1-\lambda_n)^2+2\theta\lambda_n(1-\lambda_n)\big)
\max{\negthinspace}^2\big\{\|y_n-x_n\|,\|x_n-y_{n-1}\|\big\}.
\end{equation}
From Proposition~\ref{p:01}\ref{p:01ii}, we have
\begin{equation}
\|x_{n+1}-y_n\|=\lambda_{n+1} d_A(y_n)\|\leq\lambda_{n+1}\|y_n-a_n\|.
\end{equation}
Combining with \eqref{e:l-ef3s}, we obtain the result.
\end{proof}


A proof analogous to that of Lemma~\ref{l:MARP}
(or interchanging the roles of $A$ and $B$)
yields the following result.

\begin{lemma}\label{l:MARP'}
Let $\nnn$ and
let $\theta\in\left[0,1\right]$ be such that
\begin{equation}\label{e:l-ef01'}
\scal{x_{n+1}-y_n}{x_{n}-y_n}\leq \theta\|x_{n+1}-y_n\|\cdot\|y_n-x_{n}\|.
\end{equation}
Then
\begin{equation}\label{e:l-ef02'}
\|y_{n+1}-x_{n+1}\|\leq
\tfrac{\mu_{n+1}}{\mu_n}\sqrt{\mu_n^2+(1-\mu_n)^2+2\theta\mu_n(1-\mu_n)}
\cdot\max\big\{\|x_{n+1}-y_n\|,\|y_n-x_{n}\|\big\}.
\end{equation}
\end{lemma}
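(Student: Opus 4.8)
The statement is the exact $A\leftrightarrow B$, $\blambda\leftrightarrow\bmu$ mirror image of Lemma~\ref{l:MARP}, so the plan is to transcribe that proof under the substitution dictionary
\[
a_n\leftrightarrow b_n,\quad x_n\leftrightarrow y_n,\quad y_{n-1}\leftrightarrow x_n,\quad y_n\leftrightarrow x_{n+1},\quad \lambda_n\leftrightarrow\mu_n,\quad d_A\leftrightarrow d_B.
\]
The only point requiring care is the bookkeeping of the half-step index shift: in Lemma~\ref{l:MARP} the relaxed point is the $A$-projection $x_n$ and the inner-product hypothesis couples $y_{n-1},x_n,y_n$, whereas here the relaxed point is the $B$-projection $y_n$ and the hypothesis couples $x_n,y_n,x_{n+1}$.

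First I would apply Proposition~\ref{p:01}\ref{p:01iii} to the $B$-projection step $x_n\mapsto b_n\in P_Bx_n\mapsto y_n=(1-\mu_n)x_n+\mu_nb_n$ (that is, with $y:=x_n$, $a:=b_n$, $\lambda:=\mu_n$, $x:=y_n$) to obtain
\[
y_n-b_n=\tfrac{1-\mu_n}{\mu_n}(x_n-y_n).
\]
Feeding this into the hypothesis \eqref{e:l-ef01'} gives the counterpart of \eqref{e:l-ef5},
\[
\scal{x_{n+1}-y_n}{y_n-b_n}=\tfrac{1-\mu_n}{\mu_n}\scal{x_{n+1}-y_n}{x_n-y_n}\leq\tfrac{\theta(1-\mu_n)}{\mu_n}\|x_{n+1}-y_n\|\cdot\|y_n-x_n\|.
\]

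Next I would expand $\|x_{n+1}-b_n\|^2=\|x_{n+1}-y_n\|^2+\|y_n-b_n\|^2+2\scal{x_{n+1}-y_n}{y_n-b_n}$, substitute the two preceding displays, and bound each squared norm on the right by $\max{\negthinspace}^2\{\|x_{n+1}-y_n\|,\|y_n-x_n\|\}$, exactly as in passing from \eqref{e:l-ef2} to \eqref{e:l-ef3s}. Multiplying through by $\mu_{n+1}^2$ yields
\[
\mu_{n+1}^2\|x_{n+1}-b_n\|^2\leq\tfrac{\mu_{n+1}^2}{\mu_n^2}\big(\mu_n^2+(1-\mu_n)^2+2\theta\mu_n(1-\mu_n)\big)\max{\negthinspace}^2\big\{\|x_{n+1}-y_n\|,\|y_n-x_n\|\big\}.
\]
Finally, since $b_n\in B$ we have $d_B(x_{n+1})\leq\|x_{n+1}-b_n\|$, and Proposition~\ref{p:01}\ref{p:01ii} applied to the next $B$-step $x_{n+1}\mapsto b_{n+1}\in P_Bx_{n+1}\mapsto y_{n+1}$ gives $\|y_{n+1}-x_{n+1}\|=\mu_{n+1}d_B(x_{n+1})\leq\mu_{n+1}\|x_{n+1}-b_n\|$; taking square roots in the last display then produces \eqref{e:l-ef02'}. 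I expect no genuine obstacle beyond confirming that this substitution is faithful—in particular that the inequality $d_B(x_{n+1})\leq\|x_{n+1}-b_n\|$ correctly plays the role of $d_A(y_n)\leq\|y_n-a_n\|$—which is immediate because $b_n$ is a feasible (if no longer nearest) point of $B$ for $x_{n+1}$.
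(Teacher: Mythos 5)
Your transcription is correct and is exactly what the paper intends: its entire proof of Lemma~\ref{l:MARP'} is the remark that one argues ``analogously to Lemma~\ref{l:MARP}, interchanging the roles of $A$ and $B$,'' and your substitution dictionary, the identity $y_n-b_n=\tfrac{1-\mu_n}{\mu_n}(x_n-y_n)$, and the final bound $\|y_{n+1}-x_{n+1}\|=\mu_{n+1}d_B(x_{n+1})\leq\mu_{n+1}\|x_{n+1}-b_n\|$ are precisely the right half-step-shifted analogues. No gaps.
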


\section{Abstract Linear Convergence}

\label{sec:abstract}

In this section, we provide convergence results that refine and complement
those of
\cite[Proposition~3.8]{one} and \cite{LLM}\footnote{In fact, the results in
this section hold true in any complete metric space.}.

\begin{lemma}[abstract linear convergence]
\label{l:geo}
Let $(x_n)_\nnn$ and $(y_n)_{n\geq -1}$ be sequences in $X$.
Assume that there exist constants $M\in\RP$ and $\rho\in\left[0,1\right[$ such that
\begin{equation}
(\forall\nnn)\quad\max\big\{d(y_n,x_n),d(x_n,y_{n-1})\big\}\leq
M \rho^n.
\end{equation}
Then there exists $\bar{c}\in X$ such that
\begin{equation}
(\forall\nnn)\quad\max\big\{d(x_n,\bar{c}),d(y_n,\bar{c})\big\}
\leq \frac{M(1+\rho)}{1-\rho}\cdot \rho^n;
\end{equation}
consequently, $(x_n)_\nnn$ and $(y_n)_\nnn$
converge linearly to $\bar c$ with rate $\rho$.
\end{lemma}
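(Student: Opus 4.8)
The plan is to regard the two sequences as a single interleaved sequence and exploit the fact that the hypothesis bounds exactly the gaps between consecutive terms by a geometric sequence. First I would define $(z_k)_{k\in\NN}$ by setting $z_{2n}:=x_n$ and $z_{2n+1}:=y_n$ for every $n\in\NN$, so that the interleaved sequence reads $x_0,y_0,x_1,y_1,\ldots$ Then the hypothesis says precisely that $d(z_{2n+1},z_{2n})=d(y_n,x_n)\leq M\rho^n$ and $d(z_{2n+2},z_{2n+1})=d(x_{n+1},y_n)\leq M\rho^{n+1}$, so the distances between successive terms of $(z_k)$ are dominated by a geometrically decaying sequence.

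Next I would establish that $(z_k)_{k\in\NN}$ is Cauchy: for any indices $m>\ell$, the triangle inequality bounds $d(z_m,z_\ell)$ by the sum of the intervening consecutive gaps, which is in turn bounded by the tail of a convergent geometric series and hence tends to $0$. Since $X$ is complete (indeed the argument works in any complete metric space, as noted in the footnote), $(z_k)_{k\in\NN}$ converges to some $\bar c\in X$, and therefore both $(x_n)_\nnn$ and $(y_n)_\nnn$ converge to $\bar c$.

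For the explicit rate I would sum the consecutive gaps starting from $x_n=z_{2n}$; the key point is that in each pair of successive gaps exactly one power $\rho^{n+1},\rho^{n+2},\ldots$ occurs twice, so
\[
d(x_n,\bar c)\leq\sum_{k=2n}^{\infty}d(z_{k+1},z_k)
\leq M\rho^n+2\sum_{j=n+1}^{\infty}M\rho^j
=M\rho^n+\frac{2M\rho^{n+1}}{1-\rho}
=\frac{M(1+\rho)}{1-\rho}\,\rho^n.
\]
Summing instead from $y_n=z_{2n+1}$ gives $d(y_n,\bar c)\leq 2M\rho^{n+1}/(1-\rho)\leq M(1+\rho)\rho^n/(1-\rho)$, using $2\rho\leq 1+\rho$; taking the maximum yields the asserted inequality, and invoking Definition~\ref{d:linCon} with the constant $M(1+\rho)/(1-\rho)$ and rate $\rho$ finishes the proof. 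There is no serious obstacle here: the whole argument is a bookkeeping exercise in summing a geometric tail, and the only point demanding care is getting the constant $(1+\rho)/(1-\rho)$ exactly right, which hinges on correctly tracking the doubled powers in the interleaved sum (and on the harmless convention $\rho^0=1$, which makes the bound degenerate gracefully when $\rho=0$).
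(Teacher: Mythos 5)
Your proof is correct and takes essentially the same route as the paper's: the paper combines each pair of consecutive gaps into $d(y_i,y_{i-1})\le d(y_i,x_i)+d(x_i,y_{i-1})\le 2M\rho^i$, shows $(y_n)_\nnn$ is Cauchy by summing the geometric tail, and then transfers the estimate to $(x_n)_\nnn$ with one more triangle inequality, whereas you sum the same gaps along the interleaved sequence $x_0,y_0,x_1,y_1,\ldots$ directly. Both arguments are the identical telescoping computation and produce the same constant $M(1+\rho)/(1-\rho)$.
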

\begin{proof}
We have
\begin{equation}
(\forall\nnn)\quad d(y_{n},y_{n-1})\leq
d(y_{n},x_n)+d(x_n,y_{n-1})\leq2\rho^{n}M.
\end{equation}
Hence, for every $k\in\{n+1,n+2,\ldots\}$,
\begin{equation}
\label{e:130425e}
  d(y_k,y_n)\leq\sum_{i=n+1}^k d(y_i,y_{i-1})\leq 2M\sum_{i=n+1}^k
  \rho^{i}\leq \frac{2M\rho^{n+1}}{1-\rho}.
\end{equation}
Thus $(y_n)_\nnn$ is a Cauchy sequence with, say, limit $\bar c\in X$.
Letting $k\to+\infty$ in \eqref{e:130425e}, we see that
\begin{equation}
  d(y_n,\bar c)\leq\frac{2M\rho^{n+1}}{1-\rho}.
\end{equation}
It also follows that
\begin{equation}
\label{e:130430a}
d(x_n,\bar c)\leq d(x_n,y_n)+d(y_n,\bar c)\leq
M\rho^n+\frac{2M\rho^{n+1}}{1-\rho}=
\frac{M(1+\rho)\rho^n}{1-\rho}.
\end{equation}
Therefore, \eqref{e:130430a} implies that
\begin{equation}
(\forall\nnn)\quad\max\big\{d(x_n,\bar c),d(y_n,\bar c)\big\}\leq
\frac{M(1+\rho)}{1-\rho}\cdot \rho^n,
\end{equation}
as claimed.
\end{proof}


\begin{definition}[alternating contraction property]\label{d:contr}
Let $(x_n)_\nnn$ and $(y_n)_{n\geq -1}$ be sequences in $X$,
let $c\in X$, and let $(r,\rho)\in\RPP\times\left[0,1\right[$.
We say that $(x_n)_\nnn$ and $(y_n)_{n\geq-1}$ have
the \emph{alternating contraction property at $c$ with parameters
$(r,\rho)$} if the following implication holds whenever
$\nnn$ and
$(u_1,u_2,u_3,u_4)\in\{(y_{n-1},x_{n},y_n,x_{n+1}),(x_n,y_n,x_{n+1},y_{n+1})\}$:
\begin{equation}\label{e:contr}
\max\big\{d(u_2,c),d(u_3,c)\big\}\leq r
\quad\Rightarrow\quad d(u_3,u_4)\leq\rho
\max\big\{d(u_1,u_2),d(u_2,u_3)\big\}.
\end{equation}
\end{definition}

\begin{theorem}[abstract linear convergence]\label{t:geo}
Let $(x_n)_\nnn$ and $(y_n)_{n\geq -1}$ be sequences in $X$,
and let $(r,\rho)\in\RPP\times \left[0,1\right[$.
Assume that the sequences $(x_n)_\nnn$ and $(y_n)_{n\geq-1}$ have
the alternating contraction property at $y_{-1}$ with parameters
$(r,\rho)$.
Assume further that
\begin{equation}\label{e:geo1}
  M:=\max\big\{d(y_0,x_0),d(x_0,y_{-1})\big\}\leq\tfrac{ r(1-\rho)}{2}.
\end{equation}
Then $(x_n)_\nnn$ and $(y_n)_\nnn$ converge linearly
to a point $\bar c\in X$ with rate $\rho$;
more precisely,
\begin{equation}
(\forall\nnn)\quad
\max\big\{d(x_n,\bar c),d(y_n,\bar c)\big\}\leq
\tfrac{M(1+\rho)}{1-\rho} \rho^n
\leq\tfrac{ r(1+\rho)}{2} \rho^n.
\end{equation}
In addition,
$(x_n)_\nnn$ and $(y_n)_\nnn$, and hence $\bar{c}$,
all lie in $\ball(y_{-1};r)$.
\end{theorem}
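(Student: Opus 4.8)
The plan is to reduce everything to Lemma~\ref{l:geo}: once I verify the geometric decay estimate $\max\{d(y_n,x_n),d(x_n,y_{n-1})\}\leq M\rho^n$ for every $\nnn$, that lemma immediately supplies the limit $\bar c$, the linear rate $\rho$, and the bound $\frac{M(1+\rho)}{1-\rho}\rho^n$. Set $c:=y_{-1}$ and, for $\nnn$, abbreviate $\Delta_n:=\max\{d(y_n,x_n),d(x_n,y_{n-1})\}$, so that $\Delta_0=M$ by \eqref{e:geo1}. I will establish, by strong induction on $n$, the combined assertion that $\Delta_n\leq M\rho^n$ and that both $x_n$ and $y_n$ lie in $\ball(c;r)$.

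The reason the two halves cannot be separated is that the alternating contraction property (Definition~\ref{d:contr}) is triggered only once the iterates entering its hypothesis are within distance $r$ of $c$; so the localization must be transported alongside the decay. Granting the estimate for all indices up to $n$, I first bound the distance of each iterate to $c$ by summing increments along the polygonal path $c=y_{-1}\to x_0\to y_0\to x_1\to\cdots$. Each leg $d(y_{k-1},x_k)$ and each leg $d(x_k,y_k)$ is at most $\Delta_k\leq M\rho^k$, so a geometric series gives $\max\{d(x_n,c),d(y_n,c)\}\leq\frac{2M}{1-\rho}$, and the assumption $M\leq\frac{r(1-\rho)}{2}$ converts this into $\frac{2M}{1-\rho}\leq r$. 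Thus $x_n,y_n\in\ball(c;r)$, and the same running estimate, extended one more leg, will also place $x_{n+1}$ in $\ball(c;r)$.

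With the iterates localized, I apply the two instances of the contraction property in turn. Taking $(u_1,u_2,u_3,u_4)=(y_{n-1},x_n,y_n,x_{n+1})$, the hypothesis $\max\{d(x_n,c),d(y_n,c)\}\leq r$ yields $d(x_{n+1},y_n)\leq\rho\Delta_n$. This bound feeds the geometric-sum argument that confirms $x_{n+1}\in\ball(c;r)$, after which the instance $(u_1,u_2,u_3,u_4)=(x_n,y_n,x_{n+1},y_{n+1})$ gives $d(x_{n+1},y_{n+1})\leq\rho\max\{d(x_n,y_n),d(y_n,x_{n+1})\}\leq\rho\Delta_n$, since $d(x_n,y_n)\leq\Delta_n$ and $d(y_n,x_{n+1})\leq\rho\Delta_n\leq\Delta_n$. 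Hence $\Delta_{n+1}=\max\{d(y_{n+1},x_{n+1}),d(x_{n+1},y_n)\}\leq\rho\Delta_n\leq M\rho^{n+1}$, which closes the induction.

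Finally, Lemma~\ref{l:geo} delivers $\bar c$ together with $\max\{d(x_n,\bar c),d(y_n,\bar c)\}\leq\frac{M(1+\rho)}{1-\rho}\rho^n$, and the refinement $\leq\frac{r(1+\rho)}{2}\rho^n$ is just the inequality $\frac{M}{1-\rho}\leq\frac{r}{2}$. Membership of every $x_n,y_n$ in $\ball(c;r)$ was proved en route, and since $\ball(c;r)$ is closed and contains all the iterates, it contains their common limit $\bar c$ as well. I expect the only genuine obstacle to be the circular coupling highlighted above: the contraction is conditional on proximity to $c$, yet that proximity is itself extracted from the contraction, so both must be proved simultaneously, with the invariant $\frac{2M}{1-\rho}\leq r$ serving to trap the entire orbit inside $\ball(c;r)$.
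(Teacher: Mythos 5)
Your proposal is correct and follows essentially the same route as the paper's proof: an induction that simultaneously propagates the geometric decay $\max\{d(y_n,x_n),d(x_n,y_{n-1})\}\leq M\rho^n$ and the localization of the iterates in $\ball(y_{-1};r)$ (the paper tracks the latter via the explicit partial-sum bound on $d(x_n,y_{-1})$ rather than a strong-induction hypothesis, but the content is identical), followed by an appeal to Lemma~\ref{l:geo}. No gaps.
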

\begin{proof}
Using \eqref{e:geo1}, we estimate
\begin{equation}\label{e:geo3}
(\forall\nnn)\quad 2M\sum_{i=0}^n\rho^i\leq\frac{2M}{1-\rho}\leq r.
\end{equation}
We now show by induction that the following holds for every $\nnn$:
\begin{subequations}
\begin{align}
&d(x_n,y_{-1})\leq \Big(\sum_{i=0}^{n}\rho^i+
\sum_{i=0}^{n-1}\rho^i\Big)M,\label{e:geo42}\\
&\max\big\{d(y_n,x_n),d(x_n,y_{n-1})\big\}\leq \rho^{n} M.\label{e:geo41}
\end{align}
\label{e:geo4}
\end{subequations}
Clearly, in view of the definition of $M$,
\eqref{e:geo4} holds when $n=0$.

Now assume that \eqref{e:geo4} holds for some $\nnn$.
First, using \eqref{e:geo3} and \eqref{e:geo4}, we have
\begin{equation}
\label{e:130427c}
\text{$d(x_n,y_{-1})\leq r$ \;\;and\;\; $d(y_n,y_{-1})\leq
d(y_n,x_n)+d(x_n,y_{-1})\leq r$.}
\end{equation}
So the contraction property applied to the quadruple $(y_{n-1},x_n,y_n,x_{n+1})$ implies
\begin{equation}\label{e:geo6}
d(x_{n+1},y_n)\leq\rho\max\{d(y_n,x_n),d(x_n,y_{n-1})\}\leq\rho^{n+1}M.
\end{equation}
It follows that
\begin{subequations}
\begin{align}
d(x_{n+1},y_{-1})&\leq d(x_{n+1},y_n)+d(y_n,x_n)+d(x_n,y_{-1})\\
&\leq \rho^{n+1}M+\rho^n M +
\Big(\sum_{i=0}^{n}\rho^i+\sum_{i=0}^{n-1}\rho^i\Big)M\\
&=\Big(\sum_{i=0}^{n+1}\rho^i+\sum_{i=0}^{n}\rho^i\Big)M\\
&\leq r.
\end{align}
\end{subequations}
So \eqref{e:geo42} holds with $n$ replaced by $n+1$.
Next, the contraction property applied to the quadruple $(x_n,y_n,x_{n+1},y_{n+1})$
yields
\begin{equation}\label{e:geo5}
d(y_{n+1},x_{n+1})\leq \rho\max\big\{d(x_{n+1},y_n),d(y_n,x_n)\big\}.
\end{equation}
In view of \eqref{e:geo5}, \eqref{e:geo6}, and \eqref{e:geo41}, we deduce
that
\begin{equation}\label{e:geo7}
\max\big\{d(y_{n+1},x_{n+1}),d(x_{n+1},y_{n})\big\}\leq \rho^{n+1}M,
\end{equation}
i.e., \eqref{e:geo41} holds with $n$ replaced by $n+1$.
Thus, by induction, \eqref{e:geo4} holds for every $\nnn$.

Combining \eqref{e:geo41}, Lemma~\ref{l:geo}, and \eqref{e:geo3},
we obtain
\begin{equation}
(\forall\nnn)\quad
  \max\{d(x_n,\bar c),d(y_n,\bar
  c)\}\leq\frac{M(1+\rho)}{1-\rho}\rho^n
  \leq\frac{r(1+\rho)}{2}\rho^n.
\end{equation}
Finally,
\eqref{e:130427c} implies
that the sequences $(x_n)_\nnn$ and $(y_n)_\nnn$,
and consequently their common limit $\bar{c}$, lie in
$\ball(y_{-1};r)$.
\end{proof}

\section{Linear Convergence of the MARP and the CQ
Condition}

\label{s:MARP}
\label{sec:CQ}

The sequences $(x_n)_\nnn$ and $(y_n)_\nnn$ produced by the MARP
need not lie in the sets $A$ and $B$, respectively. Therefore, the
techniques utilized for the method of alternating projections in
\cite{zero,one} and \cite{LLM} cannot be directly applied.  In this section,
we present a new technique which relies on the geometry of
Euclidean spaces. 

In addition to our assumptions on the sets $A$ and $B$,
the relaxation parameter sequences $\blambda = (\lambda_n)_\nnn$
and $\bmu = (\mu_n)_\nnn$,
and the MARP sequences
$\bx = (x_n)_\nnn$ and $\by=(y_n)_\nnn$ with
starting point $y_{-1}\in X$
(see \eqref{e:AandB}, \eqref{e:blbm}, and \eqref{e:bxby}),
we assume the following in this section:
\begin{subequations}
\label{e:stand}
\boxedeqn{
\text{$S$ is a subset of $X$ that is projection absorbing with respect to
$A$ and $B$, $y_{-1}\in S$,}
}
and
\boxedeqn{
(\forall\nnn)\quad
\lambda_n\geq\lambda_{n+1}\to \lambda_\infty
\;\;\text{and}\;\;
 \mu_n\geq\mu_{n+1}\to \mu_\infty,
 \quad\text{and}\quad
\alpha_\infty :=\min\{\lambda_\infty,\mu_\infty\}.
}
\end{subequations}


We start with a technical result.

\begin{lemma}\label{l:0611b}
Let $\mu\in\left]0,1\right]$ and let $\theta\in\left[0,1\right[$.
Then
\begin{equation}
  0<\mu^2+(1-\mu)^2+2\theta\mu(1-\mu)=1-2(1-\theta)\mu(1-\mu)\leq 1,
\end{equation}
and the last inequality is an equality if and only if $\mu=1$.
\end{lemma}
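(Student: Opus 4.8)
The plan is to treat this as a purely elementary computation, since both the identity and the bounds reduce to tracking the sign of a single product. First I would establish the claimed identity by direct expansion: writing out $\mu^2+(1-\mu)^2=2\mu^2-2\mu+1$ and adding $2\theta\mu(1-\mu)$, one sees immediately that this equals $1-2\mu(1-\mu)+2\theta\mu(1-\mu)=1-2(1-\theta)\mu(1-\mu)$. No subtlety is involved here beyond collecting terms.

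For the inequalities I would work from the right-hand form $1-2(1-\theta)\mu(1-\mu)$. Since $\mu\in\left]0,1\right]$ gives $\mu>0$ and $1-\mu\geq 0$, the product $\mu(1-\mu)$ is nonnegative, and since $\theta\in\left[0,1\right[$ forces $1-\theta>0$, the subtracted quantity $2(1-\theta)\mu(1-\mu)$ is nonnegative. This yields the upper bound $1-2(1-\theta)\mu(1-\mu)\leq 1$ at once. For the strict lower bound $0<\cdots$ I would use the standard estimate $\mu(1-\mu)\leq\tfrac14$ on $[0,1]$ together with $1-\theta\leq 1$, which bounds the subtracted term by $2\cdot\tfrac14=\tfrac12$; hence the expression is at least $\tfrac12>0$. (Alternatively one may simply note $\mu^2>0$ in the left-hand form and that the remaining two summands are nonnegative.)

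The only point requiring a little care is the equality characterization, and this is where I would be explicit about strict versus non-strict signs. Equality in $1-2(1-\theta)\mu(1-\mu)\leq 1$ holds if and only if $2(1-\theta)\mu(1-\mu)=0$. Because $1-\theta>0$ and $\mu>0$ are both strictly positive on the given ranges, this product vanishes precisely when $1-\mu=0$, i.e.\ when $\mu=1$; conversely $\mu=1$ clearly returns the value $1$. There is no genuine obstacle in this lemma: it is a self-contained arithmetic fact whose sole role is to supply the contraction factor $\sqrt{\mu^2+(1-\mu)^2+2\theta\mu(1-\mu)}<1$ appearing in Lemmas~\ref{l:MARP} and \ref{l:MARP'}, and the argument is complete once the sign bookkeeping above is recorded.
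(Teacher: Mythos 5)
Your proof is correct and follows essentially the same elementary route as the paper: verify the algebraic identity by expansion, then read off both bounds and the equality characterization from the sign of $2(1-\theta)\mu(1-\mu)$. Your positivity argument (bounding the subtracted term by $\tfrac{1}{2}$, or noting $\mu^2>0$) is in fact slightly more direct than the paper's, which obtains strict positivity from a chain of inequalities beginning at $0\leq 2(1+\theta)\mu(1-\mu)$ together with a small case distinction at $\mu=1$.
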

\begin{proof}
Clearly,
\begin{subequations}
\begin{align}
    0\leq 2(1+\theta)\mu(1-\mu)&= 2\mu(1-\mu)+2\theta\mu(1-\mu)\label{e:0611a}\\
    &\leq \mu^2+(1-\mu)^2+2\theta\mu(1-\mu)\label{e:0611b}\\
    &= \mu^2+(1-\mu)^2+2\mu(1-\mu)-2(1-\theta)\mu(1-\mu)\\
    &=(\mu+(1-\mu))^2-2(1-\theta)\mu(1-\mu)\\
    &=1-2(1-\theta)\mu(1-\mu)\leq 1\label{e:0611c}.
  \end{align}
\end{subequations}
Note that equality in \eqref{e:0611a} occurs exactly when $\mu=1$; in this
case, the inequality \eqref{e:0611b} is strict.
Furthermore, equality in \eqref{e:0611c} occurs exactly when $\mu=1$.
\end{proof}

The following result will help us later in this section to identify the convergence rate of the MARP.

\begin{lemma}\label{l:0611}
Let $\theta\in\left[0,1\right[$ and define ${\rate}\in\RP$ implicitly by
\begin{equation}\label{e:rate}
{\rate}^2:=
\sup_{\nnn}\left\{
\begin{aligned}
&\tfrac{\lambda_{n+1}^2}{\lambda_n^2}
  \big(\lambda_{n}^2+(1-\lambda_n)^2+2\theta\lambda_n(1-\lambda_n)\big),\\
&\tfrac{\mu_{n+1}^2}{\mu_n^2}
  \big(\mu_{n}^2+(1-\mu_n)^2+2\theta\mu_n(1-\mu_n)\big)
\end{aligned}\right\}.
\end{equation}
Then
$0<{\rate}\leq
\sqrt{1-2(1-\theta)\min\big\{\alpha_0(1-\alpha_0),\alpha_\infty(1-\alpha_\infty)\big\}}\leq 1$; consequently,
if $1>\alpha_0\geq\alpha_\infty>0$, then ${\rate} < 1$.
\end{lemma}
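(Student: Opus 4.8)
The plan is to bound each of the two families of terms appearing inside the supremum defining $\rate^2$ by the single quantity $1-2(1-\theta)\min\{\alpha_0(1-\alpha_0),\alpha_\infty(1-\alpha_\infty)\}$, and then to pass to the supremum. I would treat only the $\blambda$-family explicitly, since the $\bmu$-family is handled identically after interchanging the roles of $\lambda$ and $\mu$. First I would rewrite the inner bracket using Lemma~\ref{l:0611b}: for each \nnn\ one has $\lambda_n^2+(1-\lambda_n)^2+2\theta\lambda_n(1-\lambda_n)=1-2(1-\theta)\lambda_n(1-\lambda_n)$, a number lying in $\left]0,1\right]$. Hence the $n$-th $\blambda$-term equals $\tfrac{\lambda_{n+1}^2}{\lambda_n^2}\big(1-2(1-\theta)\lambda_n(1-\lambda_n)\big)$.

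The key step is a \emph{uniform} lower bound on $\lambda_n(1-\lambda_n)$. Since $(\lambda_n)_\nnn$ is nonincreasing with limit $\lambda_\infty$ by \eqref{e:stand}, every $\lambda_n$ lies in $[\lambda_\infty,\lambda_0]$; combining this with $\alpha_\infty=\min\{\lambda_\infty,\mu_\infty\}\leq\lambda_\infty$ and $\lambda_0\leq\max\{\lambda_0,\mu_0\}=\alpha_0$ (see \eqref{e:blbm}), I obtain $\lambda_n\in[\alpha_\infty,\alpha_0]$ for every \nnn. Because the map $t\mapsto t(1-t)$ is concave, its minimum over the interval $[\alpha_\infty,\alpha_0]$ is attained at an endpoint, so $\lambda_n(1-\lambda_n)\geq m:=\min\{\alpha_0(1-\alpha_0),\alpha_\infty(1-\alpha_\infty)\}$ for all $n$.

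It then remains to assemble the estimate. Monotonicity gives $\tfrac{\lambda_{n+1}^2}{\lambda_n^2}\leq 1$; since the bracket $1-2(1-\theta)\lambda_n(1-\lambda_n)$ is strictly positive, scaling it by a factor in $\left]0,1\right]$ only decreases it, so the $n$-th term is at most $1-2(1-\theta)\lambda_n(1-\lambda_n)\leq 1-2(1-\theta)m$. Taking the supremum over $n$ and over both families yields $\rate^2\leq 1-2(1-\theta)m$, which is the claimed middle inequality. The bound $\rate>0$ follows because every term in the supremum is strictly positive (e.g.\ the $n=0$ term), and $\sqrt{1-2(1-\theta)m}\leq 1$ follows from $m\geq 0$, which holds since $\alpha_0,\alpha_\infty\in[0,1]$. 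Finally, under the hypothesis $1>\alpha_0\geq\alpha_\infty>0$ both endpoints $\alpha_0$ and $\alpha_\infty$ lie in $\left]0,1\right[$, whence $m>0$ and therefore $\rate<1$.

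I expect no serious obstacle here; the only point requiring care is the combination in the last paragraph, where one must invoke the strict positivity of the bracket (guaranteed by Lemma~\ref{l:0611b}) in order to conclude that multiplying by the factor $\tfrac{\lambda_{n+1}^2}{\lambda_n^2}\leq 1$ preserves, rather than destroys, the upper bound. The other mildly delicate verification is the chain of inequalities placing every $\lambda_n$ (and $\mu_n$) in $[\alpha_\infty,\alpha_0]$, which is exactly what lets the concavity argument produce a bound uniform in $n$.
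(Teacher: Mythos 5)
Your proof is correct and follows essentially the same route as the paper: rewrite the bracket as $1-2(1-\theta)\lambda_n(1-\lambda_n)$ via Lemma~\ref{l:0611b}, drop the factor $\lambda_{n+1}^2/\lambda_n^2\leq 1$, and bound $\lambda_n(1-\lambda_n)$ from below by the endpoint minimum using concavity of $t\mapsto t(1-t)$. The only cosmetic difference is that the paper first bounds each family's supremum using its own endpoints $\{\lambda_0,\lambda_\infty\}$ (resp.\ $\{\mu_0,\mu_\infty\}$) and then merges the two estimates, whereas you work directly with the enclosing interval $[\alpha_\infty,\alpha_0]$; both reductions are equivalent.
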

\begin{proof}
Let us first consider
\begin{equation}
  \sigma := \sup_{\nnn}\tfrac{\mu_{n+1}^2}{\mu_n^2}
  \big(\mu_{n}^2+(1-\mu_n)^2+2\theta\mu_n(1-\mu_n)\big),
\end{equation}
the corresponding supremum involving $\blambda$ is treated similarly.
Lemma~\ref{l:0611b} yields $\sigma > 0$.
Since $(\forall\nnn)$
$0<\frac{\mu_\infty}{\mu_0}\leq\frac{\mu_{n+1}}{\mu_n}\leq 1$
and hence $\frac{\mu_{n+1}^2}{\mu_n^2}\leq 1$, we estimate
with the help of Lemma~\ref{l:0611b} that
\begin{subequations}
\begin{align}
  0&<\sigma
  \leq \sup_{\nnn}\big(\mu_{n}^2+(1-\mu_n)^2+2\theta\mu_n(1-\mu_n)\big)\\
  &= \sup_{\nnn}\big(1-2(1-\theta)\mu_n(1-\mu_n)\big)\\
  &= 1 - 2(1-\theta)\min\big\{\mu_0(1-\mu_0),\mu_\infty(1-\mu_\infty)\big\}
\end{align}
\end{subequations}
because any minimizer of the function $\mu\mapsto\mu(1-\mu)$ restricted to
the interval $[\mu_\infty,\mu_0]$ must be one of the endpoints of the
interval.
The conclusion now follows by combining this estimate with its $\blambda$
counterpart.
\end{proof}

The following result provides information about the
location of limits of the MARP.

\begin{proposition}
\label{p:0602a}
Suppose both sequences $(x_n)_\nnn$ and $(y_n)_\nnn$
generated by the MARP
converge to $\bar{c}\in X$. Then the following hold:
\begin{enumerate}
\item
\label{p:0602a1}
If $\lambda_\infty>0$, then $\bar{c}\in A$.
\item
\label{p:0602a2}
If $\mu_\infty>0$, then $\bar{c}\in B$.
\item
\label{p:0602a3}
If $\alpha_\infty>0$, then $\bar{c}\in A\cap B$.
\end{enumerate}
\end{proposition}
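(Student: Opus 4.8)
The plan is to exploit the two gap identities furnished by Proposition~\ref{p:01}\ref{p:01ii} together with the fact that, under the standing assumption \eqref{e:stand}, the relaxation parameters decrease to \emph{positive} limits and are therefore bounded away from zero. For \ref{p:0602a1}, I would begin from the elementary observation that $x_n\to\bar c$ and $y_{n-1}\to\bar c$ force $\|x_n-y_{n-1}\|\to 0$. By Proposition~\ref{p:01}\ref{p:01ii} this quantity equals $\lambda_n d_A(y_{n-1})$, and since $(\lambda_n)_\nnn$ is nonincreasing with limit $\lambda_\infty>0$ we have $\lambda_n\geq\lambda_\infty>0$ for every $\nnn$. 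Dividing out then yields
\begin{equation}
d_A(y_{n-1})\leq\tfrac{1}{\lambda_\infty}\|x_n-y_{n-1}\|\to 0.
\end{equation}

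Next I would invoke the nonexpansiveness (hence continuity) of the distance function $d_A$ together with $y_{n-1}\to\bar c$ to conclude that $d_A(y_{n-1})\to d_A(\bar c)$; comparing this with the previous display gives $d_A(\bar c)=0$, and because $A$ is closed this means $\bar c\in A$. Part \ref{p:0602a2} is entirely symmetric: the gap $\|y_n-x_n\|=\mu_n d_B(x_n)$ tends to $0$, the bound $\mu_n\geq\mu_\infty>0$ forces $d_B(x_n)\to 0$, and continuity of $d_B$ with closedness of $B$ delivers $\bar c\in B$. Finally, \ref{p:0602a3} is immediate from the first two parts: since $\alpha_\infty=\min\{\lambda_\infty,\mu_\infty\}$, the hypothesis $\alpha_\infty>0$ forces both $\lambda_\infty>0$ and $\mu_\infty>0$, so \ref{p:0602a1} and \ref{p:0602a2} apply simultaneously to give $\bar c\in A\cap B$.

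There is no genuine obstacle in this argument; it is a short limiting computation. The only point that deserves care is the passage from $\lambda_n d_A(y_{n-1})\to 0$ to $d_A(y_{n-1})\to 0$, which really does use the monotone-to-positive-limit structure of \eqref{e:stand} rather than mere positivity of each individual $\lambda_n$. Indeed, were the parameters permitted to vanish in the limit, the product $\lambda_n d_A(y_{n-1})$ could tend to zero while $d_A(y_{n-1})$ stays bounded away from zero, and then $\bar c$ need not lie in $A$; this is precisely why the hypotheses $\lambda_\infty>0$, $\mu_\infty>0$, $\alpha_\infty>0$ appear in the three respective parts.
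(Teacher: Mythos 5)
Your argument is correct and is essentially the paper's own proof: both reduce to the identity $\|x_{n+1}-y_n\|=\lambda_{n+1}d_A(y_n)$ from Proposition~\ref{p:01}\ref{p:01ii}, use the lower bound $\lambda_n\geq\lambda_\infty>0$ coming from \eqref{e:stand} to force $d_A(y_n)\to 0$, and then conclude $\bar c\in A$ by continuity of $d_A$ and closedness of $A$ (with the symmetric argument for $B$ and the intersection for part \ref{p:0602a3}). Your closing caveat about what fails when the parameters vanish in the limit is also consistent with the paper, which illustrates exactly that failure in Examples~\ref{ex:130515c}--\ref{ex:130515b}.
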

\begin{proof}
Clearly, $x_n-y_n\to 0$ and $y_n-x_{n+1}\to 0$.

\ref{p:0602a1}:
Suppose that $\lambda_\infty>0$.
By Proposition~\ref{p:01}\ref{p:01ii},
$0\leftarrow \|x_{n+1}-y_n\| = \lambda_{n+1}d_A(y_n)$.
Since $\lambda_\infty>0$, it follows that $d_A(y_n)\to 0$.
Hence, $\bar{c}\in A$.

\ref{p:0602a2}: The proof is analogous to that of \ref{p:0602a1}.

\ref{p:0602a3}: Combine \ref{p:0602a1} and \ref{p:0602a2}.
\end{proof}

The following examples illustrate that no conclusion can be drawn about the
location of the limit point when $\lambda_\infty=0$ or  $\mu_\infty = 0$.

\begin{example}[MARP limit point lies outside $A\cup B$ and
$\lambda_\infty=\mu_\infty=0$]
\label{ex:130515c}
Suppose that $X=S=\RR$, that $A=B=\RM$.
Let $\dd\in\RPP$, and assume that
$(\blambda,\bmu)$ satisfy
\begin{equation}
\label{e:0505-1}
(\forall\nnn)\quad
\lambda_n= \mu_n = 1- \sqrt{\frac{\dd+2^{-(n+1)}}{\dd+2^{-n}}}\in\zeroun.
\end{equation}
Then $\lambda_\infty=\mu_\infty=0$.
Suppose that $y_{-1}\in\RPP=X\smallsetminus(A\cup B)$.
Then the $(\blambda,\bmu)$-MARP sequences are
\begin{equation}
(\forall\nnn)\quad x_n=y_{n-1}\sqrt{\frac{\dd+2^{-(n+1)}}{\dd+2^{-n}}}
\quad\text{and}\quad
y_n=x_n\sqrt{\frac{\dd+2^{-(n+1)}}{\dd+2^{-n}}},
\end{equation}
which inductively leads to
\begin{equation}
x_n=y_{-1}\sqrt{\frac{\dd+2^{-(n+1)}}{\dd+2^{-n}}}\left(\frac{\dd+2^{-n}}{\dd+1}\right)
\quad\text{and}\quad
y_n=y_{-1}\frac{\dd+2^{-(n+1)}}{\dd+1}.
\end{equation}
Note that $\lim_\nnn x_n = \lim_\nnn y_n=\frac{\dd y_{-1}}{\dd+1}\notin A\cup B$.
\end{example}

\begin{example}[MARP limit point lies in $A\cap B$ and
$\lambda_\infty=\mu_\infty=0$]
\label{ex:130515a}
Suppose that $X=S=\RR$, that $A=B=\RM$, and
that $\lambda_\infty=\mu_\infty=0$ while
$\sum_\nnn\lambda_n = \sum_\nnn\mu_n=\pinf$.
Furthermore, assume that $y_{-1}=\eta \in\RPP$.
Then
\begin{equation}
(\forall\nnn)\quad
x_n=\eta\prod_{i=0}^n(1-\lambda_i)\prod_{i=0}^{n-1}(1-\mu_i)
\quad\text{and}\quad
y_n=\eta\prod_{i=0}^n(1-\lambda_i)\prod_{i=0}^{n}(1-\mu_i),
\end{equation}
and so
\begin{subequations}
\begin{align}
\ln(y_n/\eta)&=\ln\Big(\prod_{i=0}^n(1-\lambda_i)\prod_{i=0}^{n}(1-\mu_i)\Big)=\sum_{i=0}^n\ln(1-\lambda_i)+ \sum_{i=0}^n\ln(1-\mu_i)\\
&\leq \sum_{i=0}^n(-\lambda_i)+\sum_{i=0}^n(-\mu_i)
\to -\infty.
\end{align}
\end{subequations}
It follows that $y_n\to 0$ and thus $x_n\to 0$.
Hence $\lim_\nnn x_n=\lim_\nnn y_n \in A\cap B$.
\end{example}

\begin{example}[MARP limit point lies outside $A\cap B$ and $\lambda_\infty>0=\mu_\infty$]
\label{ex:130515b}
Suppose that $X=S=\RR^2$, that $A=\RR\times\{0\}$, that
$B=\{0\}\times\RR$, that
$(\forall\nnn)$ $\lambda_n=\frac{1}{2}$ and
$\mu_n=1-\frac{1+2^{-(n+1)}}{1+2^{-n}}$,
and that $y_{-1}=(\eta,\zeta)\in X\smallsetminus A$.
Then
\begin{equation}
(\forall\nnn)\quad
x_n=\left(\tfrac{\eta}{2^{n+1}},\tfrac{\zeta(1+2^{-n})}{2}\right)
\quad\text{and}\quad
y_n=\left(\tfrac{\eta}{2^{n+1}},\tfrac{\zeta(1+2^{-(n+1)})}{2}\right);
\end{equation}
therefore, $\lim_\nnn x_n = \lim_\nnn y_n= (0,\tfrac{\zeta}{2}) \in B\smallsetminus
A$.
\end{example}


We now present the main convergence result of this section.

\begin{theorem}[local linear convergence]
\label{t:main1}
Let $r\in\RPP$ and let $\theta\in\left[0,1\right[$.
Assume that the following hold:
\begin{enumerate}
\item\label{t:main1-o}
$\alpha_0<1$;
\item\label{t:main1-oo}
${\rate}\leq\rho<1$, where ${\rate}$ is as in Lemma~\ref{l:0611};
\item\label{t:main1-i}
$\max\big\{d_A(y_{-1}),d_B(y_{-1})\big\}\leq
\tfrac{r(1-\rho)}{2\alpha_0(1+\alpha_0)}$;
\item\label{t:main1-ii}
\begin{equation}\label{e:main1}
\big(\forall x\in S \cap\ball(y_{-1};r)\big)\big(\forall a\in P_Ax\big)\big(\forall b\in
P_Bx\big)\quad
\scal{a-x}{x-b}\leq\theta\|a-x\|\cdot\|x-b\|.
\end{equation}
\end{enumerate}
Then the MARP sequences $(x_n)_\nnn$ and $(y_n)_\nnn$ converge linearly
with rate $\rho$
to some point $\bar c\in \ball(y_{-1};r)$ and
\begin{equation}
(\forall\nnn)\quad\max\big\{\|x_n-\bar c\|,\|y_n-\bar c\|\big\}
\leq\tfrac{r(1+\rho)}{2}\rho^n.
\end{equation}
Furthermore, if $\min\{\lambda_\infty,\mu_\infty\}>0$, then $\bar{c}\in
A\cap B$.
\end{theorem}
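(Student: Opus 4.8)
The plan is to deduce everything from the abstract result Theorem~\ref{t:geo}, applied to the MARP sequences with center $c=y_{-1}$ and parameters $(r,\rho)$. Thus it suffices to verify the two hypotheses of that theorem: first, that $(x_n)_\nnn$ and $(y_n)_{n\geq-1}$ enjoy the alternating contraction property at $y_{-1}$ with parameters $(r,\rho)$ (Definition~\ref{d:contr}); and second, that $M:=\max\{\|y_0-x_0\|,\|x_0-y_{-1}\|\}\leq\tfrac{r(1-\rho)}{2}$, i.e.\ \eqref{e:geo1}. The displayed error bound, the rate $\rho$, and the location of $\bar c$ in $\ball(y_{-1};r)$ then follow immediately from the conclusion of Theorem~\ref{t:geo}.

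The bound on $M$ is routine: Lemma~\ref{l:first} gives $M\leq\alpha_0(1+\alpha_0)\beta$ with $\beta=\max\{d_A(y_{-1}),d_B(y_{-1})\}$, and hypothesis~\ref{t:main1-i} forces $\beta\leq\tfrac{r(1-\rho)}{2\alpha_0(1+\alpha_0)}$, so that $M\leq\tfrac{r(1-\rho)}{2}$, which is \eqref{e:geo1}. I also record at the outset that, since $y_{-1}\in S$ and $S$ is projection absorbing with respect to both $A$ and $B$, Proposition~\ref{p:1216a} yields $x_n\in S$ and $y_n\in S$ for all $\nnn$; this is precisely what lets me invoke the angle hypothesis \eqref{e:main1} along the orbit.

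The heart of the argument — and the step I expect to be the main obstacle — is the alternating contraction property, because Lemmas~\ref{l:MARP} and \ref{l:MARP'} are phrased in terms of the MARP increments $y_n-x_n$, $y_{n-1}-x_n$, whereas \eqref{e:main1} is phrased in terms of the true projections $a\in P_Ax$, $b\in P_Bx$. The bridge is Proposition~\ref{p:01}. For the first quadruple $(u_1,u_2,u_3,u_4)=(y_{n-1},x_n,y_n,x_{n+1})$, assume $\max\{\|x_n-y_{-1}\|,\|y_n-y_{-1}\|\}\leq r$. Since $\alpha_0<1$ and the $\lambda_n$ are nonincreasing, $\lambda_n\leq\lambda_0\leq\alpha_0<1$, so Proposition~\ref{p:01}\ref{p:01iii} gives $y_{n-1}-x_n=\tfrac{\lambda_n}{1-\lambda_n}(x_n-a_n)$ and Proposition~\ref{p:01}\ref{p:01ii} gives $y_n-x_n=\mu_n(b_n-x_n)$, where $a_n=P_Ax_n$ and $b_n\in P_Bx_n$. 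Because $x_n\in S\cap\ball(y_{-1};r)$, hypothesis~\ref{t:main1-ii} applies at $x=x_n$, and a short computation turns $\scal{a_n-x_n}{x_n-b_n}\leq\theta\|a_n-x_n\|\cdot\|x_n-b_n\|$ into exactly the hypothesis \eqref{e:l-ef01} of Lemma~\ref{l:MARP}. That lemma then supplies the bound \eqref{e:l-ef02}, whose coefficient $\tfrac{\lambda_{n+1}}{\lambda_n}\sqrt{\lambda_n^2+(1-\lambda_n)^2+2\theta\lambda_n(1-\lambda_n)}$ is, by the definition of $\rate$ in Lemma~\ref{l:0611} together with hypothesis~\ref{t:main1-oo}, at most $\rate\leq\rho$; this is precisely \eqref{e:contr} for this quadruple. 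The second quadruple $(x_n,y_n,x_{n+1},y_{n+1})$ is handled symmetrically: one applies \ref{t:main1-ii} at $x=y_n\in S\cap\ball(y_{-1};r)$, using $P_B(y_n)=b_n$ (Proposition~\ref{p:01}\ref{p:01i}) and $\mu_n<1$, to verify hypothesis \eqref{e:l-ef01'} of Lemma~\ref{l:MARP'}, whose coefficient is again bounded by $\rate\leq\rho$.

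With both hypotheses of Theorem~\ref{t:geo} established, that theorem produces a limit $\bar c\in\ball(y_{-1};r)$, the linear rate $\rho$, and the stated estimate. For the final assertion, if $\min\{\lambda_\infty,\mu_\infty\}>0$ then $\alpha_\infty>0$, so Proposition~\ref{p:0602a}\ref{p:0602a3}, applied to the now-convergent sequences $(x_n)_\nnn$ and $(y_n)_\nnn$, yields $\bar c\in A\cap B$.
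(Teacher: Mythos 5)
Your proposal is correct and follows essentially the same route as the paper: establish that the orbit stays in $S$ via Proposition~\ref{p:1216a}, transfer the angle condition \eqref{e:main1} to the hypotheses of Lemmas~\ref{l:MARP} and \ref{l:MARP'} through the positive-multiple identities of Proposition~\ref{p:01}, bound the resulting coefficients by $\rate\leq\rho$, verify \eqref{e:geo1} via Lemma~\ref{l:first} and hypothesis~\ref{t:main1-i}, and conclude with Theorem~\ref{t:geo} and Proposition~\ref{p:0602a}. The only cosmetic differences are that you invoke Proposition~\ref{p:01}\ref{p:01ii} where the paper uses \ref{p:01iii} for one identity (both yield the needed positive-scalar relation) and that you spell out the second quadruple, which the paper dispatches as ``treated similarly.''
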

\begin{proof}
Combining \eqref{e:stand} and Proposition~\ref{p:1216a},
we deduce that $(x_n)_\nnn $ and $(y_n)_\nnn$ lie in $S$.

We now claim that $(x_n)_\nnn$ and $(y_n)_{n\geq -1}$ have
the alternating contraction property at $y_{-1}$ with parameters
$(r,\rho)$
(see Definition~\ref{d:contr}).
Let us fix $\nnn$ and check \eqref{e:contr} for the quadruple
$(y_{n-1},x_n,y_n,x_{n+1})$;
the other quadruple $(x_n,y_n,x_{n+1},y_{n+1})$ is treated similarly.
We assume that $\|x_n-y_{-1}\|\leq r$.
Since $a_n\in P_Ax_n$ and $b_n\in P_Bx_n$ (see Definition~\ref{d:MARP}),
\eqref{e:main1} yields
\begin{equation}
\scal{a_n-x_n}{x_n-b_n}\leq\theta\|a_n-x_n\|\cdot\|x_n-b_n\|.
\end{equation}
Proposition~\ref{p:01}\ref{p:01iii} implies that
$y_{n-1}-x_n=\frac{\lambda_n}{1-\lambda_n}(x_n-a_n)$ and
$y_n-x_n=\frac{\mu_n}{1-\mu_n}(b_n-x_n)$; thus,
\begin{equation}
\scal{y_n-x_n}{y_{n-1}-x_n}\leq\theta\|y_n-x_n\|\cdot\|x_n-y_{n-1}\|.
\end{equation}
Hence, by Lemma~\ref{l:MARP} and assumption~\ref{t:main1-oo},
\begin{equation}
\|x_{n+1}-y_{n}\|\leq \rho\max\big\{\|y_n-x_n\|,\|x_n-y_{n-1}\|\big\}.
\end{equation}
Thus \eqref{e:contr} holds, as claimed.

It now follows from \eqref{e:130425d} of Lemma~\ref{l:first} and
assumption~\ref{t:main1-i} that
\begin{equation}
M:=\max\big\{\|y_0-x_0\|,\|x_0-y_{-1}\|\big\}
\leq\alpha_0(1+\alpha_0)\max\big\{d_A(y_{-1}),d_B(y_{-1})\big\}
\leq\tfrac{r(1-\rho)}{2}.
\end{equation}
Hence, by Theorem~\ref{t:geo},
$(x_n)_\nnn$ and $(y_n)_\nnn$ converge linearly to $\bar c\in\ball(y_{-1},r)$ and
\begin{equation}
(\forall\nnn)\quad\max\big\{\|x_n-\bar c\|,\|y_n-\bar c\|\big\}
\leq\tfrac{r(1+\rho)}{2}\rho^n.
\end{equation}
Finally, recall Proposition~\ref{p:0602a}.
\end{proof}

\begin{remark}[best bound for the convergence rate]
\label{r:bb1}
In Theorem~\ref{t:main1}, the linear rate is tied to the constant
${\rate}$ defined by \eqref{e:rate}.
The computation of ${\rate}$ appears to be hard in general; however,
the upper bound provided in Lemma~\ref{l:0611} is minimized when
$\lambda_0=\lambda_\infty=\mu_0=\mu_\infty=\tfrac{1}{2}$, i.e., when
$(\forall\nnn)$ $\lambda_n=\mu_n=\tfrac{1}{2}$, in which case
\begin{equation}
0< {\rate} = \sqrt{\frac{1+\theta}{2}} < 1.
\end{equation}
\end{remark}

The following result concerns global convergence.
As a consequence, it somewhat surprisingly
guarantees the {\em nonemptiness} of the intersection.

\begin{corollary}[global convergence]\label{c:global}
Assume that
$1>\alpha_0\geq\alpha_\infty>0$ and
that there exists $\theta\in\left[0,1\right[$ such that
\begin{equation}
\label{e:130517a}
\big(\forall x\in S\big)\big(\forall a\in P_Ax\big)\big(\forall b\in P_Bx\big)
\quad \scal{a-x}{x-b}\leq\theta\|a-x\|\cdot\|x-b\|.
\end{equation}
Then the MARP sequences $(x_n)_\nnn$ and $(y_n)_\nnn$ converge linearly
with rate ${\rate}$ to some point in $A\cap B$, where
${\rate}\in\zeroun$ is defined in Lemma~\ref{l:0611}.
\end{corollary}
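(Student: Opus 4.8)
The plan is to reduce this global statement to the local result Theorem~\ref{t:main1}, exploiting that the angle condition \eqref{e:130517a} now holds at \emph{every} point of $S$ rather than only on a ball. The key freedom is the radius $r$: since \eqref{e:130517a} is global and $S\cap\ball(y_{-1};r)\subseteq S$, hypothesis~\ref{t:main1-ii} of Theorem~\ref{t:main1} is automatically satisfied for \emph{any} $r\in\RPP$. We are therefore free to enlarge $r$ as much as needed to meet the remaining quantitative hypotheses.

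First I would set $\rho:={\rate}$, where ${\rate}$ is the constant from Lemma~\ref{l:0611}. Since we assume $1>\alpha_0\geq\alpha_\infty>0$, Lemma~\ref{l:0611} gives ${\rate}<1$; hence hypothesis~\ref{t:main1-oo} holds with equality, and hypothesis~\ref{t:main1-o} (that is, $\alpha_0<1$) is part of our assumptions. The only remaining condition to verify is the smallness assumption~\ref{t:main1-i},
\begin{equation*}
\max\big\{d_A(y_{-1}),d_B(y_{-1})\big\}\leq
\tfrac{r(1-\rho)}{2\alpha_0(1+\alpha_0)}.
\end{equation*}
Because $A$ and $B$ are nonempty (so $d_A(y_{-1})$ and $d_B(y_{-1})$ are finite) and because $1-\rho>0$ while $\alpha_0\in\left]0,1\right[$, the right-hand side tends to $+\infty$ as $r\to+\infty$, whereas the left-hand side is a fixed finite number. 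It thus suffices to choose, say,
\begin{equation*}
r:=\frac{2\alpha_0(1+\alpha_0)}{1-\rho}\max\big\{d_A(y_{-1}),d_B(y_{-1})\big\}+1,
\end{equation*}
which makes~\ref{t:main1-i} hold.

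With this choice of $(r,\rho)$ all four hypotheses of Theorem~\ref{t:main1} are in force, so that theorem yields linear convergence of $(x_n)_\nnn$ and $(y_n)_\nnn$ with rate $\rho={\rate}$ to some $\bar c\in\ball(y_{-1};r)$. Finally, since $\alpha_\infty=\min\{\lambda_\infty,\mu_\infty\}>0$ by assumption, the concluding clause of Theorem~\ref{t:main1} (which rests on Proposition~\ref{p:0602a}) gives $\bar c\in A\cap B$; in particular $A\cap B\neq\varnothing$, the advertised consequence.

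I expect no genuine obstacle: the entire content is the observation that a \emph{global} angle bound removes the ball restriction, so that $r$ may be taken large enough to absorb the starting distance $\max\{d_A(y_{-1}),d_B(y_{-1})\}$. The only point requiring (minor) care is confirming that the right-hand side of~\ref{t:main1-i} really grows without bound in $r$—which it does precisely because $\rho<1$ and $\alpha_0<1$—together with the finiteness of $d_A(y_{-1})$ and $d_B(y_{-1})$, guaranteed by the nonemptiness of $A$ and $B$.
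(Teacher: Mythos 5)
Your argument is correct and is exactly the intended derivation: the paper states this as an immediate corollary of Theorem~\ref{t:main1} (offering no separate proof), and the whole content is indeed that the global validity of \eqref{e:130517a} lets one take $r$ arbitrarily large so that hypothesis~\ref{t:main1-i} is satisfied while hypothesis~\ref{t:main1-ii} holds automatically, after which the final clause of Theorem~\ref{t:main1} together with $\alpha_\infty>0$ places the limit in $A\cap B$. No gaps.
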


\begin{example}[two subspaces]
Suppose that $A$ and $B$ are affine subspaces with $A\cap B\neq\varnothing$,
that $S=\aff(A\cup B)$,
and that $1>\alpha_0\geq \alpha_\infty>0$.
Then there exists $\theta\in\left[0,1\right[$ such that ${\rate}\in\zeroun$,
where $\rate$ is defined in Lemma~\ref{l:0611}.
Moreover, the MARP sequences $(x_n)_\nnn$ and $(y_n)_\nnn$ converge linearly
with rate ${\rate}$ to some point in $A\cap B$.
\end{example}
\begin{proof}
(See also \cite[Theorem~5.7]{BBSIREV} for a closely related result.)
After translating if necessary, we assume that $A$ and $B$ are linear
subspaces, and that $S=A+B$.
Let $x\in S$, let $a\in P_Ax$, and let $b\in P_Bx$.
Using \cite[Theorem~3.5]{zero},
we have
$x-a \in \pn{A}{S}(a)\subseteq \nc{A}{S}(a) = N_A(a)\cap S = A^\perp \cap
(A+B)$.
Similarly, $x-b\in B^\perp \cap (A+B)$.
Since $A$ and $B$ are subspaces and
$A^\perp\cap (A+B)\cap B^\perp\cap (A+B)
=(A+B)\cap(A+B)^\perp = \{0\}$,
we set
\begin{equation}
\theta := \max \scal{A^\perp\cap (A^\perp\cap B^\perp)^\perp\cap \ball(0;1)}{B^\perp \cap (A^\perp \cap
B^\perp)^\perp\cap \ball(0;1)} <  1.
\end{equation}
(Thus, $\theta$ is the cosine of the \emph{Friedrichs angle} between $A^{\perp}$
and $B^{\perp}$, which is identical to the cosine of the {Friedrichs angle} between $A$ and $B$.)
Hence \eqref{e:130517a} holds and the conclusion now follows from
Corollary~\ref{c:global}.
\end{proof}

In the spirit of \cite{LLM} and \cite{one},
we now guarantee local linear convergence when the CQ condition holds.


\begin{theorem}[local convergence via CQ condition]
\label{t:loc-viaCQ}
Suppose that $1>\alpha_0\geq \alpha_\infty>0$,
that $c\in A\cap B$ and
that the $(A,S,B,S)$-CQ holds at $c$, i.e. (see
Definition~\ref{d:CQ}),
\begin{equation}
\nc{A}{S}(c)\cap(-\nc{B}{S}(c))=\{0\}.
\end{equation}
In view of \eqref{e:CQ-CQn},
the limiting CQ number associated with $(A,S,B,S)$
(see Definition~\ref{d:CQn}) satisfies
\begin{equation}
\overline\theta=\max\Menge{\scal{u}{v}}{ u\in \nc{A}{S}(c), v\in
-\nc{B}{S}(c),\|u\|\leq 1, \|v\|\leq 1} < 1.
\end{equation}
Let $\theta\in\left]\overline\theta,1\right[$.
Then there exists $\dd>0$ such that
whenever the starting point $y_{-1}$ lies in $S\cap\ball(c;\dd)$,
the sequences $(x_n)_\nnn$ and $(y_n)_\nnn$ generated by the MARP
converge linearly to a point in $A\cap B$
with rate ${\rate}\in\zeroun$ (see Lemma~\ref{l:0611}).
\end{theorem}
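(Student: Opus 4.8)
The plan is to deduce everything from Theorem~\ref{t:main1} after selecting the rate $\rho$ and the radii $r,\dd$; the only genuine work is to turn the CQ hypothesis at $c$ into the pointwise angle condition~\eqref{e:main1} on a neighbourhood of $c$. Since $1>\alpha_0\geq\alpha_\infty>0$, Lemma~\ref{l:0611} gives $\rate\in\zeroun$ for the prescribed $\theta$, so I set $\rho:=\rate$; then hypotheses~\ref{t:main1-o} and~\ref{t:main1-oo} of Theorem~\ref{t:main1} are immediate, and the promised rate will be exactly $\rate$.

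The heart of the argument is the pointwise estimate. First I would record the geometric fact linking projections to restricted proximal normal cones: if $x\in S$ and $a\in P_Ax$, then $x\in S\cap P_A^{-1}a$, so $x-a\in\pn{A}{S}(a)$; likewise, if $b\in P_Bx$ then $b-x\in-\pn{B}{S}(b)$. Setting $u:=x-a$ and $v:=b-x$ (the degenerate cases $x=a$ or $x=b$ render~\eqref{e:main1} trivial), one has $\scal{a-x}{x-b}=\scal{u}{v}$. Because $\lim_{\dd\dn0}\theta_\dd(A,S,B,S)=\overline\theta<\theta$ by~\eqref{e:lCQn}, I fix $\dd_0>0$ with $\theta_{\dd_0}(A,S,B,S)\leq\theta$. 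Using $c\in A\cap B$, the residuals obey $\|a-c\|\leq\|a-x\|+\|x-c\|=d_A(x)+\|x-c\|\leq 2\|x-c\|$ and similarly $\|b-c\|\leq 2\|x-c\|$, so whenever $\|x-c\|\leq\dd_0/2$ the normalized pair $(u/\|u\|,v/\|v\|)$ is admissible in the supremum defining $\theta_{\dd_0}$. Hence $\scal{u}{v}\leq\theta_{\dd_0}\|u\|\cdot\|v\|\leq\theta\|a-x\|\cdot\|x-b\|$, which is precisely~\eqref{e:main1}.

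It then remains to calibrate the radii so that both the localization $\|x-c\|\leq\dd_0/2$ and hypothesis~\ref{t:main1-i} hold. I would take $r:=\dd_0/4$ and $\dd:=\min\{\dd_0/4,\ r(1-\rho)/(2\alpha_0(1+\alpha_0))\}$. If $y_{-1}\in S\cap\ball(c;\dd)$, then any $x\in S\cap\ball(y_{-1};r)$ satisfies $\|x-c\|\leq r+\dd\leq\dd_0/2$, giving hypothesis~\ref{t:main1-ii}; and $\max\{d_A(y_{-1}),d_B(y_{-1})\}\leq\|y_{-1}-c\|\leq\dd\leq r(1-\rho)/(2\alpha_0(1+\alpha_0))$ gives hypothesis~\ref{t:main1-i}. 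Theorem~\ref{t:main1} now yields linear convergence of $(x_n)_\nnn$ and $(y_n)_\nnn$ with rate $\rho=\rate$ to some $\bar c\in\ball(y_{-1};r)$, and since $\min\{\lambda_\infty,\mu_\infty\}=\alpha_\infty>0$, its last clause forces $\bar c\in A\cap B$.

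I expect the main obstacle to be the middle step: correctly identifying $x-a$ and $b-x$ as elements of $\pn{A}{S}(a)$ and $-\pn{B}{S}(b)$ (so that $\overline\theta$ is the right quantity controlling $\scal{a-x}{x-b}$) and bounding $\|a-c\|,\|b-c\|$ by $2\|x-c\|$ so the admissibility constraints of the CQ-number are met; the remaining calibration of $r$ and $\dd$ is routine.
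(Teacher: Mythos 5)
Your proposal is correct and follows essentially the same route as the paper: both arguments reduce to Theorem~\ref{t:main1} with $\rho=\rate$, use the identifications $x-a\in\pn{A}{S}(a)$ and $b-x\in-\pn{B}{S}(b)$ together with the bounds $\|a-c\|,\|b-c\|\leq 2\|x-c\|$ to convert the CQ-number estimate into condition~\eqref{e:main1}, and then calibrate $r$ and $\dd$ so that hypothesis~\ref{t:main1-i} holds. The only difference is cosmetic: the paper chooses $\dd$ and $r$ so that $r+\dd$ equals the CQ radius exactly, whereas you take a minimum of two quantities; both calibrations are valid.
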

\begin{proof}
There exists $\ve>0$ sufficiently small such that
$\theta_{2\ve}\leq\theta$,
where $\theta_{2\ve}$ is the CQ number associated with
$(A,S,B,S)$ and $2\ve$ (see Definition~\ref{d:CQn}).
We claim that
\begin{equation}\label{e:ve-to-dd}
\dd:=\tfrac{\ve(1-\rate)}{1-{\rate}+2\alpha_0(1+\alpha_0)}
\end{equation}
does the job.

To this end, assume that $y_{-1}\in S\cap\ball(c;\dd)$ and set
\begin{equation}
\label{e:130426a}
r:=\tfrac{2\dd\alpha_0(1+\alpha_0)}{1-\rate}.
\end{equation}
Since $c\in A\cap B$, we deduce that
\begin{equation}
\max\big\{d_A(y_{-1}), d_B(y_{-1})\big\}\leq \|y_{-1}-c\|\leq\dd
=\tfrac{r(1-\rate)}{2\alpha_0(1+\alpha_0)},
\end{equation}
which is assumption \ref{t:main1-i} of  Theorem~\ref{t:main1}.

Now let $x\in S\cap \ball(y_{-1};r)$,
let $a\in P_Ax$, and let $b\in P_Bx$.
Using \eqref{e:ve-to-dd} and \eqref{e:130426a}, we estimate
\begin{equation}
\|x-c\|\leq\|x-y_{-1}\|+\|y_{-1}-c\|\leq r + \dd = \ve.
\end{equation}
Hence, $\|a-c\|\leq \|a-x\|+\|x-c\|=d_A(x)+\|x-c\|\leq
2\|x-c\|\leq 2\ve$.
Analogously, $\|b-c\|\leq 2\ve$.
On the other hand, $a-x\in-\pn{A}{S}(a)$ and $x-b\in\pn{B}{S}(b)$.
It thus follows from the definition of the CQ-number
(see \eqref{e:CQn}) and our choice of $\ve$ that
\begin{equation}
\scal{a-x}{x-b}\leq\theta_{2\ve}\|a-x\|\cdot\|x-b\|\leq\theta\|a-x\|\cdot\|x-b\|,
\end{equation}
which is assumption~\ref{t:main1-ii} of Theorem~\ref{t:main1}.
Therefore, Theorem~\ref{t:main1} implies that
$(x_n)_\nnn$ and $(y_n)_\nnn$ converge linearly
to a point $\bar c\in A\cap B\cap\ball(y_{-1};r)$ and
\begin{equation}
(\forall\nnn)\quad\max\big\{\|x_n-\bar c\|,\|y_n-\bar c\|\big\}
\leq\tfrac{r(1+\rate)}{2}\rate^n
=\tfrac{\ve\alpha_0(1+\alpha_0)(1+\rate)}{1-\rate+2\alpha_0(1+\alpha_0)}\rate^n.
\end{equation}
We also note that $\bar c\in\ball(c;\ve)$ because
$\|\bar c-c\|\leq\|\bar c-y_{-1}\|+\|y_{-1}-c\|\leq r+\dd=\ve$.
\end{proof}

Finally, we use
Aharoni and Censor's \cite[Theorem~1]{AhaCen} to obtain
a linear convergence rate result in the convex case.

\begin{corollary}[two convex sets]
Suppose that $A$ and $B$ are convex
with $\reli A \cap \reli B \neq\varnothing$,
that $S=\aff(A\cup B)$, and
that $1>\alpha_0\geq \alpha_\infty>0$.
Then the sequences $(x_n)_\nnn$ and $(y_n)_\nnn$ generated by the MARP
converge linearly to a point in $A\cap B$.
\end{corollary}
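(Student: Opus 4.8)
The plan is to combine the local result Theorem~\ref{t:loc-viaCQ} with a purely qualitative \emph{global} convergence guarantee taken from Aharoni and Censor. The decisive observation is that, under the relative-interior hypothesis, the $(A,S,B,S)$-CQ condition holds not merely at one point but at \emph{every} point of $A\cap B$; this way, wherever the MARP iterates accumulate, the local linear analysis becomes available there.

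First I would set the stage. Since $S=\aff(A\cup B)$ is convex and $P_A(S)\subseteq A\subseteq S$, $P_B(S)\subseteq B\subseteq S$, the projection-absorbing Remark shows that $S$ is projection absorbing with respect to both $A$ and $B$; hence Proposition~\ref{p:1216a} keeps all MARP iterates in $S$, so the standing assumption \eqref{e:stand} is in force. Next comes the key lemma: the $(A,S,B,S)$-CQ condition holds at every $c\in A\cap B$. Fix $z\in\reli A\cap\reli B$ (nonempty by hypothesis) and, after translating, regard $L:=\pa A+\pa B=\pa(\aff(A\cup B))$ as a subspace. If $u\in\nc{A}{S}(c)\cap(-\nc{B}{S}(c))$, then $u\in N_A(c)$, $-u\in N_B(c)$, and $u\in L$. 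Testing the two normal-cone inequalities at the point $z$ gives
\[
\scal{u}{z-c}\leq 0\quad\text{and}\quad\scal{u}{z-c}\geq 0,
\]
so $\scal{u}{z-c}=0$. Because $z\in\reli A$, perturbing $z$ within $\pa A$ in both directions forces $\scal{u}{d}=0$ for all $d\in\pa A$, i.e.\ $u\perp\pa A$; likewise $z\in\reli B$ yields $u\perp\pa B$. Thus $u\in(\pa A+\pa B)^\perp=L^\perp$, and together with $u\in L$ this gives $u=0$. By the equivalence \eqref{e:CQ-CQn}, the limiting CQ-number associated with $(A,S,B,S)$ is therefore strictly less than $1$ at every $c\in A\cap B$.

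Third, I would invoke Aharoni and Censor's \cite[Theorem~1]{AhaCen}. The MARP alternates the relaxed (single-valued, firmly nonexpansive) operators $(1-\lambda_n)\Id+\lambda_n P_A$ and $(1-\mu_n)\Id+\mu_n P_B$, whose relaxation parameters lie in the fixed interval $[\alpha_\infty,\alpha_0]\subset\left]0,2\right[$, bounded away from both endpoints since $1>\alpha_0\geq\alpha_\infty>0$; and $A\cap B\supseteq\reli A\cap\reli B\neq\varnothing$. Their theorem then guarantees that, from any starting point $y_{-1}$, the sequences converge to some $\bar c\in A\cap B$. Finally I would upgrade this to a \emph{linear} rate by a restart: CQ holds at $\bar c$, so Theorem~\ref{t:loc-viaCQ} furnishes $\dd>0$ such that MARP started in $S\cap\ball(\bar c;\dd)$ converges linearly with rate $\rate$. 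Since $y_n\to\bar c$ and $y_n\in S$, pick $n_0$ with $y_{n_0-1}\in S\cap\ball(\bar c;\dd)$. The shifted sequences, driven by $(\lambda_{n_0+k})_\kkk$ and $(\mu_{n_0+k})_\kkk$, still satisfy $1>\alpha_0\geq\alpha_\infty>0$ and have a rate constant no larger than $\rate$, so Theorem~\ref{t:loc-viaCQ} applies to this tail and yields linear convergence to $\bar c$ with rate $\rate$. Linear convergence of a tail gives linear convergence of the whole sequence after enlarging the constant $M$ (cf.\ the footnote to Definition~\ref{d:linCon}).

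I expect the main obstacle to be precisely the worry that the global limit $\bar c$ produced by Aharoni--Censor might lie on the boundary of $A\cap B$, where one would fear the CQ condition could degenerate. The CQ-everywhere lemma of the second paragraph is what removes this obstruction: the relative-interior point $z$ acts as a universal separator, forcing any admissible $u$ into $L\cap L^\perp=\{0\}$ regardless of where $c$ sits in $A\cap B$. The remaining steps—checking the projection-absorbing property of $S$, confirming that the shifted relaxation parameters inherit the hypotheses and do not worsen $\rate$, and passing from tail to full linear convergence—are routine.
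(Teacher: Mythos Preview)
Your proposal is correct and follows essentially the same strategy as the paper: invoke Aharoni--Censor's \cite[Theorem~1]{AhaCen} for global convergence to some $\bar c\in A\cap B$, establish the $(A,S,B,S)$-CQ condition at $\bar c$, and then restart via Theorem~\ref{t:loc-viaCQ} from an iterate $y_{n_0}\in S\cap\ball(\bar c;\dd)$. The only difference is that the paper outsources the CQ verification to \cite[Proposition~7.5]{zero}, whereas you give a direct self-contained argument (showing in fact that CQ holds at \emph{every} point of $A\cap B$); this extra generality is harmless and your proof of it is sound.
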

\begin{proof}
It is known that the MARP sequences converge to some point $c\in A \cap B$;
see, e.g., the aforementioned \cite[Theorem~1]{AhaCen}.
By \cite[Proposition~7.5]{zero}, the $(A,S,B,S)$-CQ condition holds at $c$.
In view of \eqref{e:CQ-CQn},
the limiting CQ number associated with $(A,S,B,S)$
(see Definition~\ref{d:CQn}) satisfies
\begin{equation}
\overline\theta=\max\Menge{\scal{u}{v}}{ u\in \nc{A}{S}(c), v\in
-\nc{B}{S}(c),\|u\|\leq 1, \|v\|\leq 1} < 1.
\end{equation}
Let $\theta\in\left]\overline\theta,1\right[$
and obtain $\dd>0$ as in Theorem~\ref{t:loc-viaCQ}.
Since $(x_n)_\nnn$ and $(y_n)_\nnn$ converge to $c$,
there exists $n_0\in\NN$
such that $y_{n_0}\in B(c;\dd)$.
The conclusion therefore follows from Theorem~\ref{t:loc-viaCQ}
(applied to the MARP with starting point $y_{n_0}\in S$).
\end{proof}


\section{Linear Convergence of the MARP and Regularity}
\label{s:2MARP}
\label{sec:regu}

We now investigate the MARP in the presence of regularity.
We uphold the assumptions \eqref{e:stand} of the previous
section.

The following result is a counterpart of Lemma~\ref{l:MARP};
it refines \cite[Theorem~5.2]{LLM} and \cite[Proposition~3.4]{one}.

\begin{lemma}\label{l:2MARP}
Let $\theta\in\left[0,1\right[$,
let $\dd>0$,
let $\ve\geq 0$
and let $\nnn$.
Suppose that $c\in A$,
that $A$ is $(S,\ve,2\dd)$-regular at $c$
(see Definition~\ref{d:reg}),
and that
the quadruple $(y_{n-1},x_n,y_n,x_{n+1})$ generated by the
MARP (see Definition~\ref{d:MARP}) with starting point
$y_{-1}\in S$ satisfies
\begin{equation}\label{e:2MARP01}
\{x_n,y_n\}\subseteq\ball(c;\dd)
\quad\text{and}\quad
\scal{x_{n+1}-y_n}{x_n-y_n}\leq \theta\|x_{n+1}-y_n\|\cdot\|x_n-y_n\|.
\end{equation}
Then
\begin{equation}\label{e:2MARP02}
\|x_{n+1}-y_n\|\leq
\tfrac{\lambda_{n+1}}{\lambda_n}
\big(\theta\lambda_n+2\ve+1-\lambda_n\big)
\max\big\{\|y_n-x_n\|,\|x_n-y_{n-1}\|\big\}.
\end{equation}
Furthermore, the following implication holds:
\begin{equation}
\label{e:2MARP02s}
\lambda_n=1\;\Rightarrow\;\|x_{n+1}-y_n\|\leq
\lambda_{n+1}(\theta+2\ve)\|y_n-x_n\|.
\end{equation}
\end{lemma}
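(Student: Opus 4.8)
The plan is to reduce everything to an estimate of $d_A(y_n)$. By Proposition~\ref{p:01}\ref{p:01ii} applied to the pair $(y_n,a_{n+1})$, we have $x_{n+1}-y_n=\lambda_{n+1}(a_{n+1}-y_n)$, so $\|x_{n+1}-y_n\|=\lambda_{n+1}d_A(y_n)=\lambda_{n+1}\|u\|$, where I set $u:=y_n-a_{n+1}$. Since $y_{-1}\in S$ and $S$ is projection absorbing, Proposition~\ref{p:1216a} gives $y_n\in S$; together with $a_{n+1}\in P_Ay_n$ this shows $y_n\in S\cap P_A^{-1}(a_{n+1})$, hence $u\in\pn{A}{S}(a_{n+1})$. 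The whole task is therefore to bound $\|u\|$ by the right multiple of $M:=\max\{\|y_n-x_n\|,\|x_n-y_{n-1}\|\}$.

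First I would record the two range estimates needed to feed the regularity hypothesis. Because $c\in A$ and $\{x_n,y_n\}\subseteq\ball(c;\dd)$, we have $\|a_n-x_n\|=d_A(x_n)\leq\|x_n-c\|\leq\dd$ and $\|a_{n+1}-y_n\|=d_A(y_n)\leq\|y_n-c\|\leq\dd$; hence $\|a_n-c\|\leq2\dd$ and $\|a_{n+1}-c\|\leq2\dd$. This is exactly what Definition~\ref{d:reg} requires, so the $(S,\ve,2\dd)$-regularity of $A$ at $c$, applied with $y=a_n$, $b=a_{n+1}$, and $u\in\pn{A}{S}(a_{n+1})$, yields $\scal{u}{a_n-a_{n+1}}\leq\ve\|u\|\cdot\|a_n-a_{n+1}\|$.

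The core computation expands $\|u\|^2=\scal{u}{y_n-a_{n+1}}$ through the intermediate points $x_n$ and $a_n$:
\[
\|u\|^2=\scal{u}{y_n-x_n}+\scal{u}{x_n-a_n}+\scal{u}{a_n-a_{n+1}}.
\]
I would bound the three terms in turn. The first is handled by the angle inequality in \eqref{e:2MARP01}: rewriting $x_{n+1}-y_n=-\lambda_{n+1}u$ and $x_n-y_n=-(y_n-x_n)$ and cancelling $\lambda_{n+1}$ turns it into $\scal{u}{y_n-x_n}\leq\theta\|u\|\cdot\|y_n-x_n\|$. For the second, Proposition~\ref{p:01}\ref{p:01iii} gives $\|x_n-a_n\|=\tfrac{1-\lambda_n}{\lambda_n}\|y_{n-1}-x_n\|$, so Cauchy--Schwarz yields $\scal{u}{x_n-a_n}\leq\tfrac{1-\lambda_n}{\lambda_n}\|u\|\cdot M$. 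The third is the regularity term above. The one remaining ingredient is a bound on $\|a_n-a_{n+1}\|$: since $a_{n+1}$ is a nearest point of $A$ to $y_n$ we have $\|u\|=\|y_n-a_{n+1}\|\leq\|y_n-a_n\|$, whence $\|a_n-a_{n+1}\|\leq\|a_n-y_n\|+\|u\|\leq2\|a_n-y_n\|\leq\tfrac{2}{\lambda_n}M$, using $\|a_n-y_n\|\leq\|a_n-x_n\|+\|x_n-y_n\|\leq\tfrac{1}{\lambda_n}M$. Substituting all four bounds and cancelling one factor of $\|u\|$ (the case $u=0$ being trivial) gives $\|u\|\leq\big(\theta+\tfrac{2\ve+1-\lambda_n}{\lambda_n}\big)M$, and multiplying by $\lambda_{n+1}$ produces \eqref{e:2MARP02}. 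For \eqref{e:2MARP02s} I would set $\lambda_n=1$: then $x_n=a_n$, so the second term drops out and $\|a_n-a_{n+1}\|\leq2\|y_n-x_n\|$, leaving $\|u\|\leq(\theta+2\ve)\|y_n-x_n\|$.

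The main obstacle is getting the coefficient exactly right, and this hinges on the third term: one must combine the regularity estimate with the bound $\|a_n-a_{n+1}\|\leq\tfrac{2}{\lambda_n}M$ so that the factor becomes precisely $\tfrac{2\ve}{\lambda_n}$ rather than something larger. A naive route that decomposes $a_n-a_{n+1}$ so as to reintroduce $u$ produces an unwanted $\ve\|u\|^2$ term and fails; the device of bounding $\|a_n-a_{n+1}\|$ by twice $\|a_n-y_n\|$ through the nearest-point inequality is what keeps the estimate linear in $\|u\|$ and matches the stated constant.
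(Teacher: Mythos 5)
Your proof is correct and follows essentially the same route as the paper's: the identical three-term decomposition of $\|y_n-a_{n+1}\|^2$ through $x_n$ and $a_n$, the same application of $(S,\ve,2\dd)$-regularity to the pair $(a_n,a_{n+1})$ after verifying $\|a_n-c\|\leq 2\dd$ and $\|a_{n+1}-c\|\leq 2\dd$, and the same nearest-point bound $\|a_{n+1}-a_n\|\leq 2\|a_n-y_n\|$. The only differences are cosmetic (working with $u=y_n-a_{n+1}$ rather than its negative, and substituting the maximum $M$ slightly earlier).
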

\begin{proof}
Using Proposition~\ref{p:01}\ref{p:01i}, we have
$\|a_n-c\|\leq\|x_n-a_n\|+\|x_n-c\|=d_A(x_n)+\|x_n-c\|
\leq 2\|x_n-c\|\leq 2\dd$. Moreover,
$\|a_{n+1}-c\|\leq \|a_{n+1}-y_n\|+\|y_n-c\|
=d_A(y_n)+\|y_n-c\|\leq 2\|y_n-c\|\leq 2\dd$.
Since $y_n-a_{n+1}\in\pn{A}{S}(a_{n+1})$ and
$A$ is $(S,\ve,2\dd)$-regular at $c$, we obtain
\begin{equation}\label{e:0601-1}
  \scal{a_{n+1}-y_n}{a_{n+1}-a_n}\leq\ve\|a_{n+1}-y_n\|\cdot\|a_{n+1}-a_n\|.
\end{equation}
Now
$a_{n+1}-y_n=\frac{1}{\lambda_{n+1}}(x_{n+1}-y_n)$
(by Proposition~\ref{p:01}\ref{p:01ii})
and \eqref{e:2MARP01} imply
\begin{equation}\label{e:0601-2}
  \scal{a_{n+1}-y_n}{x_n-y_n}\leq\theta\|a_{n+1}-y_n\|\cdot\|x_n-y_n\|.
\end{equation}
Adding \eqref{e:0601-1}, \eqref{e:0601-2}
and $\scal{a_{n+1}-y_n}{a_n-x_n}\leq \|a_{n+1}-y_n\|\cdot\|a_n-x_n\|$,
we obtain
\begin{equation}
\begin{aligned}
  \|a_{n+1}-y_n\|^2\leq&\ \theta\|a_{n+1}-y_n\|\cdot\|x_n-y_n\|\\
  &+\ve\|a_{n+1}-y_n\|\cdot\|a_{n+1}-a_n\| + \|a_{n+1}-y_n\|\cdot\|a_n-x_n\|;
\end{aligned}
\end{equation}
thus,
\begin{equation}\label{e:0601-3}
\|a_{n+1}-y_n\|\leq \theta\|y_n-x_n\|+ \ve\|a_{n+1}-a_n\| + \|a_n-x_n\|.
\end{equation}
Substituting
$\|a_{n+1}-a_n\|\leq\|a_{n+1}-y_n\|+\|a_n-y_n\|\leq 2\|a_n-y_n\|
\leq 2\|y_n-x_n\|+2\|x_n-a_n\|$ into \eqref{e:0601-3}
results in
\begin{equation}
  \|a_{n+1}-y_n\|\leq (\theta+2\ve)\|y_n-x_n\|+ (1+2\ve)\|a_n-x_n\|.
\end{equation}
Therefore, since
$\|a_n-x_n\|=\frac{1-\lambda_n}{\lambda_n}\|x_n-y_{n-1}\|$ and $\|a_{n+1}-y_n\|=\frac{1}{\lambda_{n+1}}\|x_{n+1}-y_n\|$
by Proposition~\ref{p:01}\ref{p:01iii}\&\ref{p:01ii},
we obtain
\begin{subequations}
\begin{align}
\|x_{n+1}-y_n\|&\leq \tfrac{\lambda_{n+1}}{\lambda_n}\big((\theta+2\ve)\lambda_n\|y_n-x_n\|+ (1+2\ve)(1-\lambda_n)\|x_n-y_{n-1}\|\big)
\label{e:2MARP03a}\\
&\leq
\tfrac{\lambda_{n+1}}{\lambda_n}\big((\theta+2\ve)\lambda_n+
(1+2\ve)(1-\lambda_n)\big)\max\big\{\|y_n-x_n\|,\|x_n-y_{n-1}\|\big\}\\
&=\tfrac{\lambda_{n+1}}{\lambda_n}\big(\theta\lambda_n+2\ve+1-\lambda_n
\big) \max\big\{\|y_n-x_n\|,\|x_n-y_{n-1}\|\big\},
\end{align}
\end{subequations}
which is \eqref{e:2MARP02}, as announced.
Finally, \eqref{e:2MARP02s} follows from \eqref{e:2MARP03a}.
\end{proof}

Analogously to the proof of Lemma~\ref{l:2MARP}, we obtain
the following result.

\begin{lemma}\label{l:2MARP'}
Let $\theta\in\left[0,1\right[$,
let $\dd>0$,
let $\ve\geq 0$
and let $\nnn$.
Suppose that $c\in B$,
that $B$ is $(S,\ve,2\dd)$-regular at $c$
(see Definition~\ref{d:reg}),
and that
the quadruple $(x_{n},y_n,x_{n+1},y_{n+1})$ generated by the
MARP (see Definition~\ref{d:MARP}) with starting point
$y_{-1}\in S$ satisfies
\begin{equation}
\{y_n,x_{n+1}\}\subseteq\ball(c;\dd)
\quad\text{and}\quad
\scal{y_{n+1}-x_{n+1}}{y_n-x_{n+1}}\leq \theta\|y_{n+1}-x_{n+1}\|
\cdot\|y_n-x_{n+1}\|.
\end{equation}
Then
\begin{equation}
\|y_{n+1}-x_{n+1}\|\leq
\tfrac{\mu_{n+1}}{\mu_n}
\big(\theta\mu_n+2\ve+1-\mu_n\big)
\max\big\{\|x_{n+1}-y_n\|,\|y_n-x_{n}\|\big\}.
\end{equation}
Furthermore, the following implication holds:
\begin{equation}
\label{e:2MARP02s'}
\mu_n=1\;\Rightarrow\;\|y_{n+1}-x_{n+1}\|\leq
\mu_{n+1}(\theta+2\ve)\|x_{n+1}-y_n\|.
\end{equation}
\end{lemma}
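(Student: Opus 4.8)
The plan is to mirror the proof of Lemma~\ref{l:2MARP} verbatim, interchanging the roles of $A$ and $B$ (and correspondingly of $\blambda$ and $\bmu$, of the $a$-points and the $b$-points, and of the two half-steps of each MARP cycle). First I would introduce the two $B$-projections that drive the $B$-half-steps: by Proposition~\ref{p:01}\ref{p:01i} one has $b_n=P_B(y_n)$ and $b_{n+1}=P_B(x_{n+1})$. Using $\|y_n-b_n\|=d_B(y_n)$ together with $c\in B$ and the hypothesis $\{y_n,x_{n+1}\}\subseteq\ball(c;\dd)$, I would estimate $\|b_n-c\|\leq d_B(y_n)+\|y_n-c\|\leq 2\|y_n-c\|\leq 2\dd$, and likewise $\|b_{n+1}-c\|\leq 2\dd$.

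Next, since the standing assumptions \eqref{e:stand} and Proposition~\ref{p:1216a} place $x_{n+1}$ in $S$, we have $x_{n+1}-b_{n+1}\in\pn{B}{S}(b_{n+1})$. Then $(S,\ve,2\dd)$-regularity of $B$ at $c$ (Definition~\ref{d:reg}), applied to the pair $(b_n,b_{n+1})\in B\times B$, yields $\scal{b_{n+1}-x_{n+1}}{b_{n+1}-b_n}\leq\ve\|b_{n+1}-x_{n+1}\|\cdot\|b_{n+1}-b_n\|$. Combining the hypothesized angle inequality with $y_{n+1}-x_{n+1}=\mu_{n+1}(b_{n+1}-x_{n+1})$ (Proposition~\ref{p:01}\ref{p:01ii}) gives $\scal{b_{n+1}-x_{n+1}}{y_n-x_{n+1}}\leq\theta\|b_{n+1}-x_{n+1}\|\cdot\|y_n-x_{n+1}\|$. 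Adding these two inequalities to the Cauchy--Schwarz bound $\scal{b_{n+1}-x_{n+1}}{b_n-y_n}\leq\|b_{n+1}-x_{n+1}\|\cdot\|b_n-y_n\|$, I would note that the three second-argument vectors telescope, $(b_{n+1}-b_n)+(b_n-y_n)+(y_n-x_{n+1})=b_{n+1}-x_{n+1}$, so the left-hand sides sum to $\|b_{n+1}-x_{n+1}\|^2$; dividing by $\|b_{n+1}-x_{n+1}\|$ leaves $\|b_{n+1}-x_{n+1}\|\leq\theta\|x_{n+1}-y_n\|+\ve\|b_{n+1}-b_n\|+\|b_n-y_n\|$.

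To finish, I would bound $\|b_{n+1}-b_n\|\leq\|b_{n+1}-x_{n+1}\|+\|x_{n+1}-b_n\|\leq 2\|x_{n+1}-b_n\|\leq 2\|x_{n+1}-y_n\|+2\|y_n-b_n\|$, where the middle step uses $\|b_{n+1}-x_{n+1}\|=d_B(x_{n+1})\leq\|b_n-x_{n+1}\|$. Substituting gives $\|b_{n+1}-x_{n+1}\|\leq(\theta+2\ve)\|x_{n+1}-y_n\|+(1+2\ve)\|b_n-y_n\|$. Converting to MARP quantities via Proposition~\ref{p:01}\ref{p:01iii}\&\ref{p:01ii} --- namely $\|y_n-b_n\|=\tfrac{1-\mu_n}{\mu_n}\|x_n-y_n\|$ and $\|y_{n+1}-x_{n+1}\|=\mu_{n+1}\|b_{n+1}-x_{n+1}\|$ --- and bounding by the maximum yields $\tfrac{\mu_{n+1}}{\mu_n}\big((\theta+2\ve)\mu_n+(1+2\ve)(1-\mu_n)\big)\max\{\|x_{n+1}-y_n\|,\|y_n-x_n\|\}=\tfrac{\mu_{n+1}}{\mu_n}(\theta\mu_n+2\ve+1-\mu_n)\max\{\|x_{n+1}-y_n\|,\|y_n-x_n\|\}$, which is the assertion. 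The implication \eqref{e:2MARP02s'} drops out of the pre-maximum inequality, since the coefficient $(1+2\ve)(1-\mu_n)$ of $\|x_n-y_n\|$ vanishes when $\mu_n=1$.

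The main obstacle here is bookkeeping rather than any genuine mathematical difficulty: one must orient the three inner-product vectors so that their sum telescopes \emph{exactly} to $b_{n+1}-x_{n+1}$, and must apply Proposition~\ref{p:01} with the correct half-step parameters ($\mu_n$ for $x_n\mapsto y_n$ and $\mu_{n+1}$ for $x_{n+1}\mapsto y_{n+1}$). The only structural point requiring care --- and it is inherited directly from the standing hypotheses --- is that $x_{n+1}\in S$, which is what legitimizes the membership $x_{n+1}-b_{n+1}\in\pn{B}{S}(b_{n+1})$ needed to invoke regularity.
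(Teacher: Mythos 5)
Your proof is correct and is exactly the argument the paper intends: the paper gives no separate proof of Lemma~\ref{l:2MARP'}, stating only that it is obtained analogously to Lemma~\ref{l:2MARP} by interchanging the roles of $A$ and $B$, and your write-up carries out precisely that mirroring (regularity applied to the pair $(b_n,b_{n+1})$ with $u=x_{n+1}-b_{n+1}\in\pn{B}{S}(b_{n+1})$, the three-term telescoping to $\|b_{n+1}-x_{n+1}\|^2$, and the conversion via Proposition~\ref{p:01}). The only nitpick is attributional: $b_{n+1}\in P_B(x_{n+1})$ holds directly from Definition~\ref{d:MARP} rather than from Proposition~\ref{p:01}\ref{p:01i}, but nothing in your argument depends on this.
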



The next result will be useful later in this section.

\begin{lemma}
\label{l:rts}
Assume that $\alpha_\infty > 0$ and
let $\ve\in\RPP$ and $\theta\in\left[0,1\right[$ be such that
$(1-\theta)\alpha_\infty > 2\ve$.
Then
\begin{subequations}
\label{e:rts}
\begin{align}
0&<\rts:=\sup_{\nnn}
\Big\{
\tfrac{\lambda_{n+1}}{\lambda_n}\big(\theta\lambda_n+2\ve+1-\lambda_n\big),
\tfrac{\mu_{n+1}}{\mu_n}\big(\theta\mu_n+2\ve+1-\mu_n\big)
\Big\}\\
&\leq 1-\big((1-\theta)\alpha_\infty-2\ve\big)<1.
\end{align}
\end{subequations}
\end{lemma}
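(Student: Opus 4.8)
The plan is to treat the two families symmetrically and to bound each of the two expressions appearing in the supremum \eqref{e:rts} from above by $1-\big((1-\theta)\alpha_\infty-2\ve\big)$, while separately checking strict positivity; taking the supremum over $\nnn$ and over both families then yields the two displayed inequalities at once. By the symmetry between $\blambda$ and $\bmu$ in \eqref{e:rts}, it suffices to analyze the $\bmu$-term $g_n:=\tfrac{\mu_{n+1}}{\mu_n}\big(\theta\mu_n+2\ve+1-\mu_n\big)$, since the $\blambda$-term is handled identically.

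First I would rewrite the second factor as $\theta\mu_n+2\ve+1-\mu_n = 1-(1-\theta)\mu_n+2\ve$. Because $\mu_n\in\left]0,1\right]$ and $1-\theta\geq 0$, we have $0\leq(1-\theta)\mu_n\leq 1$, so this factor lies in $\left[2\ve,\,1+2\ve\right]$; in particular it is nonnegative and bounded below by $2\ve>0$. Next I would invoke the monotonicity built into \eqref{e:stand}: since $(\mu_n)_\nnn$ is nonincreasing and positive, $0<\tfrac{\mu_{n+1}}{\mu_n}\leq 1$. Multiplying the \emph{nonnegative} factor by this ratio can only decrease it, whence $g_n\leq 1-(1-\theta)\mu_n+2\ve$.

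Now I would use that $(\mu_n)_\nnn$ decreases to $\mu_\infty$, so $\mu_n\geq\mu_\infty\geq\alpha_\infty$, giving $(1-\theta)\mu_n\geq(1-\theta)\alpha_\infty$ and therefore $g_n\leq 1-(1-\theta)\alpha_\infty+2\ve = 1-\big((1-\theta)\alpha_\infty-2\ve\big)$. For positivity, the same strictly positive ratio multiplied by the factor (which is at least $2\ve$) yields $g_n\geq\tfrac{\mu_{n+1}}{\mu_n}\,2\ve>0$. Running the identical argument for the $\blambda$-term (now using $\lambda_n\geq\lambda_\infty\geq\alpha_\infty$) and taking the supremum over $\nnn$ and over the two families gives $0<\rts\leq 1-\big((1-\theta)\alpha_\infty-2\ve\big)$; the hypothesis $(1-\theta)\alpha_\infty>2\ve$ then forces the right-hand side to be strictly less than $1$, and the upper bound also shows the supremum is finite.

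There is essentially no deep obstacle here, as the lemma is a routine estimate. The one point that genuinely requires care is verifying that the factor $1-(1-\theta)\mu_n+2\ve$ is nonnegative \emph{before} using $\tfrac{\mu_{n+1}}{\mu_n}\leq 1$ to bound the product from above: this step is valid precisely because of the constraints $\mu_n\leq 1$ and $\theta<1$, and it is the only place where the hypotheses on $\theta$ and the monotonicity of the relaxation parameters are truly used.
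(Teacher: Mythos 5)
Your proof is correct and follows essentially the same route as the paper: rewrite the second factor as $1-(1-\theta)\mu_n+2\ve$, bound it using $\mu_n\geq\mu_\infty\geq\alpha_\infty$ together with $\theta-1<0$, and drop the ratio $\tfrac{\mu_{n+1}}{\mu_n}\leq 1$. The paper's version is terser (it does not spell out the nonnegativity of the factor before discarding the ratio), so your extra care there is a welcome but inessential refinement.
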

\begin{proof}
Clearly, $0<\rts$.
Since $\theta-1<0$, we obtain
\begin{equation}
(\forall\nnn)\quad
\theta\lambda_n+2\ve+1-\lambda_n=(\theta-1)\lambda_n+1+2\ve
\leq(\theta-1)\alpha_\infty+1+2\ve
\end{equation}
and $\theta\mu_n+2\ve+1-\mu_n\leq(\theta-1)\alpha_\infty+1+2\ve$.
Therefore, $\rts\leq(\theta-1)\alpha_\infty+1+2\ve<1$.
\end{proof}


The proof of the following result is partially similar to that of
Theorem~\ref{t:main1};
however, the linear rates of convergence obtained are different.

\begin{theorem}[MARP with regularity of sets]\label{t:main2}
Let $\ve\geq 0$, $\dd>0$, and
$\theta\in\left[0,1-2\ve\right[$.
Assume that the following hold:
\begin{enumerate}
\item
\label{t:2MARP-i}
$A$ and $B$ are $(S,\ve,2\dd)$-regular at $c \in A\cap B$;
\item
\label{t:2MARP-ii}
$\displaystyle \big(\forall x\in S \cap\ball(c;\dd)\big)\big(\forall a\in P_Ax\big)\big(\forall b\in
P_Bx\big)\quad
\scal{a-x}{x-b}\leq\theta\|a-x\|\cdot\|x-b\|$;
\item
\label{t:2MARP-iii}
$\alpha_\infty > 2\ve/(1-\theta)\geq 0$.
\end{enumerate}
Assume also that the starting point $y_{-1}$ of the
MARP sequences
$(x_n)_\nnn$ and $(y_n)_\nnn$ satisfies
\begin{equation}\label{e:2m2}
y_{-1}\in S\quad\text{and}\quad
\|y_{-1}-c\|\leq
\frac{\dd(1-\rts)}{1-\rts+2\alpha_0(1+\alpha_0)}\ ,
\end{equation}
where $\rts\in\zeroun$ is as in \eqref{e:rts}.
Then $(x_n)_\nnn$ and $(y_n)_\nnn$ converge linearly
to a point $\bar c\in A\cap B\cap\ball(c;\dd)$ with rate $\rts$;
indeed,
\begin{equation}
(\forall\nnn)\quad
\max\big\{\|x_n-\bar c\|,\|y_n-\bar c\|\big\}
  \leq\frac{\dd\alpha_0(1+\alpha_0)(1+\rts)}
  {1-\rts+2\alpha_0(1+\alpha_0)}\rts^n.
\end{equation}
Furthermore, if $(\forall\nnn)$ $\lambda_n=\mu_n=1$,
then $(x_n)_\nnn$ and $(y_n)_\nnn$ converge linearly with
rate $\rts^2=(\theta+2\ve)^2$:
\begin{equation}
(\forall\nnn)\quad\max\big\{\|x_n-\bar c\|,\|y_n-\bar c\|\big\}\leq
\frac{2\dd(1+\rts^2)}{(1+\rts)(5-\rts)}\rts^{2n}.
\end{equation}
\end{theorem}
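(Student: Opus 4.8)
The plan is to follow the architecture of the proofs of Theorem~\ref{t:main1} and Theorem~\ref{t:loc-viaCQ}, but to replace the one-step estimate of Lemma~\ref{l:MARP} by its regularity-based counterparts Lemma~\ref{l:2MARP} and Lemma~\ref{l:2MARP'}, so that the governing contraction factor becomes $\rts$ of \eqref{e:rts} rather than $\rate$ of Lemma~\ref{l:0611}. First I would record that $(x_n)_\nnn$ and $(y_n)_\nnn$ stay in $S$ (Proposition~\ref{p:1216a}), and that assumption~\ref{t:2MARP-iii}, via Lemma~\ref{l:rts}, guarantees $\rts\in\zeroun$. Mirroring Theorem~\ref{t:loc-viaCQ}, I would set $r:=\frac{2\dd\alpha_0(1+\alpha_0)}{1-\rts+2\alpha_0(1+\alpha_0)}$, so that the starting-point bound \eqref{e:2m2} rewrites as $\|y_{-1}-c\|\leq\frac{r(1-\rts)}{2\alpha_0(1+\alpha_0)}$ and $r+\|y_{-1}-c\|\leq\dd$; consequently every point of $\ball(y_{-1};r)$ lies in $\ball(c;\dd)$, the radius on which assumptions~\ref{t:2MARP-i} and \ref{t:2MARP-ii} are available.

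The core step is to establish that $(x_n)_\nnn$ and $(y_n)_{n\geq-1}$ enjoy the alternating contraction property at $y_{-1}$ with parameters $(r,\rts)$ (Definition~\ref{d:contr}), after which Theorem~\ref{t:geo} applies: Lemma~\ref{l:first} together with \eqref{e:2m2} yields $M:=\max\{\|y_0-x_0\|,\|x_0-y_{-1}\|\}\leq\alpha_0(1+\alpha_0)\|y_{-1}-c\|\leq\frac{r(1-\rts)}{2}$, which is hypothesis \eqref{e:geo1}. To verify the contraction property for the quadruple $(y_{n-1},x_n,y_n,x_{n+1})$ under the standing assumption $\max\{\|x_n-y_{-1}\|,\|y_n-y_{-1}\|\}\leq r$, I would first note $\{x_n,y_n\}\subseteq\ball(c;\dd)$ and then verify the angle hypothesis \eqref{e:2MARP01} of Lemma~\ref{l:2MARP}. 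Here Proposition~\ref{p:01}\ref{p:01i} gives $b_n\in P_B y_n$ while $a_{n+1}\in P_A y_n$, so assumption~\ref{t:2MARP-ii} applied at $x=y_n$ bounds $\scal{a_{n+1}-y_n}{y_n-b_n}$; since $x_{n+1}-y_n=\lambda_{n+1}(a_{n+1}-y_n)$ and $x_n-y_n$ is a positive multiple of $y_n-b_n$ (Proposition~\ref{p:01}\ref{p:01ii}), this transfers to $\scal{x_{n+1}-y_n}{x_n-y_n}\leq\theta\|x_{n+1}-y_n\|\cdot\|x_n-y_n\|$. Lemma~\ref{l:2MARP} and the definition \eqref{e:rts} of $\rts$ then give $\|x_{n+1}-y_n\|\leq\rts\max\{\|y_n-x_n\|,\|x_n-y_{n-1}\|\}$. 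The quadruple $(x_n,y_n,x_{n+1},y_{n+1})$ is handled symmetrically through Lemma~\ref{l:2MARP'} and assumption~\ref{t:2MARP-ii} at $x=x_{n+1}$. Theorem~\ref{t:geo} now produces the limit $\bar c\in\ball(y_{-1};r)\subseteq\ball(c;\dd)$ with the stated geometric bound (after substituting the value of $r$), and Proposition~\ref{p:0602a}, which applies since assumption~\ref{t:2MARP-iii} forces $\alpha_\infty>0$, places $\bar c$ in $A\cap B$.

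For the final, unrelaxed assertion I would specialize to $\lambda_n=\mu_n=1$, where \eqref{e:rts} collapses to $\rts=\theta+2\ve$ and $\alpha_0=1$. Now the sharpened implications \eqref{e:2MARP02s} and \eqref{e:2MARP02s'} give the half-step contractions $\|x_{n+1}-y_n\|\leq\rts\|y_n-x_n\|$ and $\|y_{n+1}-x_{n+1}\|\leq\rts\|x_{n+1}-y_n\|$, so consecutive gaps decay by a factor $\rts$ at each half-step and hence by $\rts^2$ over a full cycle. Re-running the abstract estimate of Lemma~\ref{l:geo} with ratio $\rts^2$ in place of $\rts$ then delivers the finer bound with constant $\frac{2\dd(1+\rts^2)}{(1+\rts)(5-\rts)}$, since $1-\rts+2\alpha_0(1+\alpha_0)=5-\rts$ when $\alpha_0=1$.

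The hard part will be verifying the angle hypothesis \eqref{e:2MARP01} (and its primed analogue) from assumption~\ref{t:2MARP-ii}. The collinearity reduction via Proposition~\ref{p:01} is transparent exactly when the step is genuinely relaxed ($0<\mu_n<1$, resp.\ $0<\lambda_{n+1}<1$): then the intermediate iterate $y_n$ (resp.\ $x_{n+1}$) lies off the constraint sets and simultaneously realizes both projections $a_{n+1},b_n$ (resp.\ $a_{n+1},b_{n+1}$), so assumption~\ref{t:2MARP-ii} at that iterate yields precisely the required inner-product bound. At an unrelaxed step the intermediate iterate is a set point, at which assumption~\ref{t:2MARP-ii} degenerates to a triviality; the pure MAP regime of the final claim is entirely of this type, and reconciling it with the required angle bound so that \eqref{e:2MARP02s}/\eqref{e:2MARP02s'} become usable is the step demanding the most care. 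Managing the collinearity bookkeeping and the induced constants uniformly across relaxed and unrelaxed steps is where the delicacy lies.
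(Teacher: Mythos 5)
Your proposal reproduces the paper's own proof essentially step for step: the same radius $r=\frac{2\dd\alpha_0(1+\alpha_0)}{1-\rts+2\alpha_0(1+\alpha_0)}$, the same verification of the alternating contraction property with parameters $(r,\rts)$ by feeding assumption~\ref{t:2MARP-ii} (applied at $y_n$, resp.\ $x_{n+1}$) through the collinearity relations of Proposition~\ref{p:01} into Lemmas~\ref{l:2MARP} and \ref{l:2MARP'}, the same use of Lemma~\ref{l:first}, Theorem~\ref{t:geo} and Proposition~\ref{p:0602a}, and the same specialization via \eqref{e:2MARP02s}--\eqref{e:2MARP02s'} to get the rate $\rts^2$ in the unrelaxed case. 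The one step you flag as delicate --- obtaining the angle hypothesis \eqref{e:2MARP01} at an unrelaxed step --- is exactly where the paper is also at its thinnest: it passes from assumption~\ref{t:2MARP-ii} to \eqref{e:2m5} by cancelling the factor $\tfrac{1-\mu_n}{\mu_n}$ in $y_n-b_n=\tfrac{1-\mu_n}{\mu_n}(x_n-y_n)$, which is only legitimate when $\mu_n<1$, yet \eqref{e:2m5} is reused in the pure-MAP ``Furthermore'' part, so your caution identifies a genuine subtlety rather than a defect of your plan relative to the paper's.
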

\begin{proof}
Set
\begin{equation}
\label{e:130427a}
r:=\frac{2\dd\alpha_0(1+\alpha_0)}{1- \rts+2\alpha_0(1+\alpha_0)}.
\end{equation}
We claim that
\begin{equation}
\label{e:130427b}
\text{$(x_n)_\nnn$ and $(y_n)_{\nnn}$ have
the alternating contraction property}
\end{equation}
at $y_{-1}$ with parameter $(r,\rts)$ (recall Definition~\ref{d:contr}).
Let $\nnn$ and consider first the quadruple $(y_{n-1},x_n,y_n,x_{n+1})$.
In order to prove \eqref{e:contr}, we start by assuming that
\begin{equation}
\label{e:130427d}
\max\big\{\|x_n-y_{-1}\|,\|y_n-y_{-1}\|\big\}\leq r.
\end{equation}
Then, using \eqref{e:2m2} and \eqref{e:130427a}, we obtain
\begin{equation}\label{e:2m4}
\max\big\{\|x_n-c\|,\|y_n-c\|\big\}\leq r+\|y_{-1}-c\|\leq r +
\frac{\dd(1-\rts)}{1-\rts+2\alpha_0(1+\alpha_0)}=\dd.
\end{equation}
Applying \ref{t:2MARP-ii} with
$y_n$, $a_{n+1}\in P_A y_n$, and $b_n= P_B y_n$, we see that
\begin{equation}
\scal{a_{n+1}-y_n}{y_n-b_n}\leq\theta\|a_{n+1}-y_n\|\cdot\|y_n-b_n\|.
\end{equation}
On the other hand, Proposition~\ref{p:01}\ref{p:01ii}\&\ref{p:01iii}
implies
$a_{n+1}-y_n=\frac{1}{\lambda_{n+1}}(x_{n+1}-y_n)$ and
$y_n-b_n=\frac{1-\mu_n}{\mu_n}(x_n-y_n)$.
Altogether,
\begin{equation}\label{e:2m5}
\scal{x_{n+1}-y_n}{x_n-y_n}\leq\theta\|x_{n+1}-y_n\|\cdot\|x_n-y_n\|.
\end{equation}
In view of Lemma~\ref{l:2MARP}, we now deduce
\begin{equation}\label{e:2m3}
\|x_{n+1}-y_{n}\|\leq
\rts\max\big\{\|y_n-x_n\|,\|x_n-y_{n-1}\|\big\}.
\end{equation}
This verifies \eqref{e:contr} for the quadruple
$(y_{n-1},x_n,y_n,x_{n+1})$.
The quadruple $(x_n,y_n,x_{n+1},y_{n+1})$
is treated similarly (invoke Lemma~\ref{l:2MARP'} instead of
Lemma~\ref{l:2MARP}).
Therefore, \eqref{e:130427b} holds.

Next, using inequality \eqref{e:130425d} of Lemma~\ref{l:first},
the assumption that $c\in A\cap B$ (see \ref{t:2MARP-i}),
and \eqref{e:2m2},
we obtain
\begin{subequations}
\label{e:130428a}
\begin{align}
\max\big\{\|y_0-x_0\|,\|x_0-y_{-1}\|\big\}
&\leq\alpha_0(1+\alpha_0)\max\big\{d_A(y_{-1}),d_B(y_{-1})\big\}\\
&\leq\alpha_0(1+\alpha_0)\|y_{-1}-c\|\\
&\leq\frac{\alpha_0(1+\alpha_0)\dd(1-\rts)}{1-\rts+2\alpha_0(1+\alpha_0)}\\
&=\frac{r(1-\rts)}{2}.
\end{align}
\end{subequations}
Thus, Theorem~\ref{t:geo} and \eqref{e:130427a} yield
the existence of $\bar{c}\in\ball(y_{-1};r)$ such that
\begin{equation}
(\forall\nnn)\quad\max\big\{\|x_n-\bar c\|,\|y_n-\bar c\|\big\}
\leq\frac{r(1+\rts)}{2}\rts^n
=\frac{\dd\alpha_0(1+\alpha_0)(1+\rts)}
  {1-\rts+2\alpha_0(1+\alpha_0)}\rts^n.
\end{equation}
Furthermore, Theorem~\ref{t:geo} also states
that \eqref{e:130427d} holds for every $\nnn$;
consequently, so does its consequence \eqref{e:2m4}.
Also, Proposition~\ref{p:0602a}, assumption~\ref{t:2MARP-iii}
and \eqref{e:2m4} imply that $\bar{c}\in A\cap B\cap\ball(c;\dd)$.

Finally, we additionally assume that
$(\forall\nnn)$ $\lambda_n=\mu_n=1$.
Then $\alpha_0=\alpha_\infty=1$,
$\rts=\theta+2\ve$, and $r=\frac{4\dd}{5-\rts}$.
Combining \eqref{e:2m4}, \eqref{e:2m5},
\eqref{e:2MARP02s} and
\eqref{e:2MARP02s'}
yields
\begin{equation}
(\forall\nnn)\quad
\|x_{n+1}-y_{n}\|\leq \rts\|y_n-x_n\|
\;\text{and}\;
 \|y_n-x_n\|\leq \rts\|x_n-y_{n-1}\|;
\end{equation}
consequently, $\|x_{n+1}-y_{n}\|\leq  \rts^2\|x_n-y_{n-1}\|
=\rts^2\max\{\|y_n-x_n\|,\|x_n-y_{n-1}\|\}$ and similarly
$\|y_{n+1}-x_{n+1}\|^2\leq\rts^2\max\{\|x_{n+1}-y_n\|,\|y_n-x_n\|\}$.
Thus, $(x_n)_\nnn$ and $(y_n)_\nnn$ have the contraction property at
$y_{-1}$ with parameters $(r,\rts^2)$.
Now, \eqref{e:geo1} holds with $(r,\rts^2)$ because
of \eqref{e:130428a} and
\begin{equation}\label{e:2m6}
M:=\max\big\{\|y_0-x_0\|,\|x_0-y_{-1}\|\big\}\leq
\tfrac{r(1-\rts)}{2}\leq\tfrac{r(1-\rts^2)}{2}.
\end{equation}
Hence, Theorem~\ref{t:geo} implies that $(x_n)_\nnn$ and $(y_n)_\nnn$ converge
linearly with rate $\rts^2=(\theta+2\ve)^2$; in fact,
\begin{equation}
(\forall\nnn)\quad\max\big\{\|x_n-\bar c\|,\|y_n-\bar c\|\big\}\leq
\tfrac{M(1+\rts^2)}{1-\rts^2}\rts^{2n}\leq
\tfrac{2\dd(1+\rts^2)}{(1+\rts)(5-\rts)}\rts^{2n}.
\end{equation}
This completes the proof.
\end{proof}

\begin{remark}[comparing rates in the $(S,0,2\dd)$-regular case]
Consider Theorem~\ref{t:main2} when $A$ and $B$
are $(S,0,2\dd)$-regular at $c\in A\cap B$,
and $\alpha_\infty>0$.
This happens, e.g., when $A$ and $B$ are convex.
Since $\ve=0$, we have $\theta\in\left[0,1\right[$.
Consider the function
\begin{equation}
f\colon \left]0,1\right]\to\zeroun\colon \lambda\mapsto
\frac{\theta\lambda+1-\lambda}{\sqrt{\lambda^2+(1-\lambda)^2+2\theta\lambda(1-\lambda)}}.
\end{equation}
Then
\begin{equation}
f'\colon \lambda \mapsto
\frac{-\lambda(1-\theta^2)}{\big(\lambda^2+(1-\lambda)^2+2\theta\lambda(1-\lambda)\big)^{3/2}}<0.
\end{equation}
Hence $f$ is strictly decreasing and therefore
$(\forall\nnn)$ $f(\alpha_\infty)\geq f(\lambda_\infty)\geq f(\lambda_n)$;
consequently,
\begin{subequations}
\label{e:130515c}
\begin{equation}
(\forall\nnn)\quad
\tfrac{\lambda_{n+1}}{\lambda_n}\big(\theta\lambda_n+1-\lambda_n\big)
\leq f(\alpha_\infty) \tfrac{\lambda_{n+1}}{\lambda_n}
  \sqrt{\lambda_{n}^2+(1-\lambda_n)^2+2\theta\lambda_n(1-\lambda_n)}.
\end{equation}
Similarly,
\begin{equation}
(\forall\nnn)\quad
\tfrac{\mu_{n+1}}{\mu_n}\big(\theta\mu_n+1-\mu_n\big)
\leq f(\alpha_\infty) \tfrac{\mu_{n+1}}{\mu_n}
  \sqrt{\mu_{n}^2+(1-\mu_n)^2+2\theta\mu_n(1-\mu_n)}.
\end{equation}
\end{subequations}
On the other hand,
\begin{subequations}
$\rts$ defined in \eqref{e:rts} becomes
\begin{equation}
\rts=\sup_{\nnn}
\Big\{
\tfrac{\lambda_{n+1}}{\lambda_n}\big(\theta\lambda_n+1-\lambda_n\big),
\tfrac{\mu_{n+1}}{\mu_n}\big(\theta\mu_n+1-\mu_n\big)
\Big\}
\end{equation}
while $\rate$ defined by \eqref{e:rate} satisfies
\begin{equation}
{\rate}=
\sup_{\nnn}\left\{
\begin{aligned}
&\tfrac{\lambda_{n+1}}{\lambda_n}
  \sqrt{\lambda_{n}^2+(1-\lambda_n)^2+2\theta\lambda_n(1-\lambda_n)},\\
&\tfrac{\mu_{n+1}}{\mu_n}
  \sqrt{\mu_{n}^2+(1-\mu_n)^2+2\theta\mu_n(1-\mu_n)}
\end{aligned}\right\}.
\end{equation}
\end{subequations}
Altogether,
\begin{equation}
\rts \leq f(\alpha_\infty) \rate < \rate.
\end{equation}
Therefore, the rate $\rts$ is always better than the rate
$\rate$.
\end{remark}

\begin{remark}[best bound for the convergence rate]
\label{r:bb2}
In Theorem~\ref{t:main2}, the linear rate is bounded above
by the constant $\rts^2$ defined in \eqref{e:rts}.
Again, the actual computation of $\rts$ seems to be hard in
general; however, the upper bound in Lemma~\ref{l:rts}
is minimized when $(\forall\nnn)$ $\lambda_n=\mu_n=1$,
in which case
\begin{equation}
\rts^2 = (\theta+2\ve)^2.
\end{equation}
Comparing to the best bound derived in Remark~\ref{r:bb1},
we note that
for fixed $\theta\in\left[0,1\right[$ and
for all $\ve>0$ sufficiently small
\begin{equation}
(\theta+2\ve)^2 < \theta+2\ve < \sqrt{\theta} <
\sqrt{\frac{1+\theta}{2}}.
\end{equation}
Thus, when $\theta\to 1^-$, we expect the linear rate of
convergence for the MARP to approach that of the unrelaxed MAP.
\end{remark}


\section{MARP with linearly vanishing relaxation parameters}

\label{sec:vanish}

In this section, we consider the
$(\blambda,\bmu)$-MARP sequences with linearly vanishing
relaxation parameters; specifically, we assume that
\boxedeqn{
\label{e:3stand}
\Rts := \sup_{\nnn}\left\{
\frac{\lambda_{n+1}}{\lambda_n}, \frac{\mu_{n+1}}{\mu_n}\right\}<1.
}
A concrete instance occurs when
$(\forall\nnn)$ $\lambda_n=\lambda_0\Rts^n$ and
$\mu_n=\mu_0\Rts^n$.

The following result guarantees that the MARP sequences
{\em always converge linearly and globally without any assumption
on regularity or CQ-type conditions whatsoever.}

\begin{theorem}\label{t:0224a}
The MARP sequences $(x_n)_\nnn$ and $(y_n)_\nnn$
converge linearly to some point $\bar{c}\in X$ with rate
$\Rts$; moreover,
\begin{equation}
\label{e:130428d}
(\forall\nnn)\quad\max\big\{\|x_n-\bar c\|,\|y_n-\bar c)\|\big\}
\leq \tfrac{M(1+\Rts)}{1-\Rts}\cdot \Rts^n,
\end{equation}
where $M:=\max\{\|y_0-x_0\|,\|x_0-y_{-1}\|\}$, and
\begin{equation}
\label{e:130428e}
\|\bar c -y_{-1}\|\leq \tfrac{2\alpha_0(1+\alpha_0)}{1-\Rts}
\max\big\{d_A(y_{-1}),d_B(y_{-1})\big\}.
\end{equation}
\end{theorem}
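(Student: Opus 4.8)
The plan is to reduce the whole statement to the abstract linear convergence result of Lemma~\ref{l:geo} by establishing the single decay estimate $\max\{\|y_n-x_n\|,\|x_n-y_{n-1}\|\}\leq M\Rts^n$, and then to extract \eqref{e:130428e} from a short telescoping (path-length) argument. The key observation that drives the proof is that Lemmas~\ref{l:MARP} and \ref{l:MARP'} already do all the geometric work once we take $\theta=1$: the Cauchy--Schwarz inequality makes both hypotheses \eqref{e:l-ef01} and \eqref{e:l-ef01'} hold \emph{unconditionally} with $\theta=1$, which is exactly why no CQ or regularity assumption is needed here. The crucial simplification is that at $\theta=1$ the square-root factor collapses, since $\lambda_n^2+(1-\lambda_n)^2+2\lambda_n(1-\lambda_n)=(\lambda_n+(1-\lambda_n))^2=1$, and likewise for $\mu_n$. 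Hence Lemma~\ref{l:MARP} yields $\|x_{n+1}-y_n\|\leq \tfrac{\lambda_{n+1}}{\lambda_n}\max\{\|y_n-x_n\|,\|x_n-y_{n-1}\|\}\leq\Rts\max\{\|y_n-x_n\|,\|x_n-y_{n-1}\|\}$, and Lemma~\ref{l:MARP'} yields $\|y_{n+1}-x_{n+1}\|\leq\Rts\max\{\|x_{n+1}-y_n\|,\|y_n-x_n\|\}$, where the final bounds invoke the definition \eqref{e:3stand} of $\Rts$.

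Next I would prove by induction on $\nnn$ that $\max\{\|y_n-x_n\|,\|x_n-y_{n-1}\|\}\leq M\Rts^n$. The base case $n=0$ is simply the definition of $M$. For the inductive step, the first inequality above gives $\|x_{n+1}-y_n\|\leq\Rts\cdot M\Rts^n=M\Rts^{n+1}$; since $\Rts\leq 1$ this also gives $\|x_{n+1}-y_n\|\leq M\Rts^n$, and together with the induction hypothesis $\|y_n-x_n\|\leq M\Rts^n$ the second inequality produces $\|y_{n+1}-x_{n+1}\|\leq\Rts\cdot M\Rts^n=M\Rts^{n+1}$. The one point that requires a little attention is the interleaving: the estimate for $\|y_{n+1}-x_{n+1}\|$ refers back to $\|x_{n+1}-y_n\|$ rather than to step-$n$ data, but this is harmless precisely because $\|x_{n+1}-y_n\|$ has just been bounded by $M\Rts^{n+1}$. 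With the decay estimate in hand, Lemma~\ref{l:geo} (applied with $\rho=\Rts\in\left[0,1\right[$) immediately delivers the limit $\bar c$, the linear rate $\Rts$, and the bound \eqref{e:130428d}.

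It remains to establish \eqref{e:130428e}, for which I would bound the total length of the polygonal orbit $y_{-1}\to x_0\to y_0\to x_1\to\cdots\to\bar c$. Each pair of consecutive legs satisfies $\|x_n-y_{n-1}\|+\|y_n-x_n\|\leq 2M\Rts^n$, so the triangle inequality gives $\|\bar c-y_{-1}\|\leq\sum_{\nnn}2M\Rts^n=\tfrac{2M}{1-\Rts}$. Combining this with inequality \eqref{e:130425d} of Lemma~\ref{l:first}, which provides $M\leq\alpha_0(1+\alpha_0)\max\{d_A(y_{-1}),d_B(y_{-1})\}$, yields \eqref{e:130428e} at once. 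I do not expect a genuine obstacle: the only substantive idea is the recognition that setting $\theta=1$ in the earlier lemmas is exactly what is needed, after which the ratio bound $\Rts$ emerges and the argument is a routine induction followed by an invocation of Lemma~\ref{l:geo}.
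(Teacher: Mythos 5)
Your proposal is correct and follows essentially the same route as the paper: Cauchy--Schwarz makes Lemmas~\ref{l:MARP} and \ref{l:MARP'} applicable with $\theta=1$, the square-root factor collapses to $\tfrac{\lambda_{n+1}}{\lambda_n}\leq\Rts$ (resp.\ $\tfrac{\mu_{n+1}}{\mu_n}\leq\Rts$), induction gives $\max\{\|y_n-x_n\|,\|x_n-y_{n-1}\|\}\leq M\Rts^n$, and Lemma~\ref{l:geo} finishes. The only (immaterial) difference is in deriving \eqref{e:130428e}: you telescope the full polygonal path, whereas the paper combines $\|x_0-\bar c\|\leq\tfrac{M(1+\Rts)}{1-\Rts}$ with one triangle inequality; both yield the same bound $\tfrac{2M}{1-\Rts}$.
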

\begin{proof}
Let $\nnn$.
Clearly,
$\scal{y_n-x_n}{y_{n-1}-x_n}\leq 1\cdot
\|y_n-x_n\|\cdot\|x_n-y_{n-1}\|$ because of Cauchy-Schwarz.
Lemma~\ref{l:MARP} (applied with $\theta=1$) yields
\begin{subequations}
\label{e:130428b}
\begin{align}
\|x_{n+1}-y_n\| &\leq\tfrac{\lambda_{n+1}}{\lambda_n}
\max\big\{\|y_n-x_n\|,\|x_n-y_{n-1}\|\big\}\\
&\leq \Rts\max\big\{\|y_n-x_n\|,\|x_n-y_{n-1}\|\big\}.
\end{align}
\end{subequations}
On the other hand, by using Lemma~\ref{l:MARP'}, we similarly obtain
\begin{equation}
\label{e:130428c}
\|y_{n+1}-x_{n+1}\|\leq
\Rts\max\big\{\|x_{n+1}-y_n\|,\|y_n-x_n\|\big\}.
\end{equation}
Altogether,
\begin{subequations}
\begin{align}
\max\big\{\|y_{n+1}-x_{n+1}\|,\|x_{n+1}-y_{n}\|\big\}
&\leq\Rts\max\big\{\|y_n-x_n\|,\|x_n-y_{n-1}\|\big\}\\
&\;\;\vdots \\
&\leq\Rts^{n+1}\max\big\{\|y_0-x_0\|,\|x_0-y_{-1}\|\big\}\\
&=M \Rts^{n+1}.
\end{align}
\end{subequations}
Thus
\begin{equation}\label{e:0224a1}
(\forall\nnn)\quad
\max\big\{\|y_{n}-x_{n}\|,\|x_{n}-y_{n-1}\|\big\}\leq M \Rts^{n}
\end{equation}
because \eqref{e:0224a1} holds for $n=0$ by the definition of $M$.
Therefore, by Lemma~\ref{l:geo}, there exists a point $\bar{c}\in
X$ such that
\begin{equation}
(\forall\nnn)\quad\max\big\{\|x_n-\bar c\|,\|y_n-\bar c)\|\big\}\leq
\tfrac{M(1+\Rts)}{1-\Rts}\cdot \Rts^n,
\end{equation}
i.e., \eqref{e:130428d} holds.
In particular,
$\|x_0-\bar c\|\leq \frac{M(1+\Rts)}{1-\Rts}$
and thus
\begin{equation}
\|\bar c - y_{-1}\|\leq \|x_0-y_{-1}\|+\|x_0-\bar c\|
\leq M+\tfrac{M(1+\Rts)}{1-\Rts}=\tfrac{2M}{1-\Rts}.
\end{equation}
On the other hand, Lemma~\ref{l:first} yields
$M\leq \alpha_0(1+\alpha_0)\max\{d_A(y_{-1}),d_B(y_{-1})\}$.
Altogether, we obtain \eqref{e:130428e}.
\end{proof}

\begin{remark}
It is interesting to compare the results of this section to
some of the results of previous sections.
On the one hand,
Theorem~\ref{t:0224a} yields universal and global linear
convergence; however, the location of the limit is not known to
be in the intersection $A\cap B$.
On the other hand,
Theorem~\ref{t:loc-viaCQ} and Theorem~\ref{t:main2} guarantee
linear convergence when a CQ condition or regularity holds,
respectively; nevertheless, these results are only local.
We appear to witness here an ``uncertainty principle'' which pits
quality of convergence against location of the limit.
It would be highly desirable to design hybrid methods that
guarantee global convergence to a point in the intersection
(or to prove that such an undertaking is hopeless).
\end{remark}


\section{Further Examples}

\label{sec:examples}

\begin{proposition}
\label{p:130514}
Suppose that $X=\RR$ and let $(a,b,c)\in\RPP\times\RPP\times\RMM$ satisfy
\begin{equation}
\label{e:130514a}
\max\{a,b-2a\} < |c| = -c < \sqrt{a^2 + (b-a)^2} < b.
\end{equation}
Suppose that $A=\{a,c\}$, that $B=\{b,c\}$,
that
\begin{equation}
\label{e:130514b}
(\forall\nnn)\quad \lambda_n=\mu_n = \lambda \in
\left] \frac{a+c+\sqrt{c^2-a^2}}{2a},\frac{b+c}{2a}\right[ \subset \zeroun
\end{equation}
and that $y_{-1}=0$.
Then the following hold:
\begin{enumerate}
\item
\label{p:130514i}
The MAP cycles between $a$ and $b$, and thus does not converge to a point in $A \cap B$.
\item
\label{p:130514ii}
The $\lambda$-MARP converges linearly to $c\in A\cap B$ with rate $1-\lambda$.
\end{enumerate}
\end{proposition}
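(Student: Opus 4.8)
The plan is to track the one-dimensional iterates explicitly, using that on $\RR$ a projection onto a two-point set is decided by comparison with the midpoint. Write $m_A:=\tfrac{a+c}{2}$ and $m_B:=\tfrac{b+c}{2}$. From \eqref{e:130514a} we have $c<0<a<b$ and $a<|c|=-c<b$, so $m_A<0<m_B$; moreover $P_A$ returns $c$ precisely on $\left]-\infty,m_A\right[$ and returns $a$ on $\left]m_A,+\infty\right[$, while $P_B$ returns $c$ precisely on $\left]-\infty,m_B\right[$. All of \ref{p:130514i} and \ref{p:130514ii} then reduces to deciding, at each step, on which side of a midpoint an iterate falls.

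For \ref{p:130514i} I would run the unrelaxed iteration from $y_{-1}=0$. Since $a<|c|$ we have $|0-a|<|0-c|$, so $a_0=P_A0=a$. Next, $b+c<2a$ (which is the inequality $b-2a<-c$ from \eqref{e:130514a}) rewrites as $b-a<a-c$, i.e.\ $a$ is closer to $b$ than to $c$, so $b_0=P_Ba=b$. Finally $c<0<a$ gives $b-a<b-c$, so $a_1=P_Ab=a$. A one-line induction then gives $(\forall\nnn)$ $a_n=a$ and $b_n=b$: the MAP is trapped in the $2$-cycle $\{a,b\}$. Since $A\cap B=\{c\}$ and $a,b\neq c$, it never reaches $A\cap B$.

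For \ref{p:130514ii} the first half-step is unchanged, so $a_0=a$ and $x_0=(1-\lambda)\cdot 0+\lambda a=\lambda a$. The upper bound $\lambda<\tfrac{b+c}{2a}$ says exactly $x_0=\lambda a<m_B$, whence $b_0=P_Bx_0=c$ and $y_0=(1-\lambda)x_0+\lambda c$. The crucial point is to show $a_1=P_Ay_0=c$, i.e.\ $y_0<m_A$. Writing $y_0=\lambda\big((1-\lambda)a+c\big)$ and clearing denominators, the inequality $y_0<m_A$ becomes the upward-opening quadratic inequality $a\lambda^2-(a+c)\lambda+\tfrac{a+c}{2}>0$. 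Its discriminant is $(a+c)(c-a)=c^2-a^2>0$ (positive because $a<|c|$), so its roots are $\tfrac{(a+c)\pm\sqrt{c^2-a^2}}{2a}$, and the inequality holds exactly for $\lambda$ exceeding the larger root $\tfrac{a+c+\sqrt{c^2-a^2}}{2a}$, which is the lower endpoint of the interval in \eqref{e:130514b}. Thus both endpoints acquire a clean geometric meaning: the upper one puts $x_0$ left of $m_B$, the lower one puts $y_0$ left of $m_A$.

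To finish I would run a simultaneous induction. From the base step, $b_0=c$ and $a_1=c$; assuming $a_n=b_n=c$, the relaxed-step identities (Proposition~\ref{p:01}) give $x_n-c=(1-\lambda)(y_{n-1}-c)$ and $y_n-c=(1-\lambda)(x_n-c)$, so the iterates decrease strictly through $\left]c,y_0\right]$ toward $c$. Since $\left]c,y_0\right]\subset\left]-\infty,m_A\right[$, every such point lies where both $P_A$ and $P_B$ return $c$ (the lone exception $x_0=\lambda a$ only ever needs $P_B$, and $x_0<m_B$), so the projections stay locked on $c$ and the induction closes. Unrolling the recursion yields $|x_n-c|=(1-\lambda)^{2n}(\lambda a-c)$ and $|y_n-c|=(1-\lambda)^{2n+1}(\lambda a-c)$; since $(1-\lambda)^{2n}\leq(1-\lambda)^n$, both are at most $(\lambda a-c)(1-\lambda)^n$, giving linear convergence to $c\in A\cap B$ with rate $1-\lambda$ in the sense of Definition~\ref{d:linCon}. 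I expect the only genuine obstacle to be the quadratic computation that matches the lower endpoint of the admissible $\lambda$-interval with the threshold $y_0<m_A$; the remainder is bookkeeping about which side of a midpoint each iterate lands on.
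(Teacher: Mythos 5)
Your proposal is correct and follows essentially the same route as the paper's proof: explicit tracking of the one-dimensional iterates via midpoint comparisons, the identical quadratic inequality in $\lambda$ (with discriminant $c^2-a^2$ and larger root $\tfrac{a+c+\sqrt{c^2-a^2}}{2a}$) to force $y_0<\tfrac{a+c}{2}$, and then the affine contraction $x\mapsto(1-\lambda)x+\lambda c$ once all projections lock onto $c$. The only piece the paper includes that you omit is the preliminary verification that the interval in \eqref{e:130514b} is nonempty and contained in \zeroun\ (i.e., $b+c>a+c+\sqrt{c^2-a^2}>0$ and $2a>b+c$), which follows from \eqref{e:130514a} and is worth one line.
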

\begin{proof}
On the one hand,
$c^2 < a^2 + (b-a)^2$
$\Leftrightarrow$
$(b-a)^2 > c^2-a^2$
$\Leftrightarrow$
$b-a=|b-a|>\sqrt{c^2-a^2}$
$\Leftrightarrow$
\begin{equation}
b+c>a+c+\sqrt{c^2-a^2} = \sqrt{|c|-a}\Big(\sqrt{|c|+a}-\sqrt{|c|-a}\Big)>0.
\end{equation}
On the other hand,
$2a > b+c>0$. Altogether, the interval from which $\lambda$ is drawn is
well defined and we have
\begin{equation}
\label{e:130430aa}
b+c > 2a\lambda > a+c+\sqrt{c^2-a^2}.
\end{equation}
Since $a=|a|<|c|$, it follows that $P_Ay_{-1}=P_A0=a$.
Hence
\begin{equation}
x_0 = \lambda a.
\end{equation}
Now $b-2a < |c|$
$\Leftrightarrow$
$b-a<a+|c|$
$\Leftrightarrow$
$|b-a| < a-c$
$\Leftrightarrow$
$|b-a| < |c-a|$,
so $P_Ba = b$
and obviously $P_Ab = a$. This proves \ref{p:130514i}.

Next,
$P_Bx_0 = c$
$\Leftrightarrow$
$|c-\lambda a| < |b-\lambda a|$
$\Leftrightarrow$
$\lambda a+|c| < b-\lambda a$
$\Leftrightarrow$
$2\lambda a < b - |c| = b+ c$.
By the first inequality in \eqref{e:130430aa}, $P_Bx_0=c$ and thus
$y_0 = (1-\lambda)x_0 + \lambda P_Bx_0
= (1-\lambda)\lambda a + \lambda c$, i.e.,
\begin{equation}
y_0 = (1-\lambda)\lambda a + \lambda c.
\end{equation}
We have $P_Ay_0 = c$ if and only if
$y_0 < (c+a)/2$, which is equivalent to
$2(1-\lambda)\lambda a + 2\lambda c < a+c$.
Viewed in terms of $\lambda$, this is a quadratic inequality which
holds because of the second inequality in \eqref{e:130430aa}.
It follows that $x_1 = (1-\lambda)y_0 + \lambda P_Ay_0$,
i.e.,
\begin{equation}
x_1 = (1-\lambda)\big( (1-\lambda)\lambda a + \lambda c\big)+\lambda c.
\end{equation}
Furthermore, $x_0-x_1 = \lambda a(2-\lambda)(\lambda - c/a)>0$
and so $x_1<x_0$. Since already $P_Bx_0=c$, it follows that $P_Bx_1=c$ and
therefore
\begin{equation}
y_1 = (1-\lambda)x_1+\lambda c.
\end{equation}
Thus, $(\forall n\in\{2,3,\ldots\})$
$x_n = Ty_{n-1}$ and $y_n=Tx_n$, where
$T \colon x\mapsto (1-\lambda)x + \lambda c$ is $(1-\lambda)$-Lipschitz
continuous with unique fixed point $c$.
This completes the proof of \ref{p:130514ii}.
\end{proof}

\begin{example}[Examples \ref{ex:130430a} and
\ref{ex:130430b} revisited]
\label{ex:130430c}
Consider Proposition~\ref{p:130514} with
$c = -3 < a = 2 < b = 6$.
Then
$\max\{a,b-2a\}=\max\{2,6-2\cdot 2\} = \max\{2,2\} = 2 < 3 = -c
< 4.47 \approx \sqrt{20} = \sqrt{2^2+(6-2)^2}= \sqrt{a^2+(b-a)^2} < 6=b$,
and therefore \eqref{e:130514a} holds.
We have
$b+c = 3$, $2a = 4$,
and $a+c+\sqrt{c^2-a^2} = -1+\sqrt{9-4} = -1+\sqrt{5}\approx 1.23$.
Hence \eqref{e:130514b} turns into
\begin{equation}
\frac{\sqrt{5}-1}{4} < \lambda < \frac{3}{4};
\end{equation}
note that since $(-1+\sqrt{5})/4\approx 0.31$,
we can choose in particular $\lambda=\frac{1}{2}$.
By Proposition~\ref{p:130514}, the MAP with starting point $0$ fails while
the $\frac{1}{2}$-MARP converges linearly with rate $\frac{1}{2}$.
\end{example}

We have seen in the last example that MAP can fail to find a solution
while MARP is able to solve the the problem.
On the other hand, MAP can be faster than MARP:

\begin{example}[MARP and nonsummable relaxation parameters]
\label{ex:1}
Suppose that $X=\RR^2$,
that $A= \RR\times\{0\}$, and that $B=\{0\}\times\RR$.
Then $A\cap B=\{(0,0)\}$.
On the one hand,
regardless of the location of $y_{-1}$, the sequences for the $(1,1)$-MARP,
i.e., MAP, satisfy
$y_0=x_1=y_1=\cdots = 0\in A \cap B$ and thus convergence occurs in
finitely many steps.
On the other hand, let us now consider the MARP.
Writing $y_{-1}=(\eta_1,\eta_2)$, one checks that
for every $\nnn$,
\begin{subequations}
\label{e:130515b}
\begin{equation}
\textstyle x_n=\big(\eta_1\,\prod_{i=0}^{n-1}(1-\mu_i),\eta_2\,
\prod_{i=0}^{n}(1-\lambda_i)\big)
\end{equation}
and
\begin{equation}
\textstyle y_n=\big(\eta_1\,\prod_{i=0}^{n}(1-\mu_i),\eta_2\,
\prod_{i=0}^{n}(1-\lambda_i)\big).
\end{equation}
\end{subequations}
Thus if one of the relaxation parameters encountered is one,
then we obtain finite convergence in the corresponding coordinate.
So assume that $(\forall\nnn)$ $\max\{\lambda_n,\mu_n\}<1$,
that $\eta_1\neq 0$, and that $\eta_2\neq 0$.
If $\lambda_n\to 0$ and $\mu_n\to 0$, then
(similarly to the discussion of Example~\ref{ex:130515a} or
see \cite[Proposition~2.1]{BJMAA}), we have the following
characterizations:
\begin{enumerate}
\item
$(\lim_{n\in\NN}x_{n},\lim_{n\in\NN}y_{n})=(0,0)$
$\Leftrightarrow$
$\sum_\nnn\lambda_n=\sum_\nnn\mu_n=\pinf$.
\item
$(\lim_{n\in\NN}x_{n},\lim_{n\in\NN}y_{n})\in A\smallsetminus\{(0,0)\}$ 
$\Leftrightarrow$
$\sum_\nnn\lambda_n=\pinf, \sum_\nnn\mu_n<\pinf$.
\item
$(\lim_{n\in\NN}x_{n},\lim_{n\in\NN}y_{n})\in B\smallsetminus \{(0,0)\}$ 
$\Leftrightarrow$
$\sum_\nnn\lambda_n<\pinf, \sum_\nnn\mu_n=\pinf$.
\item
$(\lim_{n\in\NN}x_{n},\lim_{n\in\NN}y_{n})\not\in (A\cup B)$ 
$\Leftrightarrow$
$\sum_\nnn\lambda_n<\pinf, \sum_\nnn\mu_n<\pinf$.
\end{enumerate}
This shows that when $\lambda_{\infty}=\mu_{\infty}=0$, 
all possibilities for $\lim_{\nnn}(x_{n},y_{n})$ occur. 
See also Examples~\ref{ex:130515c}, \ref{ex:130515a},
and \ref{ex:130515b}.
\end{example}

\begin{remark}[convergence rates: actual vs upper bounds]
In the previous sections, we have established upper bounds for the linear
convergence rates. Let us now make some comments on the tightness of these
estimates.

Consider the set up in Example~\ref{ex:1}
with $(\forall\nnn)$ $\lambda_n=\mu_n = \lambda\in\left]0,1\right]$.
Then \eqref{e:130515b} yields the \emph{actual rate}
\begin{equation}
\rate_\text{\rm actual}:=1-\lambda\in\left[0,1\right[.
\end{equation}
Let us now turn to the estimates established earlier.
On the one hand, since Corollary~\ref{c:global} holds with $\theta=0$,
we obtain from \eqref{e:rate} that
\begin{equation}
\rate=\sqrt{\lambda^2+(1-\lambda)^2} > 1-\lambda = \rate_\text{\rm actual}.
\end{equation}
On the other hand,
the assumptions of Theorem~\ref{t:main2} are satisfied with
$\ve=0$, $\dd=+\infty$, $S=X=\RR^2$, and $\theta=0$.
Thus, the upper bound computed using \eqref{e:rts} satisfies
\begin{equation}
\rts=1-\lambda=\rate_\text{\rm actual}.
\end{equation}
\end{remark}

\section{A Doubly Non-Superregular Example}

\label{sec:doubly}

In this final section, we assume that $X=\RR^2$.
We shall construct $A$ and $B$ exhibiting various intriguing properties.
We shall use tools from Euclidean geometry.
Given $(s,u,v)\in X^3$, we denote the signed angle from the ray $\RP\times\{0\}$
to $u$ by $\widehat{u}$;
furthermore, $\widehat{u s v}=\widehat{v s u}$ stands for the usual (nonsigned) angle
at the point $s$.

\subsection{The Set Up}

We assume that
\begin{equation}
4w \in \left[0,\tfrac{\pi}{2}\right]
\quad\text{and}\quad
\cos(4w) = \tfrac{3}{4},
\end{equation}
so $w\approx 0.18$. Define
\begin{equation}
f\colon\RR\to\RR\colon
x\mapsto
\begin{cases}0, &\text{if
$x\in\left]-\infty,0\right]\cup\left]1,+\infty\right[$;}\\
(\tan w)\big(x-\tfrac{1}{2^k}\big), &\text{if $x\in\big]\tfrac{3}{2\cdot
2^{k+1}},\tfrac{1}{2^k}\big]$ and $\kkk$;}\\
-(\tan w)\big(x-\tfrac{1}{2^{k+1}}\big),&\text{if
$x\in\big]\tfrac{1}{2^{k+1}},\tfrac{3}{{2\cdot2^{k+1}}}\big]$ and $\kkk$.}
\end{cases}
\end{equation}
Moreover, denote by $\Phi:\RR^2\to\RR^2$ the reflector with respect
to the line $y=(\tan2w)x$.
Now we assume that
(see Figure~\ref{pic:1})
\begin{equation}
A = \menge{(x,y)\in \RR^2}{y\leq f(x)},
\;\;
B = \Phi(A),
\;\;\text{and}\;\;
c = (0,0)\in A \cap B,
\end{equation}
and we also set
\begin{equation}
(\forall\kkk)\quad
s_k:=(\tfrac{1}{2^k},0)
\;\;\text{and}\;\;
z_k:=\Phi(s_k).
\end{equation}

\firstPic

\subsection{The Normal Cones}

\label{ss:ex:last-i}

Note that
\begin{equation}
\label{e:0102NA}
(\forall a\in A)(\forall k\in\{1,2,\ldots\}) \quad N_A(a)\subseteq N_A(s_k):=
\menge{u\in\RR^2}{\tfrac{\pi}{2}-w\leq\widehat{u}\leq\tfrac{\pi}{2}+w}.
\end{equation}
Let $A'$ be the reflection of $A$ about $\RR\times\{0\}$.
Then, since $\widehat{z_0 c s_0}=4w$ and
$B$ is obtained by rotating $A'$ by the angle $4w$ about the origin $c$,
it follows that
\begin{equation}
\label{e:0102NA'}
(\forall a'\in A')\quad N_{A'}(a')\subseteq N_{A'}(s_k)=-N_{A}(s_k)=
\menge{u\in\RR^2}{-\tfrac{\pi}{2}-w\leq\widehat{u}\leq-\tfrac{\pi}{2}+w}
\end{equation}
and
\begin{equation}
\label{e:0102NB}
(\forall b\in B)\quad N_B(b)\subseteq
\menge{u\in\RR^2}{-\tfrac{\pi}{2}+3w\leq\widehat{u}\leq-\tfrac{\pi}{2}+5w}.
\end{equation}

\subsection{The CQ-number at \ $c$ \ associated with \ $(A,\bd A, B,\bd B)$}

\label{ss:ex:last-ii}

Let $\dd>0$.
Then for every $k\in\{1,2,\ldots\}$,
the closed region $W$ (see Figure~\ref{pic:2})
is a subset of $P^{-1}_A(s_k)$;
thus,
\secondPic
\begin{equation}
\pn{A}{\bd B}(s_k)=\cone\big((P^{-1}_A(s_k)\cap\bd B) -s_k\big)
= \cone(W-s_k)=N_A(s_k)
\end{equation}
and
\begin{equation}
\pn{B}{\bd A}(z_k)= N_B(z_k).
\end{equation}
We now compute the CQ-number at $c$ associated with $(A,\bd A,B,\bd B)$ and $\dd$.
Since for every $k\in\{1,2,\ldots\}$, the normal cones
$\pn{A}{\bd B}(s_k)$ and $\pn{B}{\bd A}(z_k)$
are the largest possible,
it suffices in \eqref{e:CQn} to take the supremum over the points
$B(c;\dd)\cap\menge{s_k}{k\in\{1,2,\ldots\}}$ and
$B(c;\dd)\cap\menge{z_k}{k\in\{1,2,\ldots\}}$, respectively:
\begin{equation}
\theta_\dd=\sup\mmenge{\scal{u}{v}}
{\begin{aligned}
&u\in-\pn{A}{\bd{B}}(s_k),v\in \pn{B}{\bd{A}}(z_l),\|u\|\leq 1, \|v\|\leq 1,\\
&\|s_k\|\leq\dd,\|z_l\|\leq\dd, (k,l)\in\{1,2,\ldots\}^2
\end{aligned}}.
\end{equation}

It thus follows from \eqref{e:0102NA'} and \eqref{e:0102NB} that
\begin{equation}
\begin{aligned}
\theta_\dd&=\sup\mmenge{\scal{u}{v}}
{\widehat{u}\in\left[-\tfrac{\pi}{2}-w,-\tfrac{\pi}{2}+w\right]
\;\text{and}\;
\widehat{v}\in\big[-\tfrac{\pi}{2}+3w,-\tfrac{\pi}{2}+5w\big]}\\
&=\cos 2w=\sqrt{\tfrac{1+\cos4w}{2}}=\sqrt{\tfrac{7}{8}}.
\end{aligned}
\end{equation}
Therefore,
\begin{equation}\label{e:0102theta}
(\forall\dd>0)\quad
0.93<\overline{\theta} = \theta_\dd=\sqrt{\tfrac{7}{8}}<0.94.
\end{equation}

\subsection{A lower bound for $\ve$ in the $(\ve,\dd)$-regular case}

\label{ss:ex:last-iii}

Let $k\in\{1,2,\ldots\}$, and
set $d:=\|s_{k+1}-c\|$.
Then $\|z_k-c\|=\|s_k-c\|=2d$.
Now set $\beta_1:=\|z_k-s_{k+1}\|$ and $\beta_2:=\|z_k-s_k\|$.
Noticing that $\widehat{z_k c s_k}=4w$ and using the cosine theorem for the
two triangles $\triangle cz_ks_{k+1}$ and $\triangle cz_ks_k$, we have
\begin{subequations}
\begin{align}
\beta_1^2&=d^2+(2d)^2-2d(2d)(\cos 4w) =d^2+4d^2-4d^2(\tfrac{3}{4})=2d^2,\\
\beta_2^2&=(2d)^2+(2d)^2-2(2d)(2d)(\cos4w) =4d^2+4d^2-8d^2(\tfrac{3}{4})=2d^2.
\end{align}
\end{subequations}
Hence, $\beta_1=\beta_2=d\sqrt{2}$.
This also implies $P_A z_k \subseteq [s,s_k] \cup [s,s_{k+1}]$
(see Figure~\ref{pic:3}).
\thirdPic
The cosine theorem for the triangle $\triangle z_k s_k s_{k+1}$ gives
\begin{equation}
\cos\widehat{s_{k+1}z_ks_k}=\tfrac{\beta_1^2+\beta_2^2-d^2}{2\beta_1\beta_2}
=\tfrac{2d^2+2d^2-d^2}{2d\sqrt{2}d\sqrt{2}}=\tfrac{3}{4}>0.
\end{equation}
So we conclude that $\widehat{s_{k+1}z_ks_k}=4w$.
Next, since $\widehat{ss_{k+1}s_k}=\widehat{s_{k+1}s_k}s=w$, we have
$\widehat{s_{k+1}ss_k}=\pi-2w$. On the other hand,
\begin{equation}
\widehat{ss_kz_k}=w+\widehat{s_{k+1}s_kz_k}=w+\tfrac{\pi-4w}{2}=\tfrac{\pi}{2}-w.
\end{equation}
Altogether,
\begin{equation}
\widehat{s s_k z_k}=\widehat{s s_{k+1} z_k}
=\widehat{s_k s z_k}=\widehat{s_{k+1} s z_k}=\tfrac{\pi}{2}-w,
\end{equation}
i.e., we have two isosceles triangles $\triangle z_kss_{k+1}$ and $\triangle z_kss_k$.
Let $h$ and $h'$ be the two mid-points of $[s,s_k]$ and $[s,s_{k+1}]$.
Then, $P_A z_k=\{h,h'\}$.
Clearly $u:=z_k-h\in\pn{A}{\bd B}(h)$.
Noticing that $\widehat{h' h z_k}=\widehat{shz_k}-\widehat{shh'}=\tfrac{\pi}{2}-w$, we have
\begin{equation}
\scal{\tfrac{u}{\|u\|}}{\tfrac{s_{k+1}-h}{\|s_{k+1}-h\|}}=\cos\widehat{s_{k+1}hz_k}
>\cos\widehat{h' h z_k}=\cos(\tfrac{\pi}{2}-w)
=\sin w> 0.17;
\end{equation}
consequently,
\begin{equation}\label{e:0102a}
\scal{u}{s_{k+1}-h}>(0.17)\cdot\|u\|\cdot\|s_{k+1}-h\|.
\end{equation}
Now we assume that  $A$ is $(\bd B,\ve,\dd)$-regular at $c$
for some $\ve\geq 0$ and $\dd>0$.
Since $s_n\to c$ and $z_n\to c$,
eventually
all the points $s_{k+1}, s_k, z_k, h', h$ lie in $\ball(c;\dd)$.
From the above argument, we have $u\in\pn{A}{\bd B}(h)$
and
\begin{equation}
(0.17)\cdot\|u\|\cdot\|s_{k+1}-h\|<\scal{u}{s_{k+1}-h}
\leq \ve\cdot\|u\|\cdot\|s_{k+1}-h\|.
\end{equation}
Thus
\begin{equation}\label{e:0102ve}
\ve>0.17.
\end{equation}
Similarly, if $B$ is $(\bd A,\ve,\dd)$-regular, then $\ve>0.17$.

\subsection{For the MAP, \cite[Proposition~3.12]{one} is never applicable}

\label{ss:ex:last-iv}

\noindent
Consider \cite[Proposition~3.12]{one} with $(\wt{A},\wt{B})=(\bd A,\bd B)$
and $I=J$ singletons.
Clearly \cite[(51)]{one} holds.
The two assumptions of \cite[Proposition~3.12]{one} are the following:
\begin{enumerate}
\item $A$ is $(\bd B,\ve,3\dd)$-regular.
\item
The CQ-number $\theta_{3\dd}$ at $c$ associated with $(A,\bd A,B,\bd B)$ and
$3\dd$ satisfies $\theta_{3\dd}<1-2\ve$.
\end{enumerate}
Assume that (i) holds.
On the one hand, $\theta_{3\dd}>0.93$ by \eqref{e:0102theta}.
On the other hand,
$\ve>0.17$ by \eqref{e:0102ve}.
If (ii) holds, then we obtain the absurdity
$0.93<\theta_{3\dd}<1-2\ve< 0.66$.
We conclude that (i) and (ii) cannot hold concurrently,
which implies that \cite[Proposition~3.12]{one} is not applicable.

\subsection{For the MAP, \cite[Theorems~3.14 and 3.17]{one} are never applicable}

In view of \eqref{e:0102ve}, we note that
$A$ is not $(\bd B)$-superregular at $c$, and that  $B$ is not
$(\bd A)$-superregular at $c$.
Therefore, the results in \cite{LLM} are not applicable, and neither are
\cite[Theorems~3.14 and 3.17]{one} with $(\wt{A},\wt{B})=(\bd A,\bd B)$
and $I$ and $J$ singletons.

\subsection{For the MARP, we deduce convergence with a linear rate}

Indeed, suppose that $S=X$.
The $(A,X,B,X)$-CQ condition holds, and so does
the $(A,\bd A,B,\bd B)$-CQ condition.
Hence, Theorem~\ref{t:loc-viaCQ} applies and yields local convergence for
the MARP sequences.
Moreover, by \eqref{e:0102theta},
we can make $\ve$ in \eqref{e:ve-to-dd}, and hence $\dd=+\infty$,
arbitrarily large.  Thus the MARP converges with a linear rate regardless
of the starting point. Note that Corollary~\ref{c:global} also
yields the global convergence result.

The figures suggest that the sequences generated by the MAP also converge with a linear rate.
It would be interesting either to find theorems that allow for this
conclusion or to at least obtain a partition of $A$ and $B$ so that
the results of \cite{one} are applicable to
the induced collections $(\wt{A},\wt{B})$.


\small

\section*{Acknowledgments}
HHB was partially supported by a Discovery Grant and an Accelerator
Supplement of the Natural Sciences and Engineering
Research Council of Canada and by the Canada Research Chair Program.
HMP was supported by a PIMS postdoctoral fellowship, University of British Columbia Okanagan internal grant, and University of Victoria internal grant.
XW was partially supported by a Discovery Grant of
the Natural Sciences and Engineering Research Council of Canada.

\end{document}